\documentclass[hidelinks,onefignum,onetabnum]{siamart251216}
\usepackage[utf8]{inputenc}
\usepackage[T1]{fontenc}
\usepackage{amssymb,mathtools}
\usepackage{stmaryrd}
\SetSymbolFont{stmry}{bold}{U}{stmry}{m}{n}
\usepackage{bm}
\usepackage{mathrsfs}
\usepackage{graphicx}
\usepackage{booktabs}
\usepackage{enumitem}
\usepackage{xcolor}
\hypersetup{colorlinks=true,linkcolor=blue,citecolor=blue,urlcolor=blue}
\usepackage{tikz}
\usepackage{tikz-cd}
\usetikzlibrary{calc,arrows.meta,decorations.pathreplacing}
\usepackage{tikz-3dplot}
\usetikzlibrary{calc}

\usepackage{tcolorbox}
\tcbuselibrary{breakable,skins}
\definecolor{errorrevisionyellow}{RGB}{255,246,170}
\newtcolorbox{errorrevision}{enhanced,breakable,
  colback=errorrevisionyellow,colframe=errorrevisionyellow,
  boxrule=0pt,arc=0pt,outer arc=0pt,
  left=2pt,right=2pt,top=2pt,bottom=2pt,boxsep=0pt,
  before skip=5pt,after skip=5pt,pad at break*=1pt}

\newsiamthm{assumption}{Assumption}
\newsiamthm{problem}{Problem}
\newsiamremark{remark}{Remark}

\newcommand{\Fht}{  \Phi_{\tilde{T}}  }
\newcommand{\Fhk}{  \Phi_T  }
\newcommand{\Fhr}{  \Phi_{\check{T}}  }
\newcommand{\Ftk}{  \Psi_T  }

\newcommand{\Fkr}{  \Theta_{\check{T}} }

\newcommand{\Ftkh}{  \Psi_h  }
\newcommand{\Fkrh}{  \Theta_h }

\newcommand{\FOhkpm}{  \Phi_{T^\pm}  }

\newcommand{\FOhrpm}{ \Phi_{\rT^\pm} }

\newcommand{\Fhkpm}{  {\Phi_{T}^\pm}  }

\newcommand{\Fhrpm}{ {\Phi_{\rT}^\pm} }
\newcommand{\FFhkp}{\phi_{F}^+}
\newcommand{\FFhkm}{\phi_{F}^-}
\newcommand{\FFhkpm}{\phi_{F}^\pm}
\newcommand{\FFhrp}{ \phi_{\rF}^+}
\newcommand{\FFhrm}{ \phi_{\rF}^-}
\newcommand{\FFhrpm}{ \phi_{\rF}^\pm}

\newcommand{\Phks}{ \mathcal{P}_{\Fhk}^\Sigma }
\newcommand{\Phkw}{ \mathcal{P}_{\Fhk}^W }
\newcommand{\Phkv}{ \mathcal{P}_{\Fhk}^V }
\newcommand{\Phkq}{ \mathcal{P}_{\Fhk}^Q }
\newcommand{\Phtq}{ \mathcal{P}_{\Fht}^Q }

\newcommand{\Ptkv}{ \mathcal{P}_{\Ftk}^V }
\newcommand{\Ptkq}{ \mathcal{P}_{\Ftk}^Q }

\newcommand{\Phtv}{ \mathcal{P}_{\Fht}^V }

\newcommand{\Phkxpm}{ \mathcal{P}_{\Fhkpm}^X}

\newcommand{\PFhkxp}{ \mathcal{P}_{\FFhkp}^{X} }
\newcommand{\PFhkxm}{ \mathcal{P}_{\FFhkm}^{X} }
\newcommand{\PFhkxpm}{ \mathcal{P}_{\FFhkpm}^{X} }

\newcommand{\Rhkw}{ \mathcal{R}_{\Fhk}^W }
\newcommand{\Rhkv}{ \mathcal{R}_{\Fhk}^V }
\newcommand{\Rhkq}{ \mathcal{R}_{\Fhk}^Q }
\newcommand{\Rhrs}{ \mathcal{R}_{\Fhr}^\Sigma}
\newcommand{\Rhrw}{ \mathcal{R}_{\Fhr}^W }

\newcommand{\Rtkv}{ \mathcal{R}_{\Ftk}^V}
\newcommand{\Rtkq}{ \mathcal{R}_{\Ftk}^Q }

\newcommand{\Rkrw}{ \mathcal{R}_{\Fkr}^W}

\newcommand{\Rhrxpm}{ \mathcal{R}_{\Fhrpm}^X}

\newcommand{\RFhrxp}{ \mathcal{R}_{\FFhrp}^{X}}
\newcommand{\RFhrxm}{ \mathcal{R}_{\FFhrm}^{X}}
\newcommand{\RFhrxpm}{ \mathcal{R}_{\FFhrpm}^{X}}

\newcommand{\FFhh}{ \gamma_F }

\newcommand{\PFhhx}{ \mathcal P_{\FFhh}^{X}}
\newcommand{\RFhhx}{ \mathcal R_{\FFhh}^{X}}
\newcommand{\RFhhs}{ \mathcal R_{\FFhh}^{\Sigma}}
\newcommand{\RFhhw}{ \mathcal R_{\FFhh}^{W}}
\newcommand{\rT}{\check{T}}
\newcommand{\rF}{\check{F}}

\newcommand{\pis}{\Pi^\Sigma_h}
\newcommand{\pist}{\Pi_T^\Sigma}
\newcommand{\hpis}{\Pi^\Sigma_{\hT}}

\newcommand{\pishF}{\mathcal{I}^\Sigma_{\hF}}
\newcommand{\pishFp}{\mathcal{I}^\Sigma_{\hF^+}}
\newcommand{\pishFm}{\mathcal{I}^\Sigma_{\hF^-}}

\newcommand{\piw}{\bPi^W_h}
\newcommand{\piwt}{\bPi_T^W}
\newcommand{\hpiw}{\bPi^W_{\hT}}

\newcommand{\piwhF}{\boldsymbol{\mathcal{I}}^W_{\hF}}
\newcommand{\piwhFp}{\boldsymbol{\mathcal{I}}^W_{\hF^+}}
\newcommand{\piwhFm}{\boldsymbol{\mathcal{I}}^W_{\hF^-}}

\newcommand{\pix}{\Pi^X_h}

\newcommand{\pixhT}{\Pi^X_{\hT}}
\newcommand{\pixtp}{\Pi^X_{T^+}}
\newcommand{\pixtm}{\Pi^X_{T^-}}
\newcommand{\pixtpm}{\Pi^X_{T^\pm}}

\newcommand{\pixhF}{\mathcal{I}^X_{\hF}}
\newcommand{\pixhFp}{\mathcal{I}^X_{\hF^+}}
\newcommand{\pixhFm}{\mathcal{I}^X_{\hF^-}}
\newcommand{\pixhFpm}{\mathcal{I}^X_{\hF^\pm}}

\newcommand{\bzero}{\bm{0}}
\newcommand{\jump}[1]{\llbracket #1 \rrbracket}
\newcommand{\aver}[1]{\{ #1 \}}

\newcommand{\Dtau}{D_\tau}

\newcommand{\bu}{\boldsymbol{u}}
\newcommand{\bH}{\boldsymbol{H}}
\newcommand{\bC}{\boldsymbol{C}}
\newcommand{\bv}{\boldsymbol{v}}
\newcommand{\bz}{\boldsymbol{z}}
\newcommand{\br}{\boldsymbol{r}}
\newcommand{\bV}{\boldsymbol{V}}

\newcommand{\bx}{x}

\newcommand{\bP}{\boldsymbol{P}}

\newcommand{\bw}{\boldsymbol{w}}
\newcommand{\btau}{\boldsymbol{\tau}}
\newcommand{\bt}{\boldsymbol{t}}

\newcommand{\bn}{\boldsymbol{n}}
\newcommand{\hbn}{\hat{\bn}}
\newcommand{\bL}{\boldsymbol{L}}
\newcommand{\bZ}{\boldsymbol{Z}}

\newcommand{\bft}{\boldsymbol{f}}

\newcommand{\bg}{\boldsymbol{g}}
\newcommand{\bb}{\boldsymbol{b}}

\newcommand{\bI}{\boldsymbol{I}}
\newcommand{\bE}{\boldsymbol{E}}
\newcommand{\bW}{\boldsymbol{W}}

\newcommand{\bIT}{\bI_T}

\newcommand{\dx}{\,\rd x}
\newcommand{\hdx}{\,\rd \hat{x}}

\newcommand{\ds}{\,\rd s}
\newcommand{\hds}{\,\rd\hat{s}}
\newcommand{\tds}{\,\rd\tilde{s}}
\newcommand{\curl}{\operatorname{curl}}
\newcommand{\Div}{\operatorname{div}}
\newcommand{\rot}{\operatorname{rot}}
\newcommand{\grad}{\operatorname{grad}}

\newcommand{\tDiv}{\tilde{\operatorname{div}}}

\newcommand{\Eh}{\boldsymbol{E}_h}

\newcommand{\Sh}{\boldsymbol{S}_h}
\newcommand{\ST}{\boldsymbol{S}_T}
\newcommand{\Gh}{\Upsilon_h}

\newcommand{\bPi}{\boldsymbol{\Pi}}

\newcommand{\calTh}{\mathcal{T}_h}

\newcommand{\calFh}{\mathcal{F}_h}

\newcommand{\calFhI}{\mathcal{F}_h^i}
\newcommand{\calFhIc}{\mathcal{F}_h^{i}}

\newcommand{\NT}{\mathcal{N}(T)}

\newcommand{\tx}{\tilde{x}}
\newcommand{\ta}{\tilde{a}}
\newcommand{\tT}{\tilde{T}}
\newcommand{\tF}{\tilde{F}}
\newcommand{\tq}{\tilde{q}}
\newcommand{\tQ}{\tilde{Q}}
\newcommand{\tY}{\tilde{Y}}
\newcommand{\tbn}{\tilde{\bn}}
\newcommand{\tbv}{\tilde{\bv}}
\newcommand{\tbw}{\tilde{\bw}}
\newcommand{\tbV}{\tilde{\bV}}

\newcommand{\tOmega}{\tilde{\Omega}}
\newcommand{\tcalTh}{\tilde{\mathcal{T}}_h}

\newcommand{\tNT}{\mathcal{N}(\tT)}

\newcommand{\ha}{\hat{a}}
\newcommand{\hx}{\hat{x}}
\newcommand{\hT}{\hat{T}}
\newcommand{\hF}{\hat{F}}
\newcommand{\hK}{\hat{K}}
\newcommand{\hq}{\hat{q}}
\newcommand{\hQ}{\hat{Q}}
\newcommand{\hbv}{\hat{\bv}}

\newcommand{\hbV}{\hat{\bV}}

\newcommand{\hNT}{\mathcal{N}(\hT)}

\newcommand{\hbw}{\hat{\bw}}
\newcommand{\hbW}{\hat{\bW}}
\newcommand{\hnab}{\hat{\nabla}}

\newcommand{\rd}{\mathrm{d}}
\newcommand{\trans}{\mathsf{T}}

\newcommand{\TrhF}{\operatorname{tr}_{\hat F}}

\newcommand{\TrxF}{\operatorname{tr}^{X}_{F}}
\newcommand{\TrxrF}{\operatorname{tr}^{X}_{\rF}}
\newcommand{\TrxhF}{\operatorname{tr}^{X}_{\hF}}

\newcommand{\TrxhFpm}{\operatorname{tr}^{X}_{\hF^\pm}}
\newcommand{\TrsFz}{\operatorname{tr}^{\Sigma}_{F_0}}

\newcommand{\TrxFz}{\operatorname{tr}^{X}_{F_0}}
\newcommand{\TrxFo}{\operatorname{tr}^{X}_{F_1}}
\newcommand{\TrxFt}{\operatorname{tr}^{X}_{F_2}}
\newcommand{\TrxFi}{\operatorname{tr}^{X}_{F_i}}
\newcommand{\TrsF}{\operatorname{tr}^{\Sigma}_{F}}
\newcommand{\TrshF}{\operatorname{tr}^{\Sigma}_{\hF}}

\newcommand{\TrwF}{\operatorname{tr}^{W}_{F}}

\newcommand{\TrwFo}{\operatorname{tr}^{W}_{F_1}}

\newcommand{\TrwhF}{\operatorname{tr}^{W}_{\hF}}

\headers{Pressure-Robust Finite Elements for Stokes}{Wei Chen and Zhen Liu}

\title{
Pressure-Robust Finite Elements for the Stokes Problem
on Three-Dimensional Curved Domains
}

\author{Wei Chen\thanks{LMAM and School of Mathematical Sciences, Peking University, Beijing 100871, P. R. China;
Chongqing Research Institute of Big Data, Peking University, Chongqing 401329, P. R. China
(\email{2406397052@pku.edu.cn}).
The first author was supported by the China Postdoctoral Science Foundation, Grant No. 2025M783065.}
\and Zhen Liu\thanks{Corresponding author.
Institut für Mathematik, Friedrich-Schiller-Universität Jena, 07743 Jena, Germany;
Chongqing Research Institute of Big Data, Peking University, Chongqing 401329, P. R. China
(\email{z.liu@uni-jena.de}).
The second author was supported by the Sino-German (CSC-DAAD) Postdoc Scholarship Program, ID 202406010516.}}

\begin{document}
\maketitle

\begin{abstract}
This paper develops a divergence-free, inf-sup stable, optimally convergent and pressure-robust finite element method for the three-dimensional Stokes problem on curved domains. The geometry is approximated by an isoparametric tetrahedral mesh, while the velocity space is obtained from Scott--Vogelius spaces on an Alfeld split by the Piola transform. The discrete inf-sup condition is proved using suitable face bubble functions. The insufficient accuracy of quadrature rules on curved triangular interfaces leads to a consistency error that may cause suboptimal convergence. The remedy is to introduce a suitable consistency correction without stabilization terms. Moreover, commuting operators on curved domains are constructed from local commuting interpolants and shown to be globally conforming. The operators are used to approximate the load and obtain a pressure-robust discretization. Numerical examples are provided to validate the theoretical results.
\end{abstract}

\begin{keywords}
Stokes equations, isoparametric elements, divergence-free, pressure-robust, curved domains.
\end{keywords}

\begin{MSCcodes}
65N30, 65N12, 76M10
\end{MSCcodes}

\section{Introduction}
\label{sec:introduction}

Let $\Omega \subset \mathbb{R}^3$ be a bounded domain with sufficiently smooth boundary $\partial\Omega$, and let $\bV:=\bH_0^1(\Omega)$ and $Q:=L_0^2(\Omega)$. 
The weak formulation of the Stokes equation reads: Given $\bft \in \bL^2(\Omega)$, find $(\bu,p)\in\bV\times Q$ such that
\begin{equation}
\label{eq:continuous-mixed}
\begin{aligned}
\nu(\nabla\bu,\nabla\bv)_\Omega-(p,\Div\bv)_\Omega
    &=(\bft,\bv)_\Omega &&\text{ for all } \bv\in\bV,\\
(\Div\bu,q)_\Omega&=0 &&\text{ for all } q\in Q,
\end{aligned}
\end{equation}
where $\nu>0$ denotes the kinematic viscosity, $\bu$ the fluid velocity, $p$ the pressure, $\bft$ the body force and $(\bullet,\bullet)_\Omega$ the $L^2$ inner product in $\Omega$. 

Finite element methods that produce divergence-free velocity approximations on polyhedral domains have several attractive properties. They preserve mass conservation at the discrete level and naturally decouple the velocity error from the pressure approximation. This decoupling, known as pressure robustness, is particularly important for flows with large pressure gradients or small viscosity. Consequently, the development of divergence-free and pressure-robust finite element methods has become an active area of research; see, for example, \cite{john2017on,scott1985norm,guzman2014conforming3D,guzman2018inf,fabien2022low,zhang2005a,beirao2017divergence} and the references therein. The situation is more delicate on curved domains. When high-order finite element spaces are used on a straight-sided approximation of the curved domain, geometric errors may limit the overall accuracy. Isoparametric methods mitigate this difficulty by employing polynomial mappings to construct curved computational elements. However, a standard isoparametric transformation by composition generally fails to preserve the divergence-free and pressure-robust properties. Neilan and Otus \cite{neilan2021divergence} addressed this difficulty in two dimensions by defining the velocity space through the Piola transform. Their construction is based on the lowest-order Scott--Vogelius pair on Clough--Tocher splits and yields an inf-sup stable, divergence-free, and optimally convergent method on curved domains. The resulting velocity space is globally $H(\Div)$-conforming and possesses suitable weak continuity properties, although it is generally not $H^{1}$-conforming near the curved boundary. This framework has subsequently been extended to boundary-correction method \cite{liu2023a}, $H^{1}$-conforming construction \cite{neilan2023a}, arbitrary polynomial degree \cite{durst2024a}, and nonconforming Fortin--Soulie method \cite{chen2026pressure}.
Based on an interior-penalty discontinuous Galerkin (IPDG) formulation \cite{arnold2001unified,schotzau2002mixed}, a scheme employing Brezzi--Douglas--Marini velocity spaces was proposed by Li et al. \cite{li2025divergence} for three-dimensional curved domains. The resulting method produces divergence-free velocity approximations and achieves high-order optimal convergence.

This paper extends the two-dimensional method in \cite{neilan2021divergence} to construct a divergence-free, uniformly inf-sup stable, optimally convergent, and pressure-robust element for three dimensions. The construction starts from the inf-sup stable Scott--Vogelius pair with the polynomial degree $k\geq3$ on the Alfeld split of a reference tetrahedron \cite{guzman2018inf}. The velocity space is obtained from the Scott--Vogelius space on an Alfeld-split reference tetrahedron by the Piola transform, whereas the pressure space is mapped by composition. The global velocity space is assembled by matching the associated Lagrange degrees of freedom across common macroelement faces. Although the construction follows from \cite{neilan2021divergence}, there are indeed some difficulties that need to be addressed specifically in the three-dimensional case.

Unlike the two-dimensional case, interior faces of a curved tetrahedral mesh need not be planar, and the normal traces of the discrete velocity on these faces are generally nonpolynomial. The arguments for $H(\operatorname{div})$-conformity and inf-sup stability in \cite[Theorem~4.2 and Lemma~4.3]{neilan2021divergence} do not apply directly. For $H(\operatorname{div})$-conformity, the required polynomial structure is recovered by pulling back the normal trace and scaling it by the surface Jacobian. To establish inf-sup stability, specialized face bubble functions are constructed on the curved interior faces to control the piecewise constant pressure component. Together with the local inf-sup stability of the Scott--Vogelius pair, this establishes a discrete inf-sup condition that is uniform with respect to the mesh size for sufficiently fine meshes.

The lack of sufficient weak continuity of the discrete velocity space may cause suboptimal convergence. In two dimensions, the Gauss--Lobatto interpolation nodes on common edges provide sufficient quadrature accuracy to control the interface consistency error \cite{durst2024a}. No analogous quadrature rule based on the Lagrange interpolation nodes is available on triangular faces in three dimensions \cite{xu2011on}. Consequently, the direct three-dimensional extension of the method in \cite{neilan2021divergence} yields suboptimal convergence rates. The IPDG method addresses this issue by introducing consistency terms, but also requires a penalty term involving a sufficiently large stabilization parameter to ensure coercivity \cite{li2025divergence}. In contrast, this paper proposes the following velocity bilinear form without penalty terms:
$$
a_h(\bu_h,\bv_h) =\sum_{T\in\calTh}(\nabla\bu_h,\nabla\bv_h)_T - \sum_{F \in \calFhI} \int_F \aver{\partial_{\bn_F}\bu_h}  \cdot \jump{\bv_h} +  \aver{\partial_{\bn_F}\bv_h}  \cdot \jump{\bu_h}\rd s,
$$
where $\calTh$ is a curved macro-tetrahedral mesh, $\calFhI$ denotes the set of interior faces, $\bn_F$ is a fixed unit normal to $F$, and $\jump{\bullet}$ and $\aver{\bullet}$ denote the jump and average of the traces across $F$, respectively.
Since the discrete velocity functions are continuous at the Lagrange nodes on each interior face, one can estimate the scaled jump term by the broken $H^1$-seminorm with an additional factor of the mesh size $h$. This, combined with an inverse trace inequality, proves that $a_h(\bullet,\bullet)$ is coercive for sufficiently small $h$. 

Since the computational domain $\Omega_h$ does not coincide with the physical domain $\Omega$, another challenge is to construct a computable load $\bft_h$ on $\Omega_h$ such that the resulting scheme is pressure-robust.  Neilan and Otus \cite[Corollary 6.2]{neilan2021divergence} showed that this can be done by applying a commuting operator to the source term $\bft$ defined on $\Omega$. This construction is carried out directly on curved elements in two dimensions, and its extension to three dimensions and high-order cases is not straightforward. In a unified way, this paper develops commuting operators in three dimensions for all admissible polynomial degrees. On each element, the local operator is defined as the composition of three operators: the pullback from the physical element to the reference element, the interpolation operator defined on the reference Alfeld tetrahedron introduced in \cite{fu2020exact}, and the covariant pushforward from the reference element to the curved computational element. The local commuting property follows from the chain rule and the commuting result on the reference element \cite[Theorem 4.20]{fu2020exact}. An additional issue is the global conformity of the interpolated functions on curved domains. This is established through trace compatibility of the local interpolation operators and their invariance under the corresponding face transformations. The resulting construction of commuting operators on curved domains is not specific to the present setting and provides a general principle that extends to other reference commuting pairs.

The rest of the paper is organized as follows: Section~\ref{sec:preliminaries} introduces the curved triangulation, the maps and transformations. Section~\ref{sec:global-spaces} constructs the local and global curved Scott--Vogelius spaces and proves their $H(\Div)$-conformity and inf-sup stability. Section~\ref{sec:discrete-method} presents the discrete method and establishes the well-posedness. The pressure robustness is developed in Section~\ref{sec:pressure-robust}. Numerical examples are provided in Section~\ref{sec:numerics}.

\section{Preliminaries}
\label{sec:preliminaries}

\subsection{Notation}
Given a bounded domain $G$ with the boundary $\partial G$, the usual Sobolev spaces $W^{m,p}(G)$ with norm $\| \bullet\|_{W^{m,p}(G)}$ and semi-norm $|\bullet |_{W^{m,p}(G)}$ are used~\cite{adams2003sobolev}. Let $H^m(G) = W^{m,2}(G)$ and $L^p(G) = W^{0,p}(G)$. Furthermore, $H_0^1(G)$ is the subspace of $H^1(G)$ consisting of functions with vanishing traces on $\partial G$, and $L^2_0(G)$ denotes the space of $L^2(G)$ functions with zero mean over $G$. Denote the space of $m$-times continuously differentiable functions on $G$ by $C^m(G)$ and the space of all polynomials on $G$ with degree less than or equal to $k$ by $P_k(G)$. Corresponding vector-valued functions and spaces are denoted by boldface letters, e.g., $\bv \in \bP_k(G)$. Let $(\bullet, \bullet)_G$ denote the $L^2$ inner product on $G$ between scalars, vectors, or tensors.

Throughout this paper, the notation $A \lesssim B$ denotes $A\le C B$, where $C>0$ is a constant independent of the mesh size $h$ and the kinematic viscosity $\nu$. Similarly, $A \approx B$ signifies $A\lesssim B$ and $B\lesssim A$. The differential operator $\nabla$ is understood elementwise with respect to meshes. For a regular mapping $\Phi:~\mathbb{R}^3 \rightarrow \mathbb{R}^3$, denote its Jacobian by $D\Phi$ and set $J_\Phi = \det D\Phi$. 

\subsection{Curved meshes and mappings}
Throughout the paper, assume that $\Omega\subset\mathbb{R}^3$ is a bounded domain whose boundary $\partial\Omega$ is sufficiently smooth and can be represented by a finite number of local charts. The polynomial degree $k \geq 3$ is fixed throughout.
Following the standard isoparametric framework (see, e.g., \cite{lenoir1986optimal,bernardi1989optimal,brenner2008the}), this section constructs a curved tetrahedral mesh $\calTh$ of degree $k$.

The construction begins with a shape-regular affine tetrahedral mesh $\tcalTh$ whose interior vertices lie in $\Omega$ and boundary vertices lie on $\partial\Omega$. Assume that each $\tT\in\tcalTh$ has at most three vertices on $\partial\Omega$.
The polyhedral domain $\tOmega_h\coloneqq{\rm int} \left(\cup_{\tT \in \tcalTh} \overline{\tT}\right)$ provides an $\mathcal{O}(h^2)$ approximation of~$\Omega$. Here $h\coloneqq\max_{\tT\in\tcalTh}\operatorname{diam}(\tT)$.
Let $\hT$ be the reference tetrahedron with vertices $(0,0,0)^\trans$, $(1,0,0)^{\trans}$, $(0,1,0)^\trans$ and $(0,0,1)^\trans$, and let $\Fht:~\hT\to \tT$ be an affine bijection.

For $\tT\in\tcalTh$ with a boundary edge or face, construct a polynomial map $\Fhk:\hT\to\mathbb{R}^3$ recursively from the affine map~$\Fht$ as in \cite[Equation~(22)]{lenoir1986optimal}, which is of degree at most $k$ and maps $\hT$ onto a curvilinear tetrahedron~$T$. For $\tT\in\tcalTh$ with no boundary edge or face, set $\Fhk\coloneqq\Fht$ and $T\coloneqq\tT$.
By \cite[Theorems~1 and 2]{lenoir1986optimal}, the maps $\Fhk$ and $\Fhk^{-1}$ satisfy the following estimates
\begin{subequations}
	\label{Prop-FT}
	\begin{align}
		& |\Fhk|_{W^{m,\infty}(\hat{T})} \lesssim h_T^m  \text{ and } |\Fhk^{-1}|_{W^{m,\infty}(T)} \lesssim h_T^{-m},\quad 1 \le m\le k+1, \label{Prop-FT:eq1}\\
		&J_{\Fhk}(\hx) \approx  h_T^3 \text{ for all } \hx\in\hT,  \label{Prop-FT:eq2}
	\end{align}
\end{subequations}
where $h_T:=\operatorname{diam}(\Ftk^{-1}(T))$ and $\Ftk\coloneqq \Fhk\circ \Fht^{-1}:\tT\to T$. Since $\Fht$ is the linear nodal interpolant of $\Fhk$, the interpolation estimate \cite[Theorem 4.4.4]{brenner2008the} and \eqref{Prop-FT:eq1} give $\|\Fhk-\Fht\|_{W^{2,\infty}(\hT)}\lesssim h_T^2$. Scaling and the chain rule then yield
\begin{subequations}
\label{eq:Prop-Ftk}
\begin{align}
&|\Ftk-\operatorname{id}_{\tT}|_{W^{m,\infty}(\tT)}+|\Ftk^{-1}-\operatorname{id}_{T}|_{W^{m,\infty}(T)}\lesssim h_T^{2-m},\quad 0 \leq m \leq 2,\label{eq:Prop-Ftk-eq1}\\
&|J_{\Ftk}(\tx)-1|\lesssim h_T~\text{for all}~\tx\in \tT.\label{eq:Prop-Ftk-eq2}
\end{align}
\end{subequations}
Let $\calTh:= \{\Ftk(\tT):\tT\in\tcalTh\}$ and $\Omega_h\coloneqq{\rm int}\Big(\cup_{ T\in  \calTh} \overline{ T}\Big)$ be the isoparametric tetrahedral mesh and computational domain, respectively. By compatibility results in \cite[Lemmas~2 and 3]{lenoir1986optimal}, define $\Ftkh:\tOmega_h\to\Omega_h$ satisfying $\Ftkh|_{\tT}=\Ftk$. 

For each $T\in\calTh$, define a map $\Fkr:T\to\mathbb{R}^3$ which maps $T$ diffeomorphically onto a curvilinear tetrahedron $\rT$ exactly fitting $\Omega$. If $T$ has a boundary edge or face, $\Fkr$ is defined by \cite[Equation~(32)]{lenoir1986optimal}; otherwise, $\Fkr$ is the identity map on $T$. It follows from \cite[Propositions~2 and 3]{lenoir1986optimal} that
\begin{subequations}
\label{eq:psi-estimates}
\begin{align}
&|\Fkr-\operatorname{id}_T|_{W^{m,\infty}(T)}
+|\Fkr^{-1}-\operatorname{id}_{\rT}|_{W^{m,\infty}(\rT)}
\lesssim h_T^{k+1-m},\quad0\leq m\leq k+1, \label{eq:psi-estimates-1}\\
& | J_{\Fkr}(x)-1|\lesssim  h_T^k~\text{for all}~x\in T. \label{eq:psi-estimates-2}
\end{align}
\end{subequations}
The curvilinear tetrahedral mesh $\check{\calTh}:=\{\Fkr(T): T\in\calTh\}$ forms an exact partition of $\Omega$, and the compatibility of the local maps $\Fkr$ implies that they can be pieced together into a global mapping $\Theta_h:\Omega_h\to\Omega$.
Define $\Fhr\coloneqq\Fkr\circ \Fhk: \hT \to \rT$. The chain rule, together with \eqref{eq:psi-estimates} and \eqref{Prop-FT}, yields
\begin{subequations}
	\label{eq:theta-estimates}
	\begin{align}
		&|\Fhr|_{W^{m,\infty}(\hT)}\lesssim h_T^m \text{ and } |\Fhr^{-1}|_{W^{m,\infty}(\rT)} \lesssim h_T^{-m},\quad 1\le m\le k+1, \label{eq:theta-estimates-1}\\
		& J_{\Fhr}(\hx) \approx  h_T^3 \text{ for all } \hx\in\hT.\label{eq:theta-estimates-2}
	\end{align}
\end{subequations}

\subsection{Alfeld split}
To ensure the inf-sup stability of the proposed divergence-free method, the Alfeld split~\cite{schenck2014splines,guzman2018inf,fu2020exact} is introduced on each element of $\calTh$.
Let $\hT^{r}:= \{\hat K_i \}_{i=1}^4$ denote the Alfeld split of $\hT$, obtained by connecting its four vertices with its barycenter. For $\tT\in\tcalTh$ and $T\in\calTh$, define the corresponding local refinements~by
$$
\tT^r:=\{\Fht(\hK):\,\hK\in\hT^r\},\qquad T^r:=\{\Fhk(\hK):\,\hK\in\hT^r\}.
$$
Define the refined isoparametric mesh $\calTh^r:=\{K:K\in T^r, T\in\calTh\}$. The finite element spaces introduced in subsequent sections are defined on $\calTh$ rather than on $\calTh^r$; see \cite[Remark 2.2]{neilan2021divergence} for more details.

\subsection{Surface mappings}\label{subsec:facemapping}
Unlike the two-dimensional curved triangulations in \cite{neilan2021divergence}, where the common edge of two adjacent macroelements is always straight, the common face of two adjacent macroelements in three dimensions may be curved, as illustrated in Figure~\ref{fig:curved-interface-2D-3D}. To address the resulting analytical challenges, the surface mappings and associated geometric quantities are introduced below.

Let $\calFh$ and $\calFhI$ denote the sets of all and interior faces of $\calTh$, respectively. For $F\in\calFhI$ shared by $T^\pm\in\calTh$, define the face patch $\omega_F:= \{ T^+, T^-\}$ and let $\bn_F$ be the unit normal field pointing from $T^+$ to $T^-$. For a piecewise smooth function $\bv$, set $\bv^\pm:=\bv|_{T^\pm}$ and define its jump and average across $F$ by $\jump{\bv} := \bv^+|_F-\bv^-|_F$ and $\aver{\bv} := \frac12(\bv^+|_F+\bv^-|_F)$.
Similarly, define $\aver{\partial_{\bn_F}\bv}:=\frac12 \left(\partial_{\bn_F}\bv^+|_F +\partial_{\bn_F}\bv^-|_F\right)$ with $\partial_{\bn_F}\bv^\pm:=\nabla\bv^\pm\bn_F$. 
For $F\in\calFh\setminus\calFhI$, let $\bn_F$ be the outward unit normal field to $\Omega_h$. Then the jump $\jump{\bv}$ and average $\aver{\bv}$ reduce to $\bv|_F$, and $\aver{\partial_{\bn_F}\bv}:=\partial_{\bn_F}\bv|_F$. Define $h_F:=\operatorname{diam}(\Ftkh^{-1}(F))$.

Let $\bI$ be the identity matrix of $\mathbb{R}^{3\times3}$, and write $a\otimes b:=a b^\trans$ for $a,b\in\mathbb{R}^3$. Given a face $F$, define the matrix-valued function $\bP_F:=\bI-\bn_F\otimes\bn_F$. The tangential gradient of a smooth scalar function $g$ on $F$ is defined by $\nabla_F g:=\bP_F(\nabla g^e)|_F$, where $g^e$ is any smooth extension of $g$ to a neighborhood of $F$.
This definition is independent of the choice of the extension $g^e$; see \cite[Lemma~2.4]{dziuk2013finite}. For a tangential field $\btau$ on $F$, set $\partial_{\btau}g:=\btau\cdot\nabla_{F}g$. For a smooth vector-valued function $\bg=(g_1,g_2,g_3)^\trans$ on $F$, define $\nabla_F\bg:=(\nabla_F g_1,\nabla_F g_2,\nabla_F g_3)^\trans$.

Let the face mapping $\phi: F_0\to F_1$ be induced by an element mapping $\Phi:T_0\to T_1$ with $F_i$ a face of $T_i$, $i=0,1$, $\Phi(F_0)=F_1$, and $\phi=\Phi|_{F_0}$. In this paper, the lowercase letters of the corresponding element mapping denote the face mappings. Define its tangential Jacobian $\Dtau\phi:=\nabla_{F_0}\phi$, which satisfies $\Dtau\phi=(D\Phi|_{F_0})\bP_{F_0}$. Define its surface Jacobian $\mu_\phi$ by
\begin{equation}\label{eq:def-surface-mu}
\int_{F_0}(g\circ\phi)\,\mu_{\phi}\,\ds=\int_{F_1}g\,\ds \qquad\text{for all}~g\in L^1(F_1).    
\end{equation}

\begin{figure}[htbp]
\centering

\begin{minipage}[b]{0.42\textwidth}
\centering
\begin{tikzpicture}[scale=2.8, line join=round, line cap=round]
  
\coordinate (A) at (0,0);
\coordinate (B) at (1,0);

\coordinate (C) at ({2/3},{1/2});
\coordinate (D) at ({2/3},{-1/2});

\coordinate (Zu) at (0.55,0.18);
\coordinate (Zd) at (0.55,-0.18);

\draw[thick] (A) -- (C);
\draw[ultra thick] (C) .. controls (0.82,0.40) and (0.95,0.18) .. (B);

\draw[thick] (A) -- (D);
\draw[ultra thick] (D) .. controls (0.82,-0.40) and (0.95,-0.18) .. (B);

\draw[thick] (A) -- (B);

\draw[densely dashed] (Zu) -- (A);
\draw[densely dashed] (Zu) -- (B);
\draw[densely dashed] (Zu) -- (C);

\draw[densely dashed] (Zd) -- (A);
\draw[densely dashed] (Zd) -- (B);
\draw[densely dashed] (Zd) -- (D);

\fill (Zu) circle (0.5pt);
\fill (Zd) circle (0.5pt);

\end{tikzpicture}
\end{minipage}
\hfill
\begin{minipage}[b]{0.52\textwidth}
\centering

\tdplotsetmaincoords{76}{34}

\begin{tikzpicture}[scale=2.0, tdplot_main_coords, line join=round, line cap=round]

\coordinate (A) at (0,0,0);
\coordinate (B) at (1,0.5,0);
\coordinate (C) at (1.0,-0.5,0);

\coordinate (P) at (0,0,2/3);
\coordinate (Q) at (0,0,-2/3);

\coordinate (Zp) at (0.50,0,0.18);
\coordinate (Zq) at (0.46,-0.08,-0.18);

\coordinate (Mbc) at (1.135,0,-0.015);

\fill[gray!18]
  (A) -- (B)
  .. controls (1.09,0.36,0.03) and (1.135,0.18,0.005) .. (Mbc)
  -- cycle;

\fill[gray!18]
  (A) -- (Mbc)
  .. controls (1.135,-0.18,-0.035) and (1.09,-0.36,-0.05) .. (C)
  -- cycle;

\draw[thick] (A) -- (B);
\draw[thick] (A) -- (C);
\draw[ultra thick] (B)
  .. controls (1.18,0.22,0.06) and (1.18,-0.22,-0.15) .. (C);

\draw[thick] (P) -- (A);
\draw[ultra thick] (P) .. controls (0.30,0.48,0.78) and (0.78,0.56,0.22) .. (B);
\draw[ultra thick] (P) .. controls (0.30,-0.48,0.78) and (0.78,-0.56,0.22) .. (C);

\draw[thick] (Q) -- (A);
\draw[ultra thick] (Q) .. controls (0.30,0.48,-0.78) and (0.78,0.56,-0.22) .. (B);
\draw[ultra thick]
  (Q) .. controls (0.14,-0.20,-0.66) and (0.76,-0.54,-0.22) .. (C);

\draw[densely dashed] (Zp) -- (A);
\draw[densely dashed] (Zp) -- (B);
\draw[densely dashed] (Zp) -- (C);
\draw[densely dashed] (Zp) -- (P);

\draw[densely dashed] (Zq) -- (A);
\draw[densely dashed] (Zq) -- (B);
\draw[densely dashed] (Zq) -- (C);
\draw[densely dashed] (Zq) -- (Q);

\fill (Zp) circle (0.5pt);
\fill (Zq) circle (0.5pt);

\end{tikzpicture}
\end{minipage}

\caption{
Adjacent curved macroelements in two and three dimensions.
Dashed lines indicate the macroelement subdivisions.
}
\label{fig:curved-interface-2D-3D}
\end{figure}

\subsection{Function transformations}
\label{subsec:transformations}
For a regular diffeomorphism $\Phi:~T_0 \rightarrow T_1$,  let $\mathcal{P}_\Phi$ denote the push-forward and $\mathcal{R}_\Phi:=\mathcal{P}_\Phi^{-1}$ the corresponding pullback. Superscripts will be used to distinguish the different transformations considered below.

Define the matrix-valued function $A_\Phi:= D\Phi  / J_\Phi$. Given vector functions $\bv_0$ on $T_0$ and $\bv_1$ on $T_1$, define the Piola transform and its inverse by
\begin{equation}
	\label{eq:Piola-V}
	\mathcal{P}_\Phi^V \bv_0 	:= (A_\Phi \bv_0)\circ \Phi^{-1}, \quad 
	\mathcal{R}_\Phi^V \bv_1	:= A_\Phi^{-1}(\bv_1 \circ \Phi).
\end{equation}
For scalar functions $q_0$ on $T_0$ and $q_1$ on $T_1$, define the composition transformations
\begin{equation}
	\label{eq:Piola-Q}
	\mathcal{P}_\Phi^Q q_0 := q_0 \circ \Phi^{-1},
	\qquad
	\mathcal{R}_\Phi^Q q_1	:= q_1 \circ \Phi .
\end{equation}
For another diffeomorphism $\Psi:T_1\to T_2$, these transformations are compatible with composition; namely
\begin{equation}\label{eq:pushforwcomposition}
\mathcal{P}_{\Psi\circ \Phi}^{X} = \mathcal{P}_{\Psi}^{X}\mathcal{P}_{\Phi}^{X}
\quad\text{and}\quad
\mathcal{R}_{\Psi\circ \Phi}^{X} = \mathcal{R}_{\Phi}^{X} \mathcal{R}_{\Psi}^{X}\quad\text{ for }X\in\{V,Q\}.
\end{equation}
For an element $T$, define $\bH(\Div;T):=\{\bv\in\bL^2(T):\,\Div\bv\in L^2(T)\}$.
\begin{lemma}\label{lem:pro-AT}
For $\bv_0\in \bH(\Div;T_0)$ and $q_0\in L^2(T_0)$, set $\bv_1:=\mathcal P_\Phi^V\bv_0$ and $q_1:=\mathcal P_\Phi^Qq_0$. Then $\bv_1\in \bH(\Div;T_1)$, $q_1\in L^2(T_1)$ and 
\begin{equation}
\label{eq:pro-AT}
\Div\bv_1\circ\Phi=\Div\bv_0/J_{\Phi},\qquad
(q_1, \Div \bv_1)_{T_1} = (q_0, \Div  \bv_0)_{T_0}.
\end{equation}
For a face $F_0$ of $T_0$, let $\phi=\Phi|_{F_0}$. For smooth function $\bv_0$ on $T_0$, the following normal-trace identity holds
\begin{equation}\label{eq:face-equation-AT}
\bv_0 \cdot \bn_0 = \mu_{\phi}(\bv_1\cdot\bn_1)\circ\phi\quad\text{ on }~F_0,
\end{equation}
where $\bn_i$ denotes the outward unit normal field to $T_i$, $i=0,1$.
\end{lemma}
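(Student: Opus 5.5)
The approach is the classical Piola argument: verify the divergence identity for smooth fields, extend it by density, and obtain the normal-trace identity from the divergence theorem with localized test functions.

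\emph{Step 1: pointwise divergence identity for smooth fields.} For smooth $\bv_0$, write $\bv_1\circ\Phi = D\Phi\,\bv_0/J_\Phi$ and apply the chain rule. The extra terms from differentiating $D\Phi/J_\Phi$ cancel by the Piola identity $\Div_{\hat x}(J_\Phi D\Phi^{-\trans})=0$, i.e.\ columnwise divergence-freeness of the cofactor matrix. This gives $(\Div\bv_1)\circ\Phi=\Div\bv_0/J_\Phi$.

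A cleaner route, which I would actually use, is weak. For $\psi_1\in C_c^\infty(T_1)$, set $\psi_0:=\psi_1\circ\Phi$. Since $\nabla\psi_1\circ\Phi=D\Phi^{-\trans}\nabla\psi_0$ and $\rd x_1=J_\Phi\,\rd x_0$ (assuming $J_\Phi>0$, which holds for the orientation-preserving maps used here), we get
\[
\int_{T_1}\bv_1\cdot\nabla\psi_1\,\rd x=\int_{T_0}\bv_0\cdot\nabla\psi_0\,\rd x .
\]
This identity involves no derivatives of $\bv_0$, so it holds for all $\bv_0\in\bL^2(T_0)$. For $\bv_0\in\bH(\Div;T_0)$, the right-hand side equals $-\int_{T_0}\Div\bv_0\,\psi_0\,\rd x$. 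Changing variables back shows that the distributional divergence of $\bv_1$ is $(\Div\bv_0/J_\Phi)\circ\Phi^{-1}$. This function lies in $L^2(T_1)$ because $J_\Phi$ is bounded above and below and $\Phi$ is bi-Lipschitz. The same bounds give $\bv_1\in\bL^2(T_1)$ and $q_1\in L^2(T_1)$.

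\emph{Step 2: the pressure identity.} This follows by a change of variables:
\[
(q_1,\Div\bv_1)_{T_1}=\int_{T_0}q_0\,(\Div\bv_0/J_\Phi)\,J_\Phi\,\rd x .
\]

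\emph{Step 3: the normal-trace identity.} Take smooth $\bv_0$ and test functions $\psi_1$ smooth on $\overline{T_1}$ that vanish near $\partial T_1\setminus F_1$. Integrate by parts on both sides of the weak identity from Step 1, now keeping the boundary terms:
\[
\int_{F_1}(\bv_1\cdot\bn_1)\psi_1\,\ds=\int_{F_0}(\bv_0\cdot\bn_0)\,\psi_0\,\ds .
\]
The volume terms match by Step 1. Rewrite the left side with the surface Jacobian definition \eqref{eq:def-surface-mu}:
\[
\int_{F_1}(\bv_1\cdot\bn_1)\psi_1\,\ds=\int_{F_0}\big((\bv_1\cdot\bn_1)\circ\phi\big)\,\psi_0\,\mu_\phi\,\ds .
\]
The traces $\psi_0|_{F_0}=\psi_1\circ\phi$ range over a dense set of $L^2(F_0)$ as $\psi_1$ varies. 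Since both integrands are continuous, the identity holds pointwise on $F_0$.

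The main obstacle is the density and localization in Step 3. One needs $\psi_1$ supported away from the other faces but with arbitrary trace on the interior of $F_1$, which is straightforward since $F_1$ is a smooth relatively open piece of $\partial T_1$. One must also justify the divergence theorem on curved Lipschitz $T_i$, which is standard.
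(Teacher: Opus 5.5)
Your proposal is correct and follows essentially the same route as the paper. The paper cites \cite[Lemma 2.1.6]{boffi2013mixed} for the divergence and pressure identities and invokes normal-flux preservation of the Piola transform; it then rewrites the face integral over $F_1$ via the surface Jacobian \eqref{eq:def-surface-mu} and concludes from the arbitrariness of the test function on $F_0$, exactly as in your Step~3. The only difference is that you derive the cited facts yourself from $\int_{T_1}\bv_1\cdot\nabla\psi_1\,\rd x=\int_{T_0}\bv_0\cdot\nabla\psi_0\,\rd x$, and in doing so you make explicit the assumption $J_\Phi>0$ that the paper leaves implicit (the pressure and normal-trace identities would change sign otherwise).
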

\begin{proof}
It follows from \cite[Lemma 2.1.6]{boffi2013mixed} that \eqref{eq:pro-AT} holds.
For a face $F_0$ of $T_0$, let $F_1:=\phi(F_0)\subset\partial T_1$. The normal-flux preservation of the Piola transform gives
\begin{equation*}
\int_{F_1}\bv_1 \cdot \bn_1 \, p_1 \ds=\int_{F_0}\bv_0 \cdot \bn_0\, p_0\ds \quad\text{ for all }p_0\in L^2(F_0),~~ p_1=p_0\circ\phi^{-1}.
\end{equation*}
Applying \eqref{eq:def-surface-mu} with $g=\bv_1 \cdot \bn_1 \, p_1$ to the left-hand side of the above equation gives
$$
\int_{F_0}\big(\mu_{\phi}(\bv_1 \cdot \bn_1)\circ\phi\big) \, p_0 \ds=\int_{F_1}\bv_1 \cdot \bn_1 \, p_1 \ds=\int_{F_0}\bv_0 \cdot \bn_0p_0\ds.
$$
This and the arbitrariness of $p_0$ prove \eqref{eq:face-equation-AT}.
\end{proof}

The following results give bounds on $A_\Phi$ and its inverse for $\Phi=\Fhk$, which follows from similar procedures as in \cite[Lemma 2.3]{neilan2021divergence} together with \eqref{Prop-FT}.
\begin{lemma}\label{lem:ATBounds}
Given $T\in\calTh$ and $\Fhk:\hT\to T$, it holds that
\begin{align*}
	|A_{\Fhk}|_{W^{m,\infty}(\hT)}\lesssim  h_T^{m-2},~m\geq0,\quad
	|A_{\Fhk}^{-1}|_{W^{m,\infty}(\hT)}
	\lesssim \left\{
	\begin{array}{ll}
		h_T^{2+m}, & 0\leq m\leq 2k-2,\\
		0, & 2k-1 \le m.
		\end{array}
	\right.
\end{align*}
\end{lemma}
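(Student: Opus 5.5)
Writing $A_{\Fhk}=D\Fhk/J_{\Fhk}$ and $A_{\Fhk}^{-1}=J_{\Fhk}(D\Fhk)^{-1}=\operatorname{adj}(D\Fhk)$, the plan is to treat the two quantities separately. The second bound is almost purely algebraic. The first follows from the Leibniz rule combined with a Faà di Bruno estimate for $1/J_{\Fhk}$, both driven by \eqref{Prop-FT}. The key observation is that the adjugate is a polynomial, which makes it the easier object.

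\textbf{Bound on $A_{\Fhk}^{-1}$.} Each entry of $\operatorname{adj}(D\Fhk)$ is a $2\times2$ minor of $D\Fhk$, i.e.\ a sum of products of two first derivatives of components of $\Fhk$. Since $\Fhk$ is a polynomial of degree at most $k$, each first derivative has degree at most $k-1$, so every minor is a polynomial of degree at most $2k-2$. Hence all derivatives of order $m\ge 2k-1$ vanish identically, which gives the second case. For $0\le m\le 2k-2$, apply the Leibniz rule to a product $\partial_i\Fhk^{a}\,\partial_j\Fhk^{b}$. A term carrying $r$ derivatives on the first factor and $m-r$ on the second is bounded by $|\Fhk|_{W^{r+1,\infty}}|\Fhk|_{W^{m-r+1,\infty}}\lesssim h_T^{r+1}h_T^{m-r+1}=h_T^{m+2}$, using \eqref{Prop-FT:eq1}. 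This requires $r+1\le k+1$ and $m-r+1\le k+1$. For larger orders the corresponding seminorm of the degree-$k$ polynomial $\Fhk$ is simply zero, so the estimate holds trivially.

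\textbf{Bound on $A_{\Fhk}$.} First, by the same Leibniz argument applied to the $3\times3$ determinant, $|J_{\Fhk}|_{W^{m,\infty}(\hT)}\lesssim h_T^{m+3}$. Next, combine this with the lower bound $J_{\Fhk}\gtrsim h_T^3$ from \eqref{Prop-FT:eq2}. Faà di Bruno's formula writes a derivative of order $m$ of $1/J$ as a sum of terms $J^{-1-\ell}\prod_{i=1}^{\ell}\partial^{\alpha_i}J$ with $\sum_i|\alpha_i|=m$. Each such term is bounded by $h_T^{-3-3\ell}\,h_T^{m+3\ell}=h_T^{m-3}$, so $|1/J_{\Fhk}|_{W^{m,\infty}}\lesssim h_T^{m-3}$. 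Finally, apply the Leibniz rule to $D\Fhk\cdot(1/J_{\Fhk})$, using $|D\Fhk|_{W^{r,\infty}}\lesssim h_T^{r+1}$. This gives $|A_{\Fhk}|_{W^{m,\infty}}\lesssim\sum_r h_T^{r+1}h_T^{m-r-3}=h_T^{m-2}$, which holds for every $m\ge0$.

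The main care point is the range of $m$ in \eqref{Prop-FT:eq1}, which is stated only for $1\le m\le k+1$. For higher orders the bounds above rely on $\Fhk$ being a polynomial of degree at most $k$, so its derivatives beyond order $k$ vanish. This is exactly what makes both the "$m\ge0$" range for $A_{\Fhk}$ and the cutoff at $2k-1$ for $A_{\Fhk}^{-1}$ consistent. Apart from this, the argument mirrors \cite[Lemma 2.3]{neilan2021divergence}, with $2\times2$ minors in place of scalar entries and the cubic scaling of the Jacobian.
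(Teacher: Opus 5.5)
Your proof is correct and follows essentially the route the paper indicates. The paper gives no separate argument; it defers to \cite[Lemma~2.3]{neilan2021divergence} together with \eqref{Prop-FT}, which amounts to your two steps: observing that $A_{\Fhk}^{-1}=\operatorname{adj}(D\Fhk)$ is a polynomial (here of degree $2k-2$, since its entries are $2\times 2$ minors) and applying the Leibniz rule, and bounding $A_{\Fhk}=D\Fhk/J_{\Fhk}$ via the product and chain rules with $J_{\Fhk}\approx h_T^3$. You also correctly supply the detail the citation leaves implicit, namely that seminorms of $\Fhk$ of order above $k$ vanish, which justifies the ranges of $m$.
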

Additionally, the following scaling results from \cite[Lemma 2.3]{bernardi1989optimal} will be used.

\begin{lemma}
\label{lem::NormInThT}
Let $T\in\calTh$ and $\bv\in\bW^{m,p}(K)$ for any $K\in T^r$, with $0\le m\le k+1$ and $1\le p\le+\infty$. Set $\hbv=\bv\circ\Phi_T$ and $\hK=\Fhk^{-1}(K)$, then it holds that
\begin{align*}
|\bv|_{W^{m,p}(K)}\lesssim  h_T^{3/p-m}  \|\hbv\|_{W^{m,p}(\hK)},
\quad
|\hbv|_{W^{m,p}(\hK)} \lesssim  h_T^{m-3/p} \|\bv\|_{W^{m,p}(K)}.
\end{align*}
\end{lemma}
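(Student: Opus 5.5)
The plan is to reduce the estimate to the Faà di Bruno formula (the multivariate chain rule for higher derivatives) combined with the bounds \eqref{Prop-FT}. I will prove the first inequality directly and obtain the second by exchanging the roles of $\Fhk$ and $\Fhk^{-1}$. Write $\bv=\hbv\circ\Fhk^{-1}$ on $K$.

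For $|\alpha|=m\ge1$, the Faà di Bruno formula expresses $D^\alpha\bv$ as a sum of terms. Each term has the form $(D^\ell\hbv\circ\Fhk^{-1})$, with $1\le\ell\le m$, applied multilinearly to derivatives $D^{\beta_1}\Fhk^{-1},\dots,D^{\beta_\ell}\Fhk^{-1}$, where $|\beta_1|+\dots+|\beta_\ell|=m$ and $|\beta_j|\ge1$. By \eqref{Prop-FT:eq1}, $|D^{\beta_j}\Fhk^{-1}|\lesssim h_T^{-|\beta_j|}$, so every product of the inverse-map derivatives is bounded by $h_T^{-m}$. This uniform factor is what makes the curved case behave like the affine one: the higher derivatives of $\Fhk^{-1}$ do not worsen the power of $h_T$, and they only bring in lower-order derivatives of $\hbv$. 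It then follows pointwise on $K$ that
\[
|D^m\bv|\lesssim h_T^{-m}\sum_{\ell=1}^{m}|D^\ell\hbv|\circ\Fhk^{-1}.
\]

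Next I take the $L^p(K)$ norm and change variables $x=\Fhk(\hx)$. Using $J_{\Fhk}\approx h_T^3$ from \eqref{Prop-FT:eq2}, this gives $\|g\circ\Fhk^{-1}\|_{L^p(K)}\lesssim h_T^{3/p}\|g\|_{L^p(\hK)}$, with the obvious modification for $p=\infty$. Combining this with the pointwise bound yields $|\bv|_{W^{m,p}(K)}\lesssim h_T^{3/p-m}\|\hbv\|_{W^{m,p}(\hK)}$. The full norm is needed on the right precisely because of the lower-order terms $\ell<m$. The case $m=0$ is just the change of variables.

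For the reverse inequality I apply the same argument to $\hbv=\bv\circ\Fhk$. Now $|D^{\beta}\Fhk|\lesssim h_T^{|\beta|}$ for $1\le|\beta|\le k+1$, so each term carries a factor $h_T^{m}$ times $|D^\ell\bv|\circ\Fhk$. The change of variables contributes $h_T^{-3/p}$, and therefore
\[
|\hbv|_{W^{m,p}(\hK)}\lesssim h_T^{m-3/p}\sum_{\ell\le m}\|D^\ell\bv\|_{L^p(K)}\le h_T^{m-3/p}\|\bv\|_{W^{m,p}(K)}.
\]

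The main point requiring care is smoothness. The argument needs $\Fhk$ to be smooth on each $\hK$, which holds because $\Fhk$ is a polynomial. It also needs the derivative bounds up to order $m\le k+1$, and these are exactly the range $1\le m\le k+1$ covered by \eqref{Prop-FT:eq1}; this is why the lemma restricts $m\le k+1$. For Sobolev (rather than smooth) $\bv$, I would first prove the estimates for smooth functions and then conclude by density. This is the argument of \cite[Lemma 2.3]{bernardi1989optimal}.
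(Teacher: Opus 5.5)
Your proposal is correct and is the standard argument behind \cite[Lemma~2.3]{bernardi1989optimal}, which the paper simply cites without proof. That argument is the Fa\`a di Bruno formula combined with the derivative bounds \eqref{Prop-FT:eq1}, where every derivative order needed is at most $m\le k+1$, plus the change of variables with $J_{\Fhk}\approx h_T^3$ from \eqref{Prop-FT:eq2}. The one small repair is the case $p=\infty$: smooth functions are not dense in $W^{m,\infty}(K)$, so justify the a.e.\ chain rule via $W^{m,\infty}(K)\subset W^{m,q}(K)$ for finite $q$ instead of by density in $W^{m,\infty}$.
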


The two lemmas above give the following result.
\begin{lemma}
\label{lem::normH1XThXT}
Let $T\in\calTh$ and $\bv\in \bW^{m,p}(K)$ for any $K\in T^r$, with $0\le m\le k+1$ and $1\le p\le+\infty$. Set $\hK=\Fhk^{-1}(K)$, then it holds that
\begin{align*}
|\bv|_{W^{m,p}(K)}\lesssim  h_T^{3/p-m-2} \|\Rhkv\bv\|_{W^{m,p}(\hK)},
\quad
|\Rhkv\bv|_{W^{m,p}(\hK)} \lesssim  h_T^{m+2-3/p} \|\bv \|_{W^{m,p}(K)}.
\end{align*}
\end{lemma}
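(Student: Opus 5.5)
The plan is to write $\bv$ in terms of its Piola pullback and then transfer Sobolev norms through the composition scaling of Lemma~\ref{lem::NormInThT}. Set $\hbw:=\Rhkv\bv=A_{\Fhk}^{-1}(\bv\circ\Fhk)$, so that $\bv\circ\Fhk=A_{\Fhk}\hbw$ on $\hK$. The first estimate should follow from the first inequality of Lemma~\ref{lem::NormInThT} applied to $\hbv=\bv\circ\Fhk=A_{\Fhk}\hbw$:
\[
|\bv|_{W^{m,p}(K)}\lesssim h_T^{3/p-m}\|A_{\Fhk}\hbw\|_{W^{m,p}(\hK)}.
\]
Next I expand each derivative of order $j\le m$ of the product $A_{\Fhk}\hbw$ by the Leibniz rule. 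Every term has the form $\hnab^{i}A_{\Fhk}\,\hnab^{j-i}\hbw$, and Lemma~\ref{lem:ATBounds} bounds it in $L^p(\hK)$ by $h_T^{i-2}|\hbw|_{W^{j-i,p}(\hK)}\lesssim h_T^{-2}\|\hbw\|_{W^{m,p}(\hK)}$ for $h_T\lesssim1$. Hence $\|A_{\Fhk}\hbw\|_{W^{m,p}(\hK)}\lesssim h_T^{-2}\|\hbw\|_{W^{m,p}(\hK)}$, which gives the first inequality. The estimate holds on each subelement $\hK$ separately, because $\Fhk$ is smooth on all of $\hT$.

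For the second estimate I write $\hbw=A_{\Fhk}^{-1}\hbv$ with $\hbv=\bv\circ\Fhk$. The Leibniz rule and the bound $|A_{\Fhk}^{-1}|_{W^{i,\infty}(\hT)}\lesssim h_T^{2+i}$ (which vanishes for large $i$) from Lemma~\ref{lem:ATBounds} give
\[
|\hbw|_{W^{m,p}(\hK)}\lesssim\sum_{i=0}^{m}h_T^{2+i}|\hbv|_{W^{m-i,p}(\hK)}.
\]
By the second inequality of Lemma~\ref{lem::NormInThT}, applied with $m-i$ in place of $m$, we have $|\hbv|_{W^{m-i,p}(\hK)}\lesssim h_T^{m-i-3/p}\|\bv\|_{W^{m-i,p}(K)}$. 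Each term is therefore bounded by $h_T^{m+2-3/p}\|\bv\|_{W^{m,p}(K)}$, which is the claim.

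The step needing the most care is the bookkeeping of powers of $h_T$. In the first estimate, the lower-order terms with small $i$ carry $h_T^{i-2}$, which is worse than the target power, while higher derivatives of $\hbw$ need no compensation. This is exactly why the right-hand side has to be a full norm rather than a seminorm, and why $h_T\lesssim 1$ must be used to absorb all terms into the worst power $h_T^{-2}$. In the second estimate, the powers combine to exactly $h_T^{m+2-3/p}$ in every term, so the only point to check is that the range $0\le m\le k+1$ stays within the regime where Lemma~\ref{lem:ATBounds} and Lemma~\ref{lem::NormInThT} apply.
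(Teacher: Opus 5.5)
Your proposal is correct and takes the same approach as the paper, which proves this lemma only by saying it follows from Lemma~\ref{lem:ATBounds} and Lemma~\ref{lem::NormInThT}. Your Leibniz-rule bookkeeping supplies the omitted details: in the first bound you absorb $h_T^{i-2}\lesssim h_T^{-2}$, and in the second the powers cancel exactly, $h_T^{2+i}h_T^{m-i-3/p}=h_T^{m+2-3/p}$, with $k\geq 3$ ensuring $k+1\leq 2k-2$.
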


\section{The Curved Scott--Vogelius pair}
\label{sec:global-spaces}
For polynomial degree $k\geq3$, this section constructs a Scott--Vogelius pair on three-dimensional curved meshes following the framework of  \cite{neilan2021divergence,durst2024a}, and establishes its main properties.

\subsection{Local spaces} Recall that $\hT^r=\{\hK_1,\hK_2,\hK_3,\hK_4\}$ is the Alfeld split of the reference tetrahedron $\hT$. Define
\begin{equation*}
\begin{aligned}
\hbV (\hT) &:= \{\hbv\in \bH^1(\hT):\ \hbv|_{\hK}\in \bP_{k}(\hK) \text{ for all } \hK\in \hT^{r}\}, \\
\hQ (\hT) &:= \{\hq \in L^2(\hT):\ \hq|_{\hK} \in P_{k-1}(\hK) \text{ for all } \hK\in \hT^{r}\}.    
\end{aligned}
\end{equation*}
Using the Piola and composition transformations defined in \eqref{eq:Piola-V} and \eqref{eq:Piola-Q}, respectively, define the local spaces on each affine tetrahedron $\tT\in\tcalTh$ by
$$
\tbV(\tT):= \Phtv \hbV (\hT), \qquad   \tQ(\tT) := \Phtq   \hQ (\hT).
$$
Since $\Fht$ is affine, $\tbV(\tT)$ and $\tQ(\tT)$ are piecewise polynomial on $\tT^r$.
For each tetrahedron $T\in\calTh$, possibly with curved boundary, define
$$
\begin{aligned}
&\bV(T) := \Phkv \hbV (\hT), \qquad   &&Q(T) := \Phkq   \hQ (\hT).
\end{aligned}
$$
If $\Fhk$ is affine, then $\bV(T)=\tbV(\tT)$ and $Q(T)=\tQ(\tT)$; otherwise, both $\bV(T)$ and $Q(T)$ are not necessarily piecewise polynomial spaces.
The fact that $\Fhk=\Ftk\circ\Fht$ and the composition property \eqref{eq:pushforwcomposition} give the following relations
\begin{subequations}\label{eq:push-pull-tv-V}
\begin{align}
\bV(T)=\Ptkv\tbV(\tT),\qquad\quad Q(T)=\Ptkq\tQ(\tT),\label{eq:push-tV-V}\\
\tbV(\tT)=\Rtkv\bV(T),\qquad\quad \tQ(\tT)=\Rtkq Q(T).\label{eq:pull-V-tV}
\end{align}    
\end{subequations}
Define the subspaces of $\hbV(\hT)$ and $\hQ(\hT)$ by
$$
\begin{aligned}
&\hbV_0(\hT):=\hbV(\hT)\cap \bH^1_0(\hT),\qquad &\hQ_0(\hT):=\hQ(\hT)\cap L^2_0(\hT).
\end{aligned}
$$
The corresponding subspaces of $\bV(T)$ and $Q(T)$ are defined by
$$
\begin{aligned}
\bV_0(T) := \Phkv \hbV_0 (\hT), \qquad   Q_0(T) := \Phkq   \hQ_0 (\hT).
\end{aligned}
$$
Note that $\bV_0(T)\subset\bH^1_0(T)$, whereas $Q_0(T)$ may not be a subspace of $L^2_0(T)$. 
In~fact,
$$
\int_T {q}/{(J_{\Fhk}\circ\Fhk^{-1})}\dx = \int_{\hT}  \Rhkq q \hdx=0 \quad\text{ for all } q\in Q_0(T).
$$

The following local inf-sup stability follows from the argument of \cite[Theorem~3.9]{neilan2021divergence}, which will be used later.

\begin{lemma}\label{lem:local-stability}
Given $q\in Q_0(T)$, there exists $\bv\in\bV_0(T)$ such that
$$ \Div\bv = {h_T^3q}/{(J_{\Fhk}\circ \Fhk^{-1})}\quad
\text{ and } \quad
\|\bv\|_{H^1(T)} \lesssim \|q\|_{L^2(T)}.
$$
\end{lemma}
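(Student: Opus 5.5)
The plan is to pull the problem back to the reference Alfeld tetrahedron, solve it there with the reference Scott--Vogelius inf-sup stability, and push the solution forward by the Piola transform, using Lemma~\ref{lem:pro-AT} for the divergence and Lemma~\ref{lem::normH1XThXT} for the norm.

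First, given $q\in Q_0(T)$, set $\hq:=\Rhkq q = q\circ\Fhk$. By definition $Q_0(T)=\Phkq\hQ_0(\hT)$, so $\hq\in\hQ_0(\hT)$, i.e.\ $\hq$ is piecewise $P_{k-1}$ on $\hT^r$ with zero mean on $\hT$. The Scott--Vogelius pair on the Alfeld split is inf-sup stable for $k\ge3$ \cite{guzman2018inf}, and the divergence maps $\hbV_0(\hT)$ onto $\hQ_0(\hT)$. Hence there is $\hbv\in\hbV_0(\hT)$ with $\hat{\Div}\,\hbv = h_T^3\hq$ and $\|\hbv\|_{H^1(\hT)}\lesssim h_T^3\|\hq\|_{L^2(\hT)}$. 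The constant here depends only on the fixed reference configuration.

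Second, define $\bv:=\Phkv\hbv\in\bV_0(T)$. The Piola transform preserves vanishing traces, since $A_{\Fhk}$ is bounded and $\hbv=0$ on $\partial\hT$, so $\bv\in\bH_0^1(T)$. By \eqref{eq:pro-AT},
\[
\Div\bv\circ\Fhk=\hat{\Div}\,\hbv/J_{\Fhk}=h_T^3\hq/J_{\Fhk}.
\]
Composing with $\Fhk^{-1}$ gives exactly $\Div\bv=h_T^3q/(J_{\Fhk}\circ\Fhk^{-1})$. This is the reason for the weight in the statement.

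Third, the norm estimate. Apply Lemma~\ref{lem::normH1XThXT} on each $K\in T^r$ with $p=2$ and $m=0,1$:
\[
|\bv|_{H^m(K)}\lesssim h_T^{3/2-m-2}\|\hbv\|_{H^m(\hK)}\le h_T^{-1/2-m}\|\hbv\|_{H^1(\hK)}.
\]
Summing over $K$, the $m=1$ term dominates for $h_T\le1$, so $\|\bv\|_{H^1(T)}\lesssim h_T^{-3/2}\|\hbv\|_{H^1(\hT)}\lesssim h_T^{3/2}\|\hq\|_{L^2(\hT)}$. Lemma~\ref{lem::NormInThT} with $m=0$ gives $\|\hq\|_{L^2(\hT)}\lesssim h_T^{-3/2}\|q\|_{L^2(T)}$, and so $\|\bv\|_{H^1(T)}\lesssim\|q\|_{L^2(T)}$.

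The main point needing care is the scaling bookkeeping in the third step. Lemma~\ref{lem::normH1XThXT} is stated with the full $W^{m,p}$ norm of $\Rhkv\bv$ on the right-hand side, and it is valid because of the bounds on $A_{\Fhk}$ from Lemma~\ref{lem:ATBounds}. One must check that the lower-order $L^2$ part does not spoil the powers of $h_T$. It does not, since it carries the factor $h_T^{-1/2}$, which is smaller than $h_T^{-3/2}$. Apart from this, the argument only uses the reference surjectivity of the divergence, which is independent of $T$.
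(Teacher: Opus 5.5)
Your proposal is correct and follows the paper's proof essentially line for line: you pull $q$ back to $\hQ_0(\hT)$, invoke the reference Alfeld Scott--Vogelius result with the $h_T^3$ scaling, push forward by the Piola transform using \eqref{eq:pro-AT}, and finish with the scaling Lemmas~\ref{lem::normH1XThXT} and~\ref{lem::NormInThT}. Your explicit check that the $L^2$ part (factor $h_T^{-1/2}$) is dominated by the $H^1$ part (factor $h_T^{-3/2}$) spells out a step the paper leaves implicit.
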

\begin{proof}
For $q\in Q_0(T)$, let $\hq:=\Rhkq q\in\hQ_0(\hT)$. By
\cite[Theorem~3.3]{guzman2018inf}, there exists
$\hbv\in\hbV_0(\hT)$ such that
$$
\Div\hbv=h_T^3\hq,
\qquad
\|\hbv\|_{H^1(\hT)}
\lesssim h_T^3\|\hq\|_{L^2(\hT)}.
$$
Set $\bv:=\Phkv\hbv\in\bV_0(T)$. The Piola identity in \eqref{eq:pro-AT} gives
$$
\Div\bv
=({\Div\hbv}/{J_{\Fhk}})\circ\Fhk^{-1}
={h_T^3q}/{(J_{\Fhk}\circ\Fhk^{-1})}.
$$
Since $\hbv=\Rhkv\bv$ and $\hq=q\circ \Fhk$, the scaling estimates in Lemmas~\ref{lem::normH1XThXT} and~\ref{lem::NormInThT} yield
$$
\|\bv\|_{H^1(T)}
\lesssim h_T^{-3/2}\|\hbv\|_{H^1(\hT)}
\lesssim h_T^{3/2}\|\hq\|_{L^2(\hT)}
\lesssim \|q\|_{L^2(T)},
$$
which completes the proof.
\end{proof}

\subsection{Degrees of freedom}\label{sec:DOFs-VT}
Recall that a function in $\hbV(\hT)$ is uniquely determined by its values at the following Lagrange nodes:
(\romannumeral 1). the five vertices of $\hT^r$;
(\romannumeral 2). the $k-1$ points on each of the ten open edges of $\hT^r$;
(\romannumeral 3). the $(k-1)(k-2)/2$ points on each of the ten open triangular faces of~$\hT^r$;
(\romannumeral 4). the $(k-1)(k-2)(k-3)/6$ points in each of the four open subtetrahedra~of~$\hT^r$.
In (\romannumeral 3) and (\romannumeral 4), the nodes must be selected so that the corresponding nodal values uniquely determine functions in $P_{k-3}$ on each face and in $P_{k-4}$ on each subtetrahedron, respectively.
Denote this nodal set by $\hNT :=\{\hat{a}_i\}_{i=1}^{n_k}$, where $n_k$ is the total number of these nodes.

For each $\tT\in\tcalTh$ and the corresponding element $T\in\calTh$, define the node sets
\begin{equation*}
\tNT:=\Fht(\hNT),\qquad\NT:=\Fhk(\hNT).   
\end{equation*}
Then $\NT=\Ftkh(\tNT)$. The unisolvence of the reference nodal degrees of freedom (DOFs) for $\hbV(\hT)$ and the invertibility of $A_{\Fhk}$ imply the following lemma. 
\begin{lemma}\label{lm:dofs-VT}
$\bv\in\bV(T)$ is uniquely determined by its values $\bv(a)$ for all $a\in\NT$.
\end{lemma}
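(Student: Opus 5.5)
The plan is to reduce the claim to the unisolvence of the reference Lagrange degrees of freedom for $\hbV(\hT)$, using the pointwise invertibility of $A_{\Fhk}$. Since $\bV(T)=\Phkv\hbV(\hT)$ and $\Phkv$ is a linear bijection, it suffices to show two things: the linear map $\bv\mapsto(\bv(a))_{a\in\NT}$ is injective on $\bV(T)$, and $\dim\bV(T)=\dim\hbV(\hT)=3n_k$, which equals the number of scalar conditions. Injectivity combined with the dimension count then gives the full unisolvence statement, i.e., the values may also be prescribed arbitrarily.

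For injectivity, take $\bv\in\bV(T)$ with $\bv(a)=\bzero$ for all $a\in\NT$, and write $\bv=\Phkv\hbv$ with $\hbv\in\hbV(\hT)$. Every $a\in\NT$ has the form $a=\Fhk(\ha)$ with $\ha\in\hNT$. By \eqref{eq:Piola-V},
\[
\bv(a)=A_{\Fhk}(\ha)\,\hbv(\ha).
\]
Next, \eqref{Prop-FT:eq2} gives $J_{\Fhk}(\ha)\neq0$, so $A_{\Fhk}(\ha)=D\Fhk(\ha)/J_{\Fhk}(\ha)$ is invertible. Hence $\hbv(\ha)=\bzero$ for all $\ha\in\hNT$. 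The unisolvence of the reference nodal set $\hNT$ for $\hbV(\hT)$ (each component in the Lagrange space on the Alfeld split) then forces $\hbv=\bzero$, and therefore $\bv=\bzero$.

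For surjectivity, or equivalently to obtain a constructive version, let prescribed values $\bv_a\in\mathbb{R}^3$ be given. Set $\hbv(\ha):=A_{\Fhk}(\ha)^{-1}\bv_{\Fhk(\ha)}$, take the unique $\hbv\in\hbV(\hT)$ interpolating these nodal values, and push it forward with $\Phkv$.

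The only subtle point is that $\Fhk$ must be a bijection from $\hNT$ onto $\NT$ with nonvanishing Jacobian at every node, so that each physical node corresponds to exactly one reference node. This follows from $\Fhk$ being a diffeomorphism $\hT\to T$, by \eqref{Prop-FT}.
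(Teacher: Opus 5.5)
Your proof is correct and follows the same route as the paper. The paper justifies the lemma in one line: reference unisolvence for $\hbV(\hT)$ plus pointwise invertibility of $A_{\Fhk}$, via $\bv(\Fhk(\ha))=A_{\Fhk}(\ha)\hbv(\ha)$. Your dimension count and constructive surjectivity only make explicit the full unisolvence that the paper later relies on to define $\ST$.
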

Since the nodal DOFs are unisolvent on both $\bV(T)$ and $\tbV(\tT)$, the nodal correspondence defines an isomorphism $\ST:\tbV(\tT)\to\bV(T)$ by
$$
(\ST\tbv)(a)=\tbv(\ta)\quad\text{for all }\ta\in\tNT,\ a=\Ftkh(\ta).  
$$
The arguments in \cite[Lemma 3.5]{neilan2021divergence} and \cite[Lemma 4.4]{durst2024a}, together with Lemmas~\ref{lem:ATBounds} and \ref{lem::NormInThT}, yield the following interpolation estimate.
\begin{lemma}\label{eq:interpolation-error}
Let $\bIT:\bH^2(T)\to\bV(T)$ be the interpolation operator determined by $(\bIT\bu)(a)=\bu(a)$ for all $a\in\NT$. Then for $\bu\in\bH^s(T)$ with $s\geq2$, it holds that
$$
\|\bu-\bIT\bu\|_{H^m(K)}\lesssim h_T^{l-m}\|\bu\|_{H^{l}(T)}\quad\text{for all~} K\in T^r,\ 0\leq m\leq l:=\min\{k+1,s\}.
$$
\end{lemma}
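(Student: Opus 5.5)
The plan is to pull the problem back to the reference Alfeld tetrahedron and apply a Bramble--Hilbert argument there. The key observation is that the interpolant commutes with the Piola pullback on the reference element.

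\textbf{Step 1 (pullback identity).} Set $\hbu:=\Rhkv\bu=A_{\Fhk}^{-1}(\bu\circ\Fhk)$. Let $\hat{\bI}$ be the nodal Lagrange interpolant onto $\hbV(\hT)$ at the nodes $\hNT$. Since $(\bIT\bu)(a)=\bu(a)$ for $a=\Fhk(\ha)$, and $A_{\Fhk}^{-1}$ acts pointwise, we get $(\Rhkv\bIT\bu)(\ha)=A_{\Fhk}^{-1}(\ha)\bu(a)=\hbu(\ha)$. By the unisolvence in Lemma~\ref{lm:dofs-VT}, this gives $\Rhkv\bIT\bu=\hat{\bI}\hbu$, and hence
\[
\bu-\bIT\bu=\Phkv(\hbu-\hat{\bI}\hbu).
\]

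\textbf{Step 2 (scaling to the reference element).} On each $K\in T^r$, Lemma~\ref{lem::normH1XThXT} (with $p=2$) gives
\[
|\bu-\bIT\bu|_{H^m(K)}\lesssim h_T^{-1/2-m}\|\hbu-\hat{\bI}\hbu\|_{H^m(\hK)}.
\]
Since $\hat{\bI}$ is bounded from $\bH^2(\hT)$ into a piecewise polynomial space on $\hT^r$ and reproduces $\bP_{l-1}(\hT)$ with $l-1\le k$, the Bramble--Hilbert lemma applied on the whole of $\hT$ (elementwise on $\hK$ for the norm on the left) gives
\[
\|\hbu-\hat{\bI}\hbu\|_{H^m(\hK)}\lesssim|\hbu|_{H^l(\hT)}.
\]
Note that $\hbu$ is smooth across the internal faces because $A_{\Fhk}$ and $\Fhk$ are smooth on $\hT$.

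\textbf{Step 3 (return to $T$; the main obstacle).} It remains to show $|\hbu|_{H^l(\hT)}\lesssim h_T^{l+1/2}\|\bu\|_{H^l(T)}$. The difficulty is that Lemma~\ref{lem::normH1XThXT} as stated yields the full norm $\|\bu\|_{H^l(T)}$, and the full norm is in fact what the claimed estimate contains. A sharper seminorm version is unnecessary, but the powers of $h_T$ must be tracked carefully.

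I would expand by the Leibniz rule:
\[
\hnab^{l}\hbu=\sum_{j\le l}\hnab^{\,l-j}A_{\Fhk}^{-1}\,\hnab^{j}(\bu\circ\Fhk).
\]
Lemma~\ref{lem:ATBounds} gives $|A_{\Fhk}^{-1}|_{W^{l-j,\infty}}\lesssim h_T^{2+l-j}$. Lemma~\ref{lem::NormInThT}, or more precisely the Fa\`a di Bruno expansion behind it combined with \eqref{Prop-FT:eq1}, gives $|\bu\circ\Fhk|_{H^j(\hT)}\lesssim h_T^{j-3/2}\|\bu\|_{H^j(T)}$. Each term is therefore bounded by
\[
h_T^{2+l-j}\,h_T^{j-3/2}\|\bu\|_{H^j(T)}=h_T^{l+1/2}\|\bu\|_{H^j(T)},
\]
so the total is $\lesssim h_T^{l+1/2}\|\bu\|_{H^l(T)}$.

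Combining Steps~2 and~3 gives
\[
\|\bu-\bIT\bu\|_{H^m(K)}\lesssim h_T^{-1/2-m}\,h_T^{l+1/2}\|\bu\|_{H^l(T)}=h_T^{l-m}\|\bu\|_{H^l(T)},
\]
after summing the seminorms over orders $0,\dots,m$ (the lower orders carry higher powers of $h_T$, which is harmless since $h_T\lesssim1$).

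The step requiring the most care is Step~3, in particular checking that the derivatives of $A_{\Fhk}^{-1}$ supply exactly the compensating powers of $h_T$. This is precisely the content of Lemma~\ref{lem:ATBounds}, as in \cite[Lemma 3.5]{neilan2021divergence}.
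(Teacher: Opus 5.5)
Your proof is correct and takes essentially the route the paper points to (the argument of \cite[Lemma~3.5]{neilan2021divergence}). You use the Piola pullback to turn $\bIT$ into the reference nodal interpolant, apply Bramble--Hilbert on the Alfeld-split reference element, and then use the Leibniz rule with Lemma~\ref{lem:ATBounds} and the scalings of Lemmas~\ref{lem::NormInThT}--\ref{lem::normH1XThXT} to balance $h_T^{-1/2-m}$ against $h_T^{l+1/2}$. Note also that $l-j\le k+1\le 2k-2$ for $k\ge 3$, so the range of Lemma~\ref{lem:ATBounds} covers every term in your Leibniz expansion.
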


\subsection{Global spaces}
Recall the Scott--Vogelius pair on the affine triangulation $\tcalTh$ as
$$
\begin{aligned}
\tbV_h &= \{ \tbv_h \in \bH_0^1(\tOmega_h):\ \tbv_h |_{\tT} \in \tbV(\tT) \text{ for all } \tT \in \tcalTh \}, \\
\tQ_h & = \{ \tq_h \in L_0^2(\tOmega_h):\ \tq_h |_{\tT} \in \tQ(\tT) \text{ for all } \tT \in \tcalTh  \}.
\end{aligned}
$$
The Stokes pair $\tbV_h\times\tQ_h$ is inf-sup stable and divergence-free \cite{guzman2018inf}. 
Define the operators $\Sh$ and $\Gh$ elementwise by
$$\Sh|_T=\ST:\tbV(\tT)\to\bV(T)\quad\text{and}\quad\Gh|_T=\Ptkq:\tQ(\tT)\to Q(T),$$
for all $T=\Ftk(\tT)\in\calTh$.
The global spaces on $\Omega_h$ are then given by
$$
\begin{aligned}
\bV_h:=\{\bv_h:\ \bv_h=\Sh\tbv_h,\,\tbv_h\in\tbV_h\},\quad Q_h:=\{q_h:\ q_h=\Gh\tq_h,\,\tq_h\in\tQ_h\}.
\end{aligned}
$$
\begin{remark}\label{re:Vh-Con-Sh-ISO}
In fact, $\bV_h$ consists of functions that belong to $\bV(T)$ on each $T\in\calTh$, are continuous at the nodes specified in Lemma~\ref{lm:dofs-VT}, and vanish on $\partial\Omega_h$.
Besides, $\Sh:\tbV_h\to\bV_h$ is an isomorphism, since each local map $\ST$ is an isomorphism.
\end{remark}

The Piola transform and nodal continuity ensure that $\bV_h$ is $\bH(\Div)$-conforming, even across curved interior faces (see Figure~\ref{fig:curved-interface-2D-3D}). The proof below follows from that of \cite[Lemma~3.5]{demlow2025taylor} for a Taylor--Hood method for the surface Stokes problem and is provided here for completeness.
\begin{theorem}
\label{thm:Hdiv-conformity}
The global velocity space $\bV_h$ satisfies
$$
\bV_h \subset \bH_0(\Div;\Omega_h) := \{ \bv_h \in \bL^2(\Omega_h): \Div \bv_h \in L^2(\Omega_h), \bv_h \cdot \bn|_{\partial \Omega_h} =0\}.
$$
\end{theorem}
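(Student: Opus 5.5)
The proof reduces $\bH(\Div)$-conformity to normal-trace continuity across interior faces, plus vanishing normal trace on $\partial\Omega_h$. On each $T\in\calTh$ we have $\bv_h|_T=\Phkv\hbv$ with $\hbv\in\hbV(\hT)$ smooth on each $\hK$ and continuous. By Lemma~\ref{lem:pro-AT}, $\bv_h|_T\in\bH(\Div;T)$, and it is smooth up to the boundary of each curved subelement. It therefore suffices to show that for every $F\in\calFhI$ shared by $T^\pm$,
$$(\bv_h^+\cdot\bn_F)|_F=(\bv_h^-\cdot\bn_F)|_F .$$
A piecewise integration by parts then gives a global $L^2$ divergence. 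The boundary condition is immediate: $\bv_h$ vanishes at all nodes of $\NT$ on boundary faces, so by unisolvence of the reference face nodes the reference trace vanishes, and hence $\bv_h=0$ there.

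For an interior face $F$, pull everything back to a common flat face. Let $\tF=\Ftkh^{-1}(F)$ be the shared affine face of $\tT^\pm$, and let $\FFhkpm=\Phi_{T^\pm}|_{\hF^\pm}$. Because the maps $\Ftk$ are compatible across faces, we can write $\FFhkpm=\psi_F\circ\tilde\phi^\pm$, where $\psi_F:=\Ftkh|_{\tF}$ is a single map shared by both sides and $\tilde\phi^\pm:\hF^\pm\to\tF$ are affine. Apply the normal-trace identity \eqref{eq:face-equation-AT} twice: once with $\Phi=\FOhkpm$, and once with the affine maps $\Phi_{\tT^\pm}$, composed with $\Psi_{T^\pm}$. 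This gives, on $\tF$,
$$\mu_{\psi_F}\,(\bv_h^\pm\cdot\bn^\pm)\circ\psi_F=\tbw^\pm\cdot\tbn^\pm,\qquad \tbw^\pm:=\Rtkv(\bv_h|_{T^\pm})\in\tbV(\tT^\pm).$$
Here $\mu_{\psi_F}$ is common to both sides, since it depends only on $\psi_F$. The right-hand side is a polynomial on each of the three subtriangles of $\tF$ induced by the Alfeld split, of degree at most $k$. It is determined by the normal component of the piecewise-$P_k$ trace of $\tbw^\pm$ on $\tF$, and since $\Fht$ is affine, $A_{\Fht}$ is constant.

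Next, use nodal continuity. By Remark~\ref{re:Vh-Con-Sh-ISO}, $\bv_h^+(a)=\bv_h^-(a)$ for all nodes $a\in\Ftkh(\tNT)$ lying on $F$. At $\ta=\psi_F^{-1}(a)$ this gives $\tbw^\pm(\ta)=A_{\Ftk^\pm}^{-1}(\ta)\,\bv_h^\pm(a)$. The key observation is that $A_{\Psi_{T^+}}$ and $A_{\Psi_{T^-}}$ generally differ on $\tF$, because they involve the full Jacobians, including the normal derivative. So the traces of $\tbw^\pm$ themselves need not agree; only their normal components should. We handle this as follows. The normal component $\tbn\cdot A^{-1}_{\Psi}=\tbn^\trans J_\Psi (D\Psi)^{-1}$ equals the cofactor-row quantity $\mu_{\psi_F}\,\bn_F^\trans\circ\psi_F$, up to the orientation sign, which depends only on the tangential derivatives of $\psi_F$ (Nanson's formula). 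Hence
$$\tbw^\pm(\ta)\cdot\tbn=\mu_{\psi_F}(\ta)\,\bn_F(a)\cdot\bv_h^\pm(a),$$
and these agree on both sides at every face node $\ta\in\tF$. Since $\tbw^\pm\cdot\tbn$ is a piecewise $P_k$ function on the Alfeld-induced split of $\tF$, continuous on $\tF$, and the face nodes are unisolvent for that space, we get $\tbw^+\cdot\tbn=\tbw^-\cdot\tbn$ on $\tF$. Dividing by $\mu_{\psi_F}>0$ and composing with $\psi_F^{-1}$ yields the desired normal continuity, with the signs of $\bn^\pm=\pm\bn_F$ handled consistently.

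The main obstacle is exactly this normal-component step. We must verify through Nanson's formula that $\tbn^\trans A_{\Psi_{T^\pm}}^{-1}$ depends only on the face map, and check that the surface Jacobian in Lemma~\ref{lem:pro-AT} matches it. If that route turns out awkward, the fallback is to use the defining property \eqref{eq:def-surface-mu}: show that $\int_F(\bv_h^+-\bv_h^-)\cdot\bn_F\,g\ds=0$ for all $g$ by pulling back to $\tF$ and invoking the polynomial identity above.
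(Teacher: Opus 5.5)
Your proof is correct and follows the paper's route. You pull back to the affine face $\tF$ through the shared map $\psi_F=\Ftkh|_{\tF}$ and use the normal-trace identity \eqref{eq:face-equation-AT} with the common surface Jacobian $\mu_{\psi_F}$. Nodal continuity and $\bn^+=-\bn^-$ then make $\tbw^+\cdot\tbn^++\tbw^-\cdot\tbn^-$ vanish at the face Lagrange nodes, and unisolvence with $\mu_{\psi_F}>0$ gives normal continuity.

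One statement needs correcting, though. The three-dimensional Alfeld split does not subdivide the faces of $\tT$: each subtetrahedron is the cone from the barycenter over one face. So the trace on $\tF$ is a single polynomial in $\bP_k(\tF)$, and that is exactly why the $(k+1)(k+2)/2$ nodes on $\tF$ are unisolvent. For continuous piecewise-$P_k$ functions on a three-triangle split of $\tF$, as you describe it, those nodes would not suffice, so your unisolvence claim is only true once the space is identified correctly.

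The Nanson-formula step you flag as the main obstacle is redundant. Evaluating at the nodes the pointwise identity $\mu_{\psi_F}(\bv_h^\pm\cdot\bn^\pm)\circ\psi_F=\tbw^\pm\cdot\tbn^\pm$, which you already derived, gives the nodal relation directly. That is how the paper argues.
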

\begin{proof}
Set $\bv_h\in\bV_h$. Let $F=\partial T^+\cap\partial T^-\in\calFhI$. Set $\tT^\pm:=\Ftkh^{-1}(T^\pm)$,
$\bv^\pm:=\bv_h|_{T^\pm}$, and
$\tbv^\pm:=\mathcal R_{\Psi_{T^\pm}}^V\bv^\pm
\in\tbV(\tT^\pm)$,
where the inclusion follows from \eqref{eq:pull-V-tV}.
Thus $\tbv^\pm|_{\tF}\in \bP_k(\tF)$ for
$\tF:=\partial\tT^+\cap\partial\tT^-$.
Set $\psi_F:=\Ftkh|_{\tF}:\tF\to F$.
Let $\bn^\pm$ and $\tilde{\bn}^\pm$ be the outward unit normals
to $T^\pm$ and $\tT^\pm$.
By the normal-trace identity~\eqref{eq:face-equation-AT},
$$
\tbv^+\cdot\tilde{\bn}^+ +\tbv^-\cdot\tilde{\bn}^-
= \mu_{\psi_F} (\bv^+\cdot\bn^++\bv^-\cdot\bn^-)\circ\psi_F
\quad\text{on }\tF.
$$
The left-hand side belongs to $P_k(\tF)$ and vanishes at all
Lagrange nodes of degree~$k$ on $\tF$, since $\bv_h$ is continuous
at their images under $\psi_F$ and $\bn^+=-\bn^-$ on $F$.
Unisolvence and $\mu_{\psi_F}>0$ therefore imply
$\bv^+\cdot\bn^++\bv^-\cdot\bn^-=0$ on $F$.
Since $\bv_h|_T\in\bH^1(T)$ for each $T\in\calTh$ and
$\bv_h=0$ on $\partial\Omega_h$, it follows that
$\bv_h\in\bH_0(\Div;\Omega_h)$.
\end{proof}

The velocity space $\bV_h$ is a subspace of
$\bH_0(\Div;\Omega_h)$ but not of $\bH_0^1(\Omega_h)$.
This results in a consistency error that may reduce the convergence rates unless functions in $\bV_h$ have sufficient weak continuity across interior faces.
In two dimensions, Durst and Neilan~\cite{durst2024a} obtain the required weak continuity by choosing Gauss--Lobatto points as the nodal DOFs on interior edges.
Since such points do not exist on triangular faces~\cite{xu2011on}, their analysis cannot be carried over directly to three dimensions.
To recover the optimal convergence rates, this paper adds consistency terms on each interior face but without any penalty term, which is used to ensure the coercivity of the resulting bilinear form.
This is guaranteed by the enhanced jump estimate established in the next subsection.

\subsection{Enhanced jump estimate}
Define the isoparametric Lagrange finite element space of degree $k$ by
$
\bV_h^{\mathrm{iso}}:=\{\bv_h:\bv_h=\tbv_h\circ\Ftkh^{-1},\,\tbv_h\in\tbV_h\},
$
which is a subspace of $\bH^1_0(\Omega_h)$.
The isomorphism $\Sh$ induces the operator $\Eh:\bV_h\to\bV_h^{\mathrm{iso}}\subset\bH^1_0(\Omega_h)$,
$$
\Eh\bv_h:=(\Sh^{-1}\bv_h)\circ\Ftkh^{-1}\quad\text{ for }\bv_h\in\bV_h.
$$
The following lemma extends \cite[Lemma 4.5]{neilan2021divergence} to three dimensions, which applies to the enhanced jump estimate below.

\begin{lemma}
\label{lemma:enrichment-estimate}
For sufficiently small $h$,
every $\bv_h\in\bV_h$ and $T\in\calTh$ satisfy
\begin{equation*}
\sum_{m=0,1}
h_T^m\|\bv_h-\Eh\bv_h\|_{H^m(T)}\lesssim h_T^2\min\{\|\nabla\bv_h\|_{L^2(T)},\|\nabla(\Eh\bv_h)\|_{L^2(T)}\}.
\end{equation*}
\end{lemma}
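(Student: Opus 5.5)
The plan is to work on the reference macroelement and reduce everything to a nodal interpolation error. Fix $T=\Ftk(\tT)\in\calTh$ and $\bv=\bv_h|_T\in\bV(T)$, and set $\hbv:=\Rhkv\bv\in\hbV(\hT)$. Let $\tbv:=\ST^{-1}\bv\in\tbV(\tT)$ and $\hbw:=\mathcal R_{\Fht}^V\tbv\in\hbV(\hT)$. Write $A:=A_{\Fhk}$ and $\tilde A:=A_{\Fht}$; the latter is a constant matrix of size $h_T^{-2}$. Then
\[
\bv\circ\Fhk=A\hbv,\qquad (\Eh\bv)\circ\Fhk=\tbv\circ\Fht=\tilde A\hbw .
\]
The nodal definition of $\ST$ says exactly that $A\hbv=\tilde A\hbw$ at every $\ha\in\hNT$.

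Let $\hat I$ denote the nodal interpolant onto $\hbV(\hT)$ with respect to $\hNT$. Since $\hbv\in\hbV(\hT)$ and $\hbv=A^{-1}\tilde A\hbw$ at the nodes, we have $\hbv=\hat I(A^{-1}\tilde A\hbw)$. The error function on the reference element is therefore
\[
\hat g:=(\bv-\Eh\bv)\circ\Fhk = A\,(\hat I-\mathrm{id})\bigl(M\hbw\bigr),\qquad M:=A^{-1}\tilde A .
\]
Equivalently, $\hat g=(\mathrm{id}-\hat I)\bigl((A-\tilde A)\hbv\bigr)$, because $\tilde A(\hbv-\hbw)\in\hbV(\hT)$ is determined by its nodal values. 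The two representations give the two branches of the minimum: the first is expressed in $\hbw$, i.e.\ $\Eh\bv$; the second in $\hbv$, i.e.\ $\bv$.

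Next I would apply Bramble--Hilbert on each $\hK\in\hT^r$:
\[
\|(\mathrm{id}-\hat I)f\|_{H^m(\hK)}\lesssim|f|_{H^{k+1}(\hK)} .
\]
Take $f=(A-\tilde A)\hbv$, where $\hbv|_{\hK}\in\bP_k$. Every term of $D^{k+1}f$ carries at least one derivative on $A$. By Lemma~\ref{lem:ATBounds}, $|A|_{W^{j,\infty}}\lesssim h_T^{j-2}\le h_T^{-1}$ for $j\ge1$. Hence
\[
|f|_{H^{k+1}(\hK)}\lesssim h_T^{-1}\|\hbv\|_{H^k(\hK)}\lesssim h_T^{-1}\|\hbv\|_{H^1(\hK)},
\]
where the last step is norm equivalence on the fixed finite-dimensional polynomial space. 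Compared with $\|A\hbv\|\sim h_T^{-2}\|\hbv\|$, this is a gain of one power of $h_T$. Scaling back with Lemmas~\ref{lem::NormInThT} and~\ref{lem::normH1XThXT} then gives, for $m=0,1$,
\[
h_T^m\|\bv-\Eh\bv\|_{H^m(T)}\lesssim h_T^{3/2-2+1}\|\hbv\|_{H^1(\hT)} .
\]
The same argument with $M\hbw$ in place of $(A-\tilde A)\hbv$ bounds the error by $\|\hbw\|_{H^1(\hT)}$. This uses $|A^{-1}|_{W^{j,\infty}}\lesssim h_T^{2+j}$ from Lemma~\ref{lem:ATBounds}, so $|M|_{W^{j,\infty}}\lesssim h_T^{j}\le h_T$ for $j\ge1$.

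The main obstacle is replacing the full norm $\|\hbv\|_{H^1}$ (resp.\ $\|\hbw\|_{H^1}$) by a gradient seminorm. The reason is that $\hat I-\mathrm{id}$ does not annihilate $M\hbw$ when $\hbw$ is constant. For the $\Eh\bv$ branch I would split $\hbw=\hbw_0+c$, where $c$ is the mean value. For the fluctuation $\hbw_0$, Poincaré gives $\|\hbw_0\|_{H^1}\lesssim|\hbw|_{H^1}\approx h_T^{1/2}\|\nabla(\Eh\bv)\|_{L^2(T)}$, which produces exactly $h_T^2\|\nabla\Eh\bv\|_{L^2(T)}$. For the constant part, the stronger bound $|Mc|_{H^{k+1}}\lesssim h_T^{k+1}|c|$ holds, so that contribution is $O(h_T^{k+1})$ relative to the natural size. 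I then need to control $|c|$ by a gradient. The idea is that $\Eh\bv=\tilde A c$ being constant forces $\bv=\ST(\tilde Ac)$, and its gradient $\nabla\bv$ is of the same order $h_T^{k+1}$ as the defect itself. A finite-dimensional compactness and scaling argument on the reference element, with $h$ small, should then absorb the constant mode into $\|\nabla\bv_h\|_{L^2(T)}$ or $\|\nabla(\Eh\bv)\|_{L^2(T)}$. This is where "sufficiently small $h$" enters.

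If that absorption fails, the fallback is to use the identity $\nabla\bv=\nabla\Eh\bv+\nabla(\bv-\Eh\bv)$ together with the $m=1$ bound. With $h$ small this gives the equivalence $\|\nabla\bv\|_{L^2(T)}\approx\|\nabla\Eh\bv\|_{L^2(T)}$, so it suffices to prove the estimate for either one of the two quantities in the minimum.
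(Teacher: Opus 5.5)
Your reduction to the reference macroelement is correct and matches the paper's. The defect $(\bv_h-\Eh\bv_h)\circ\Fhk$ is the nodal interpolation error of $A_{\Fhk}\hbv$; subtracting $A_{\Fht}\hbv$ does not change the piecewise $H^{k+1}$ seminorm. Bramble--Hilbert with Lemma~\ref{lem:ATBounds} then gives $h_T^m\|\bv_h-\Eh\bv_h\|_{H^m(T)}\lesssim h_T\|\bv_h\|_{L^2(T)}+h_T^2\|\nabla\bv_h\|_{L^2(T)}$.

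The gap is the step you flag yourself, and your remedy cannot work. No argument confined to a single element can remove the constant mode, because the local statement is false. On a curved $T$, take $\bv:=\ST d$ with a constant $d\in\mathbb{R}^3\setminus\{0\}$. Then $(\ST^{-1}\bv)\circ\Ftk^{-1}=d$ has zero gradient. Yet $\bv\neq d$ whenever $A_{\Fhk}^{-1}d\notin\hbV(\hT)$, which is the generic case since $A_{\Fhk}^{-1}$ has degree up to $2k-2$. So the minimum on the right-hand side is zero while the left-hand side is positive. The $\bv$-branch alone fails too: $\nabla\bv=\nabla(\bv-d)$, so the case $m=1$ would read $\|\bv-d\|_{H^1(T)}\lesssim h_T\|\nabla(\bv-d)\|_{L^2(T)}$, which forces $\bv=d$ for small $h$. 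Your observation that the gradient and the defect of the constant mode have the same order $h_T^{k+1}$ is exactly why absorption fails; you would need the defect to be smaller by a factor $h_T$.

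Your fallback is circular. Your $m=1$ bound still contains $\|\bv_h\|_{L^2(T)}$, which neither gradient controls, so it does not yield $\|\nabla\bv_h\|_{L^2(T)}\approx\|\nabla(\Eh\bv_h)\|_{L^2(T)}$.

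The missing ingredient is global: the homogeneous boundary condition in $\bV_h$. A macroelement is curved only if it has a boundary edge or face. Hence it has a vertex $a=\Fhk(\ha)\in\partial\Omega_h$ with $\bv_h(a)=0$, and invertibility of $A_{\Fhk}(\ha)$ and $A_{\Fht}$ gives $\hbv(\ha)=0$ and $\hbw(\ha)=0$. The subspace of $\hbV(\hT)$ vanishing at $\ha$ contains no nonzero constants, so finite-dimensional norm equivalence yields $\|\hbv\|_{L^2(\hT)}\lesssim|\hbv|_{H^1(\hT)}$. Scaling then gives $\|\bv_h\|_{L^2(T)}\lesssim h_T(\|\bv_h\|_{L^2(T)}+\|\nabla\bv_h\|_{L^2(T)})$. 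For small $h$ this kicks back to $\|\bv_h\|_{L^2(T)}\lesssim h_T\|\nabla\bv_h\|_{L^2(T)}$, which closes the $\bv_h$-branch.

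This is what the paper does; it then gets the $\Eh\bv_h$-branch from the case $m=1$ and the triangle inequality. With the same vertex condition, your $\hbw$-representation would also give that branch directly, via $\|\hbw\|_{H^1(\hT)}\lesssim|\hbw|_{H^1(\hT)}\lesssim h_T^{3/2}\|\nabla(\Eh\bv_h)\|_{L^2(T)}$. Affine elements need nothing, since there $\bv_h=\Eh\bv_h$.
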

\begin{proof}
For each $T\in\calTh$, the definitions of $\Sh$ and $\Eh$ yield
\begin{equation}\label{eq:node-equive-vh-Ehvh}
\Eh\bv_h|_T(a)=\bv_h|_T(a)\qquad\text{for all }a\in\NT.    
\end{equation}
If $T$ is affine, both functions belong to the
piecewise polynomial space of degree $k$, and hence coincide by unisolvence.
Thus the estimate holds in this case. 
 
Suppose that $T$ is curved. Set $\hbv_T:=\Rhkv(\bv_h|_T)$ and $\hbw_T:=(\Eh\bv_h|_T)\circ\Fhk$,  then $\hbv_T,\hbw_T\in\hbV(\hT)$. This and \eqref{eq:node-equive-vh-Ehvh} give
$$
A_{\Fhk}(\ha)\hbv_T(\ha)=\hbw_T(\ha)
\qquad\text{for all }\ha\in\hNT.
$$
Thus $\hbw_T$ is the piecewise degree-$k$ Lagrange interpolant of $A_{\Fhk}\hbv_T$ on $\hT^{r}$. The Bramble--Hilbert lemma therefore yields
$$
\|A_{\Fhk}\hbv_T-\hbw_T\|_{H^m(\hK)} \lesssim |A_{\Fhk}\hbv_T|_{H^{k+1}(\hK)},\quad \hK\in\hT^{r},~m=0,1.$$
Since $\hbv_T|_{\hK}$ is polynomial of degree at most $k$, the right-hand side is bounded by the estimates of $A_{\Fhk}$ in Lemma~\ref{lem:ATBounds} and finite-dimensional norm equivalence
$$
\begin{aligned}
|A_{\Fhk}\hbv_T|_{H^{k+1}(\hK)} &\lesssim
\sum_{j=0}^k
|A_{\Fhk}|_{W^{k+1-j,\infty}(\hK)}
|\hbv_T|_{H^j(\hK)}  \\&\lesssim h_T^{-1}\|\hbv_T\|_{H^{k}(\hK)}\lesssim
h_T^{-1}\|\hbv_T\|_{L^2(\hK)}\lesssim
h_T^{-1/2}\|\bv_h\|_{L^2(T)},
\end{aligned}
$$
where the last inequality follows from Lemma~\ref{lem::normH1XThXT}.
The three-dimensional scaling estimates in Lemma \ref{lem::NormInThT} then give for $m=0,1$
\begin{equation}\label{eq:vhEh-con-vh-L2}
\begin{aligned}
\|\bv_h-\Eh\bv_h\|_{H^m(T)}
\lesssim h_T^{3/2-m}\|A_{\Fhk}\hbv_T-\hbw_T\|_{H^m(\hT)}
\lesssim h_T^{1-m}\|\bv_h\|_{L^2(T)}. 
\end{aligned}
\end{equation}
By the mesh construction, $T$ has a boundary vertex
$a=\Fhk(\hat a)$. Since $\bv_h(a)=0$ and $A_{\Fhk}(\hat a)$ is
invertible, $\hbv_T(\hat a)=0$. Finite-dimensional norm
equivalence yields
$\|\hbv_T\|_{L^2(\hT)}
\lesssim |\hbv_T|_{H^1(\hT)}.$
Together with Lemma~\ref{lem::normH1XThXT}, this gives
\begin{equation*}
\begin{aligned}
\|\bv_h\|_{L^2(T)}&\lesssim h_T^{-1/2}\|\hbv_T\|_{L^2(\hT)}\lesssim h_T^{-1/2}|\hbv_T|_{H^1(\hT)}\lesssim h_T(\|\bv_h\|_{L^2(T)}+\|\nabla\bv_h\|_{L^2(T)}).
\end{aligned}
\end{equation*}
For sufficiently small $h$, the above inequality yields $\|\bv_h\|_{L^2(T)}\lesssim h_T\|\nabla\bv_h\|_{L^2(T)}$.
This and \eqref{eq:vhEh-con-vh-L2} imply $h_T^m\|\bv_h-\Eh\bv_h\|_{H^m(T)}\lesssim h_T^{2}\|\nabla\bv_h\|_{L^2(T)}$. Taking $m=1$ and applying the triangle inequality imply $\|\nabla\bv_h\|_{L^2(T)} \lesssim\|\nabla(\Eh\bv_h)\|_{L^2(T)}$, which completes the proof.
\end{proof}

\begin{lemma}[Enhanced jump estimate]
\label{corollary:jump-estimate}
For sufficiently small $h$, it holds that
\begin{equation}
\label{estimate:local-jump}
h_F^{-1/2}
\|\jump{\bv_h}\|_{L^2(F)}
\lesssim
h_F
\sum_{T\in\omega_F}
\|\nabla\bv_h\|_{L^2(T)}\quad\text{for all }\bv_h\in\bV_h,~ F\in \calFhI.
\end{equation}
\end{lemma}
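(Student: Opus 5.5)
The key observation is that $\Eh\bv_h\in\bV_h^{\mathrm{iso}}\subset\bH^1_0(\Omega_h)$ has no jump across interior faces. Hence, for $F\in\calFhI$ shared by $T^\pm$, we may write
$$
\jump{\bv_h}=\jump{\bv_h-\Eh\bv_h}=(\bv_h-\Eh\bv_h)^+|_F-(\bv_h-\Eh\bv_h)^-|_F .
$$
It therefore suffices to bound $\|(\bv_h-\Eh\bv_h)|_{T}\|_{L^2(F)}$ for each $T\in\omega_F$ by $h_T^{3/2}\|\nabla\bv_h\|_{L^2(T)}$, and then use $h_F\approx h_T$, which holds by shape regularity of $\tcalTh$.

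For the face bound I will use a trace inequality on the curved element. Setting $\bw:=\bv_h-\Eh\bv_h$ on $T$, the scaled trace inequality reads
$$
\|\bw\|_{L^2(F)}^2\lesssim h_T^{-1}\|\bw\|_{L^2(T)}^2+h_T\|\nabla\bw\|_{L^2(T)}^2 .
$$
This holds for $\bw\in\bH^1(T)$ on isoparametric elements. To justify it, pull back to $\hT$ via $\Fhk$, apply the trace theorem on $\hT$, and push forward using Lemma~\ref{lem::NormInThT} with $p=2$ and $m=0,1$. The surface Jacobian of $\Fhk|_{\hF}$ is $\approx h_T^2$ by \eqref{Prop-FT}, since the tangential Jacobian is $O(h_T)$ and nondegenerate. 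Note that $\bw|_T\in\bH^1(T)$, because $\bV(T)$ is the Piola image of $\bH^1$ functions under a smooth map and $\Eh\bv_h$ is isoparametric. Thus no subdivision into $T^r$ is needed for the trace step.

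Inserting Lemma~\ref{lemma:enrichment-estimate} with $m=0$ gives $\|\bw\|_{L^2(T)}\lesssim h_T^2\|\nabla\bv_h\|_{L^2(T)}$, and with $m=1$ it gives $\|\nabla\bw\|_{L^2(T)}\lesssim h_T\|\nabla\bv_h\|_{L^2(T)}$. Consequently,
$$
\|\bw\|_{L^2(F)}^2\lesssim h_T^{3}\|\nabla\bv_h\|_{L^2(T)}^2 ,
$$
that is, $\|\bw\|_{L^2(F)}\lesssim h_T^{3/2}\|\nabla\bv_h\|_{L^2(T)}$. Multiplying by $h_F^{-1/2}$, summing over $T\in\omega_F$, and using $h_T\approx h_F$ yields \eqref{estimate:local-jump}. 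If $T$ is affine, $\bw=0$ and there is nothing to prove.

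The main obstacle is making the trace inequality rigorous on a curved element with the correct powers of $h_T$. This requires controlling $\mu_\phi$ for $\phi=\Fhk|_{\hF}$ from above and below by $h_T^2$. I would derive these bounds from \eqref{Prop-FT:eq1} and \eqref{Prop-FT:eq2}: the singular values of $D\Fhk$ are all $\approx h_T$, because $|D\Fhk|\lesssim h_T$ and $J_{\Fhk}\approx h_T^3$. It then follows that the area element on $\hF$ scales like $h_T^2$. The remaining step is to verify $h_F\approx h_T$ for both neighbours, which follows from shape regularity of the affine mesh.
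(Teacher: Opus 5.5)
Your proposal is correct and follows the paper's proof exactly: you rewrite $\jump{\bv_h}$ as $\jump{\bv_h-\Eh\bv_h}$ using $\Eh\bv_h\in\bH^1_0(\Omega_h)$, then combine a scaled trace inequality on each $T\in\omega_F$ with Lemma~\ref{lemma:enrichment-estimate}. Your extra justification fills in details the paper leaves implicit, namely the trace inequality on curved elements (surface Jacobian $\approx h_T^2$ from the singular values of $D\Fhk$), $\bv_h-\Eh\bv_h\in\bH^1(T)$, and $h_F\approx h_T$.
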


\begin{proof}
Since $\Eh\bv_h\in\bH_0^1(\Omega_h)$, $\jump{\bv_h} = \jump{\bv_h-\Eh\bv_h}$ holds for each $F\in\calFhIc$.
The trace inequality and Lemma~\ref{lemma:enrichment-estimate} yield
$$
\begin{aligned}
h_F^{-1/2} \|\jump{\bv_h}\|_{L^2(F)}\lesssim \sum_{T\in\omega_F}\sum_{m=0,1}
h_T^{m-1}\|\bv_h-\Eh\bv_h\|_{H^m(T)}
\lesssim h_F \sum_{T\in\omega_F} \|\nabla\bv_h\|_{L^2(T)}, 
\end{aligned}
$$
which completes the proof.
\end{proof}

Define the broken $H^1$-seminorm
$$
\|\bv_h\|_{1,h}^2
:=
\sum_{T\in\calTh}
\|\nabla\bv_h\|_{L^2(T)}^2.
$$
Lemma~\ref{lemma:enrichment-estimate} and the Poincar\'e inequality for $\Eh\bv_h\in\bH_0^1(\Omega_h)$ give
\begin{equation}\label{eq:discrete-Poincare}
\|\bv_h\|_{L^2(\Omega_h)} \lesssim
\|\bv_h-\bE_h\bv_h\|_{L^2(\Omega_h)} + \|\bE_h\bv_h\|_{L^2(\Omega_h)}
\lesssim \|\bv_h\|_{1,h},    
\end{equation}
for all $\bv_h \in \bV_h$.
This implies that $\|\cdot\|_{1,h}$ is a norm in $\bV_h$.

\subsection{Global inf-sup stability}
This subsection establishes the global inf-sup condition by combining the local stability result in Lemma~\ref{lem:local-stability} with Stenberg's macroelement technique. Throughout this subsection, $h$ is assumed sufficiently small.

Define the spaces of piecewise constants with respect to $\tcalTh$ and $\calTh$ by
$$
\begin{aligned}
\tY_h:&=\{\tq_h\in L^2_0(\tOmega_h):\, \tq_h|_{\tT}\in P_0(\tT)\,\text{ for all }\tT\in\tcalTh\}\subset\tQ_h,\\
Y_h:&=\{q_h:\, q_h=\Gh\tq_h,~\tq_h\in\tY_h\}\subset Q_h,
\end{aligned}
$$
Suitable face-bubble functions on the curved interior faces in the following lemma are used to prove the stability of $\bV_h\times Y_h$.

\begin{lemma}\label{lem:face-bubbles}
For $F\in\calFhI$, there is a face bubble function $\bb_F\in\bV_h$, supported on the union of two elements in $\omega_F$ and vanishing on every face in $\calFh\setminus\{F\}$, such that
\begin{equation}\label{eq:face-bubble-properties}
\int_F\bb_F\cdot\bn_F\ds=1
\quad
\text{and}
\quad
\|\bb_F\|_{1,h}\lesssim h_F^{-3/2}.
\end{equation}
\end{lemma}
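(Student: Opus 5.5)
The plan is to build $\bb_F$ on the affine mesh, transport it with $\Sh$, and then normalize. Let $\tT^\pm=\Ftkh^{-1}(T^\pm)$ and $\tF=\Ftkh^{-1}(F)$. On $\tcalTh$, choose a piecewise-polynomial field $\tbw\in\tbV_h$ supported in $\overline{\tT^+}\cup\overline{\tT^-}$ and vanishing on every face except $\tF$. For instance, take $\tbw:=\tilde\lambda_1\tilde\lambda_2\tilde\lambda_3\,\tilde{\bn}_{\tF}$, where the $\tilde\lambda_i$ are the barycentric coordinates of the vertices of $\tF$ (on each of $\tT^\pm$). This is a global $H^1_0$ function of degree $3\le k$, it is piecewise polynomial on the Alfeld splits, and hence it belongs to $\tbV_h$. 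It vanishes on $\partial(\tT^+\cup\tT^-)$ and satisfies $\tbw\cdot\tilde\bn_{\tF}\ge0$ on $\tF$, with
\[
\int_{\tF}\tbw\cdot\tilde\bn_{\tF}\,\tds\approx h_F^2
\quad\text{and}\quad
\|\nabla\tbw\|_{L^2(\tT^\pm)}\approx h_F^{1/2}.
\]
Then set $\bw:=\Sh\tbw\in\bV_h$. Because $\Sh$ acts through nodal values, $\bw$ vanishes at every node not lying on $\tT^\pm$ and not interior to $\tF$. By unisolvence (Lemma~\ref{lm:dofs-VT}), $\bw$ vanishes outside $T^+\cup T^-$. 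On each face of $T^\pm$ other than $F$, the pulled-back trace is a polynomial that vanishes at all face nodes, so $\bw$ vanishes there too. The face argument in the proof of Theorem~\ref{thm:Hdiv-conformity} gives exactly this trace statement.

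The key step is a lower bound $\int_F\bw\cdot\bn_F\ds\gtrsim h_F^2$ together with $\|\bw\|_{1,h}\lesssim h_F^{1/2}$. Normalizing, $\bb_F:=\bw/\int_F\bw\cdot\bn_F\ds$, then gives \eqref{eq:face-bubble-properties}. For the flux, set $\tbv^+:=\mathcal R^V_{\Psi_{T^+}}\bw|_{T^+}\in\tbV(\tT^+)$. By \eqref{eq:face-equation-AT} and \eqref{eq:def-surface-mu},
\[
\int_F\bw\cdot\bn_F\ds=\int_{\tF}\tbv^+\cdot\tilde\bn_{\tF}\,\tds .
\]
The difference $\tbv^+-\tbw$ lies in the piecewise polynomial space $\tbV(\tT^+)$. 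Its nodal values are $\big(A_{\Psi_{T^+}}^{-1}(\ta)-\bI\big)\tbw(\ta)$, since $\bw(a)=\tbw(\ta)$.

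The main obstacle is to show that this perturbation is small. From \eqref{eq:Prop-Ftk}, $|D\Psi_T-\bI|\lesssim h_T$ and $|J_{\Psi_T}-1|\lesssim h_T$, so $|A^{-1}_{\Psi_T}-\bI|\lesssim h_T$. Finite-dimensional norm equivalence on the reference element, after scaling, then gives
\[
\|\tbv^+-\tbw\|_{L^\infty(\tF)}\lesssim h_T\,\|\tbw\|_{L^\infty}\lesssim h_F .
\]
With $\|\tbw\|_{L^\infty}=O(1)$ and $|\tF|\approx h_F^2$, the flux becomes $\int_{\tF}\tbw\cdot\tilde\bn\,\tds+O(h_F^3)\gtrsim h_F^2$ for $h$ small. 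I would check this step carefully, since it is the only place where smallness of $h$ enters.

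For the energy bound, apply Lemma~\ref{lemma:enrichment-estimate} on each $T^\pm$. Since $\Eh\bw=\tbw\circ\Ftkh^{-1}$, the lemma yields $\|\nabla\bw\|_{L^2(T)}\lesssim\|\nabla(\Eh\bw)\|_{L^2(T)}$. The bounds \eqref{eq:Prop-Ftk} then give $\|\nabla(\Eh\bw)\|_{L^2(T)}\lesssim\|\nabla\tbw\|_{L^2(\tT)}\lesssim h_F^{1/2}$. Dividing by the flux, which is $\gtrsim h_F^2$, gives $\|\bb_F\|_{1,h}\lesssim h_F^{1/2-2}=h_F^{-3/2}$, and the flux of $\bb_F$ equals $1$.
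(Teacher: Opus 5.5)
Your proof is correct, but it follows a different route from the paper.

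\textbf{The paper's construction.} The paper builds the bubble directly on a curved element $T\in\omega_F$ for which $\bn_F$ is outward. It prescribes the nodal values $\bb_F(a)=\hat b(\hat a)\bn_F(a)/\mu_{\phi_F}(\hat a)$ at the nodes on $F$ and zero at all other nodes. Here $\phi_F=\Fhk|_{\hF}$, and $\hat b$ is the cubic bubble on $\hF$ normalized to unit integral. By the normal-trace identity \eqref{eq:face-equation-AT}, the pulled-back normal trace $\Rhkv(\bb_F|_T)\cdot\hbn_{\hF}\in P_k(\hF)$ agrees with $\hat b$ at all Lagrange nodes of $\hF$. Hence it equals $\hat b$, and the flux is exactly $1$. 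The bound $\|\bb_F\|_{1,h}\lesssim h_F^{-3/2}$ then follows by plain scaling: $\mu_{\phi_F}\approx h_F^2$ and $\|A_{\Fhk}^{-1}\|_{L^\infty(\hT)}\lesssim h_T^2$ make the reference nodal values $\mathcal{O}(1)$.

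\textbf{Your construction.} You transport the affine bubble $\tilde\lambda_1\tilde\lambda_2\tilde\lambda_3\tbn_{\tF}$ with $\Sh$ and normalize afterwards. This needs two extra ingredients:
\begin{itemize}
\item a lower bound on the flux of $\Sh\tbw$, which you get from the perturbation estimate $|A_{\Psi_T}^{-1}-\bI|\lesssim h_T$ (an $\mathcal{O}(h_F^3)$ error is absorbed into the $\approx h_F^2$ main term for small $h$);
\item Lemma~\ref{lemma:enrichment-estimate} for the energy bound.
\end{itemize}

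\textbf{Comparison.} The paper's $1/\mu_{\phi_F}$ scaling gives an explicit, directly implementable formula with exactly unit flux and no perturbation step. Your version avoids the surface Jacobian and the curved normal in the construction, and reuses the $\Sh$/$\Eh$ machinery already in place. The price is a smallness assumption on $h$, which is in force throughout this subsection anyway.

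\textbf{A minor point.} The argument in Theorem~\ref{thm:Hdiv-conformity} concerns only normal components. The vanishing of the full trace of $\Sh\tbw$ on the other faces of $T^\pm$ instead follows from face unisolvence applied componentwise to $\mathcal R^V_{\Psi_{T^\pm}}(\Sh\tbw)$, together with the invertibility of $A_{\Psi_{T^\pm}}$.
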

\begin{proof}
Given $F\in\calFhI$, let $T\in\omega_F$ such that $\bn_F$ is outward to $T$. Set $\hat F:=\Fhk^{-1}(F)\subset\partial\hT$ and $\phi_F:=\Fhk|_{\hat F}:\hF\to F$.
Let $\hat\lambda_1,\hat\lambda_2,\hat\lambda_3$ be the barycentric coordinates on $\hat F$, and define
$\hat b:=\hat\lambda_1\hat\lambda_2\hat\lambda_3/(\int_{\hat F}\hat\lambda_1\hat\lambda_2\hat\lambda_3\hds).$ Define $\bb_F\in\bV_h$ by
$$
\bb_F(a)=
\hat b(\hat a)\bn_F(a)/\mu_{\phi_F}(\hat a),\quad
a\in F\cap\NT,\quad\hat a=\phi_F^{-1}(a),
$$
and $\bb_F(a)=0$ for $a\in\big(\bigcup_{T\in\calTh}\NT\big)\setminus F$.
Thus $\bb_F$ is supported on the union of two elements in $\omega_F$, and vanishes on every face in $\calFh\setminus\{F\}$.
Set $\hat{\bb}_{\hF}:=\Rhkv(\bb_F|_T)\in\hbV(\hT)$.
By the normal-trace identity \eqref{eq:face-equation-AT}, $\hat{\bb}_{\hF}|_{\hF}\cdot\hat{\bn}_{\hF}\in P_k(\hat F)$ has the same nodal values as $\hat b\in P_3(\hat F)\subset P_k(\hat F)$.
Unisolvence and the normal-flux preservation of the Piola transform yield
$$\int_F\bb_F\cdot\bn_F\ds=\int_{\hat F}\hat{\bb}_{\hF}\cdot\hat{\bn}_{\hF}\,\hds=\int_{\hat F}{\hat b}\,\hds=1.$$
Since $\mu_{\phi_F}\approx h_F^2$ and
$\|A_{\Fhk}^{-1}\|_{L^\infty(\hT)}\lesssim h_T^2$, the nodal values of $\hat{\bb}_{\hF}$ are bounded. It follows from the finite-dimensional norm equivalence and Lemma~\ref{lem::normH1XThXT} that $|\bb_F|_{H^1(T)}\lesssim h_T^{-3/2}\|\hat\bb_{\hF}\|_{H^1(\hT)}\lesssim h_T^{-3/2}\sum_{\hat{a}\in\hNT}|\hat{\bb}_{\hF}(\hat a)|\lesssim h_F^{-3/2}$. The same scaling arguments apply to the other element in $\omega_F$ and complete the proof of \eqref{eq:face-bubble-properties}.
\end{proof}

\begin{lemma}
\label{lem:stability-Vh-Yh}
Let $k\geq3$. For any $q_h\in Y_h$, there exists $\bv_h\in\bV_h$, such that
\begin{equation}
\label{eq:Yh-stability}
(\Div\bv_h,q_h)_{\Omega_h}\gtrsim \|q_h\|_{L^2(\Omega_h)}^2
\quad\text{ and }\quad
\|\bv_h\|_{1,h}\lesssim\|q_h\|_{L^2(\Omega_h)}.
\end{equation}
\end{lemma}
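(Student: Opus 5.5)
The plan is to use the classical piecewise-constant construction. A continuous right inverse of the divergence gives $\bw\in\bH_0^1(\Omega_h)$, and we build $\bv_h$ from face bubbles whose normal fluxes match those of $\bw$. Given $q_h\in Y_h$, note that $q_h\in L^2_0(\Omega_h)$: $q_h=\Gh\tq_h$ with $\tq_h$ piecewise constant, and composition preserves piecewise constants. (I will check that the mean-zero property transfers to $\Omega_h$. If it does not, I will subtract the mean, which is $O(h)$-small by \eqref{eq:Prop-Ftk-eq2}, or work with $Y_h$ defined as having zero mean.) The Bogovski\u{\i}/Ne\v{c}as result on the Lipschitz domain $\Omega_h$, with constant uniform in $h$ because $\Omega_h$ is a uniformly Lipschitz perturbation of $\Omega$, gives $\bw\in\bH_0^1(\Omega_h)$ with $\Div\bw=q_h$ and $\|\bw\|_{H^1(\Omega_h)}\lesssim\|q_h\|_{L^2(\Omega_h)}$. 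Then define
$$
\bv_h:=\sum_{F\in\calFhI}\Big(\int_F\bw\cdot\bn_F\ds\Big)\bb_F,
$$
with $\bb_F$ from Lemma~\ref{lem:face-bubbles}.

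First, the divergence identity. Since $\bv_h\in\bH_0(\Div;\Omega_h)$ by Theorem~\ref{thm:Hdiv-conformity} and $q_h$ is constant on each $T$, integrating by parts elementwise gives
$$
(\Div\bv_h,q_h)_{\Omega_h}=\sum_{T}q_h|_T\int_{\partial T}\bv_h\cdot\bn_T\ds.
$$
Each $\bb_{F'}$ vanishes on faces other than $F'$ and has unit flux, so $\int_F\bv_h\cdot\bn_F\ds=\int_F\bw\cdot\bn_F\ds$ for every interior face. On boundary faces both $\bv_h$ and $\bw$ vanish. Hence
$$
(\Div\bv_h,q_h)_{\Omega_h}=\sum_T q_h|_T\int_{\partial T}\bw\cdot\bn_T\ds=(\Div\bw,q_h)_{\Omega_h}=\|q_h\|^2_{L^2(\Omega_h)}.
$$
This step is exact and needs no quadrature argument, because the flux matching holds on the curved faces themselves.

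Second, the stability bound. Each $\bb_F$ is supported on $\omega_F$, so finite overlap gives
$$
\|\bv_h\|_{1,h}^2\lesssim\sum_F\Big|\int_F\bw\cdot\bn_F\ds\Big|^2h_F^{-3}.
$$
By Cauchy--Schwarz and $|F|\approx h_F^2$, we have $|\int_F\bw\cdot\bn_F|^2\lesssim h_F^2\|\bw\|_{L^2(F)}^2$. The scaled trace inequality on curved elements (via \eqref{Prop-FT} and Lemma~\ref{lem::NormInThT}) gives $\|\bw\|^2_{L^2(F)}\lesssim h_F^{-1}\|\bw\|_{L^2(T)}^2+h_F\|\nabla\bw\|_{L^2(T)}^2$. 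Together these give a term $h_F^{-2}\|\bw\|^2_{L^2(T)}$, which is too large. The standard remedy is to replace $\bw$ by a quasi-interpolant. I will take a Scott--Zhang-type interpolant $\bw_I\in\bV_h^{\mathrm{iso}}\subset\bH_0^1(\Omega_h)$ of $\bw$, built on the isoparametric mesh (via $\Ftkh$ from the affine Scott--Zhang operator), with local bounds $\|\bw-\bw_I\|_{L^2(T)}\lesssim h_T\|\nabla\bw\|_{L^2(\omega_T)}$ and $\|\nabla\bw_I\|_{L^2(T)}\lesssim\|\nabla\bw\|_{L^2(\omega_T)}$.

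Then I split $\bv_h=\bv_h^{(1)}+\bv_h^{(2)}$, where $\bv_h^{(1)}$ carries the fluxes of $\bw-\bw_I$ through the bubbles and $\bv_h^{(2)}:=\Sh(\bw_I\circ\Ftkh)$. The latter means the element of $\bV_h$ sharing nodal values with $\bw_I$, that is, $\Eh^{-1}\bw_I$. For $\bv_h^{(2)}$, Lemma~\ref{lemma:enrichment-estimate} gives $\|\bv_h^{(2)}\|_{1,h}\lesssim\|\nabla\bw_I\|\lesssim\|q_h\|$. Its fluxes, however, differ from those of $\bw_I$ by the jump $\bv_h^{(2)}-\bw_I$. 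To fix this, I add bubbles carrying the flux defect
$$
\int_F(\bw-\bv_h^{(2)})\cdot\bn_F\ds,
$$
which is well-defined since $\bv_h^{(2)}$ has single-valued normal trace. This defect equals the flux of $\bw-\bw_I$ plus the flux of $\bw_I-\bv_h^{(2)}$. By Lemma~\ref{lemma:enrichment-estimate}, the second part is bounded in $L^2(F)$ by $h_F^{3/2}\|\nabla\bw_I\|$, which is even smaller than needed. With the trace inequality, each defect squared is $\lesssim h_F^3\|\nabla\bw\|^2_{L^2(\omega)}$. Multiplying by $h_F^{-3}$ and summing gives $\|\bv_h\|_{1,h}\lesssim\|\nabla\bw\|\lesssim\|q_h\|$.

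The main obstacle is this flux bookkeeping. The argument needs the corrected $\bv_h=\bv_h^{(2)}+\sum_F c_F\bb_F$ to reproduce exactly the fluxes of $\bw$, and it needs all defects to scale like $h_F^{3/2}\|\nabla\bw\|$. It also needs the uniform Ne\v{c}as constant on $\Omega_h$ and the zero-mean subtlety of $Y_h$ to be handled consistently. If $Y_h$ is not mean-zero on $\Omega_h$, I will use the fact that constants are annihilated because $\int_{\Omega_h}\Div\bv_h=0$, and project $q_h$ onto $L^2_0$ before applying the argument.
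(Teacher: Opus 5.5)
Your argument is correct, but it takes a different route from the paper's.

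The paper never solves a divergence problem on $\Omega_h$. It works with $\tq_h=\Gh^{-1}q_h$, which is exactly mean-zero on $\tOmega_h$, and takes $\tbw\in\bH_0^1(\tOmega_h)$ with $\tDiv\tbw=\tq_h$. The Bernardi--Raugel Fortin construction on the affine mesh then gives $\tbv_h\in\tbV_h$ whose fluxes through the flat faces $\tF$ equal those of $\tbw$. Finally it sets $\bv_h=\Sh\tbv_h+\sum_F d_F\bb_F$. Here $d_F$ only repairs the mismatch between the flat flux of $\tbv_h$ and the curved flux of $\Sh\tbv_h$. That mismatch is small because $\|\bI-J_{\Ftk}(D\Ftk)^{-\trans}\|_{L^\infty(\tT)}\lesssim h_T$ and by Lemma~\ref{lemma:enrichment-estimate}. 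As a result, $(\Div\bv_h,q_h)_{\Omega_h}=\|\tq_h\|^2_{L^2(\tOmega_h)}$ holds exactly, with no mean correction.

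You instead solve on $\Omega_h$ directly. You replace the affine Fortin operator by a Scott--Zhang interpolant $\bw_I\in\bV_h^{\mathrm{iso}}$, transferred to $\bV_h$ by $\Eh^{-1}$. The curved bubbles then absorb the whole defect $c_F=\int_F(\bw-\Eh^{-1}\bw_I)\cdot\bn_F\ds$. In effect you build a Fortin operator on the curved mesh itself. This is more self-contained: it does not rely on the internal structure of the Bernardi--Raugel proof or on the affine-versus-curved Jacobian comparison. Both routes need a uniform Ne\v{c}as constant (on $\tOmega_h$ resp.\ $\Omega_h$), and the same perturbation result covers both.

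In your route the mean issue is real, so commit to the fallback. In general $\int_{\Omega_h}q_h\dx=\int_{\tOmega_h}\tq_h(J_{\Ftkh}-1)\,\rd\tx\neq0$. However, by \eqref{eq:Prop-Ftk-eq2} the mean $\bar q$ satisfies $\|\bar q\|_{L^2(\Omega_h)}\lesssim h\|q_h\|_{L^2(\Omega_h)}$. Solving $\Div\bw=q_h-\bar q$ then gives $(\Div\bv_h,q_h)_{\Omega_h}=\|q_h\|^2_{L^2(\Omega_h)}-\|\bar q\|^2_{L^2(\Omega_h)}\gtrsim\|q_h\|^2_{L^2(\Omega_h)}$ for small $h$.

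Also, the intermediate $\bv_h^{(1)}$ in your write-up is superfluous. The function you actually use is $\bv_h=\Eh^{-1}\bw_I+\sum_F c_F\bb_F$. Your local bound $|c_F|\lesssim h_F^{3/2}\|\nabla\bw\|$ over a patch around $F$, together with $\|\bb_F\|_{1,h}\lesssim h_F^{-3/2}$ and finite overlap, yields the stability estimate.
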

\begin{proof}
Let $q_h=\Gh\tq_h\in Y_h$ with $\tq_h\in\tY_h$.
By \eqref{eq:Prop-Ftk}, $\|q_h\|_{L^2(\Omega_h)}\approx\|\tq_h\|_{L^2(\tOmega_h)}.$
By \cite{girault1979finite}, there exists $\tbw\in\bH^1_0(\tOmega_h)$ such that $\tDiv\tbw=\tq_h$ and $\|\tbw\|_{H^1(\tOmega_h)}\lesssim \|\tq_h\|_{L^2(\tOmega_h)}$.
The hidden constant is independent of $h$ by \cite[Theorem~4.4]{bernardi2016contiuity} and the geometric estimates for the mapping $\Theta_h\circ\Ftkh:\tOmega_h\to\Omega$.
Since $k\geq3$, $\tbV_h$ contains the Bernardi--Raugel space \cite{bernardi1985analysis}, which forms an inf-sup stable pair with the piecewise constant pressure space.
The proof of this stability result shows that there exists $\tbv_h\in\tbV_h$ such that the integral of the normal component of $\tbv_h$ over each face of $\tcalTh$ equals that of $\tbw$, and the $H^1$-norm of $\tbv_h$ is uniformly controlled by that of $\tbw$.
As $\tq_h$ is piecewise constant, the divergence theorem gives
\begin{subequations}\label{eq:affine-Yh-stability}
\begin{align}
(\tDiv\tbv_h,\tq_h)_{\tOmega_h}&=(\tDiv\tbw,\tq_h)_{\tOmega_h}=\|\tq_h\|_{L^2(\tOmega_h)}^2\gtrsim\|q_h\|_{L^2(\Omega_h)}^2,\label{eq:affine-Yh-stability-a}\\
\|\tbv_h\|_{H^1(\tOmega_h)}
&\lesssim\|\tbw\|_{H^1(\tOmega_h)}\lesssim\|\tq_h\|_{L^2(\tOmega_h)}\lesssim\|q_h\|_{L^2(\Omega_h)}.\label{eq:affine-Yh-stability-b} 
\end{align}
\end{subequations}
For each $F\in\calFhI$, set $\tF:=\Ftkh^{-1}(F)$ and choose $\tbn_{\tF}$ with orientation compatible with $\bn_F$. Let $\bb_F$ be the face bubble function in Lemma~\ref{lem:face-bubbles}. Define $\bv_h\in\bV_h$ as
$$
\bv_h:=\Sh\tbv_h+\sum_{F\in\calFhI}d_F\bb_F \quad\text{with}\quad
d_F:=\int_{\tF}\tbv_h\cdot\tbn_{\tF}\,\tds-\int_F\Sh\tbv_h\cdot\bn_F\,\ds.
$$
By \eqref{eq:face-bubble-properties} and $\bb_F$ vanishing on all the other faces,
$\int_F\bv_h\cdot\bn_F\ds=\int_{\tF}\tbv_h\cdot\tbn_{\tF}\tds$ holds for all $F\in\calFh$.
Since $q_h$ and $\tq_h$ are constant with the same value on corresponding elements, integration by parts on each element and \eqref{eq:affine-Yh-stability-a} give
\begin{equation}\label{eq:divvh-control-qh-0}
(\Div\bv_h,q_h)_{\Omega_h}
=(\tDiv\tbv_h,\tq_h)_{\tOmega_h}=\|\tq_h\|_{L^2(\tOmega_h)}^2
\gtrsim\|q_h\|_{L^2(\Omega_h)}^2,
\end{equation}
which proves the first inequality of \eqref{eq:Yh-stability}.
Set $\bw_h:=\Eh(\Sh\tbv_h)=\tbv_h\circ\Ftkh^{-1}$ and $\boldsymbol e_h:=\bw_h-\Sh\tbv_h$. The estimates for $\Ftk$ in \eqref{eq:Prop-Ftk} give $\|\bw_h\|_{H^1(T)}\approx\|\tbv_h\|_{H^1(\tT)}$. Together with Lemma~\ref{lemma:enrichment-estimate} and the triangle inequality, this yields
\begin{align}
\|\boldsymbol e_h\|_{L^2(T)}
+h_T\|\boldsymbol e_h\|_{H^1(T)}
&\lesssim h_T^2\|\nabla\tbv_h\|_{L^2(\tT)},
\label{eq:bubbleScal0}\\
\|\Sh\tbv_h\|_{1,h}
&\lesssim\|\tbv_h\|_{H^1(\tOmega_h)}.
\label{eq:bubbleScal1}
\end{align}
Fix $F\in\calFhI$ and $T\in\omega_F$.
Since $\bw_h\circ\Ftk=\tbv_h$ and
$\boldsymbol e_h=\bw_h-\Sh\tbv_h$, a change of variables gives
$d_F=\int_{\tF}\tbv_h\cdot\bigl(\bI-J_{\Ftk}(D\Ftk)^{-\trans}\bigr) \tbn_{\tF}\,\tds+\int_F\boldsymbol e_h\cdot\bn_F\,\ds.$
The estimates in \eqref{eq:Prop-Ftk} imply
$\|\bI-J_{\Ftk}D\Ftk^{-\trans}\|_{L^\infty(\tT)}\lesssim h_T$.
Using the Cauchy--Schwarz inequality, the trace inequality, the inverse estimate, and
\eqref{eq:bubbleScal0} give
$$
\begin{aligned}
|d_F|&\lesssim h_F^2\|\tbv_h\|_{L^2(\tF)}+h_F\|\boldsymbol e_h\|_{L^2(F)}\\
&\lesssim 
h_F^{3/2}\|\tbv_h\|_{L^2(\tT)}+h_F^{1/2}
\bigl(\|\boldsymbol e_h\|_{L^2(T)}+h_T\|\nabla\boldsymbol e_h\|_{L^2(T)}
\bigr)\lesssim h_F^{3/2}\|\tbv_h\|_{L^2(\tT)}.
\end{aligned}
$$
Combining \eqref{eq:face-bubble-properties}, the finite overlap of the
face patches, \eqref{eq:bubbleScal1}, and \eqref{eq:affine-Yh-stability-b} gives
$$
\begin{aligned}
\|\bv_h\|_{1,h}
\lesssim\|\Sh\tbv_h\|_{1,h}
+\Bigl(\sum_{F\in\calFhI}h_F^{-3}|d_F|^2\Bigr)^{1/2}
\lesssim\|\tbv_h\|_{H^1(\tOmega_h)}
\lesssim\|q_h\|_{L^2(\Omega_h)}.
\end{aligned}
$$
Together with \eqref{eq:divvh-control-qh-0}, this completes the proof.
\end{proof}
The local stability in Lemma~\ref{lem:local-stability} and the stability of $\bV_h\times Y_h$ in Lemma~\ref{lem:stability-Vh-Yh}
yield the following global inf-sup condition. 
Similar arguments can be found in \cite{neilan2021divergence}.

\begin{theorem}
\label{theorem:global-infsup}
There exists a constant $\beta>0$, independent of
$h$, such that
\begin{equation}
\label{eq:global-infsup}
\inf_{q_h\in Q_h\setminus\{0\}}
\sup_{\bv_h\in\bV_h\setminus\{\bzero\}}
\frac{(q_h,\Div\bv_h)_{\Omega_h}}
{\|q_h\|_{L^2(\Omega_h)}\|\bv_h\|_{1,h}}
\geq\beta.
\end{equation}
\end{theorem}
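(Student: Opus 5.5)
We prove the theorem with the standard macroelement argument: split an arbitrary $q_h\in Q_h$ into a piecewise constant part and locally mean-free parts, control the first by Lemma~\ref{lem:stability-Vh-Yh} and the rest by Lemma~\ref{lem:local-stability}, and then combine the two test functions.

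\textbf{Decomposition.} Given $q_h=\Gh\tq_h\in Q_h$, we define the constant $\bar q_T$ on each $T$ by
\[
\bar q_T := \int_T \frac{q_h}{J_{\Fhk}\circ\Fhk^{-1}}\dx \Big/ \int_T \frac{1}{J_{\Fhk}\circ\Fhk^{-1}}\dx ,
\]
which is the weighted mean for which $q_h|_T-\bar q_T\in Q_0(T)$. We do not use the plain mean, because $Q_0(T)$ is characterized by this weighted mean condition.

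The piecewise constant function $\bar q$ need not have zero mean on $\Omega_h$. So we set $q^0:=\bar q-c$ with a global constant $c$ chosen so that $q^0\in Y_h$; the function $\Gh^{-1}q^0$ is then piecewise constant with zero mean on $\tOmega_h$. Concretely, $c$ is the $\tOmega_h$-mean of the piecewise constant $\tilde{\bar q}$ defined by the same element values. We write $q_h=q^0+q^1$, where $q^1|_T=(q_h-\bar q_T)+c$. The mismatch caused by $c$ is handled as follows. Since $\int_{\Omega_h}q_h=0$, the weight $1/J\circ\Fhk^{-1}$ equals $h_T^{-3}$ times a factor $1+\mathcal O(h)$ (by \eqref{eq:Prop-Ftk} and the affine scaling), and $J_{\Ftk}=1+\mathcal O(h)$, we expect $|c|\,|\Omega_h|^{1/2}\lesssim h\|q_h\|$. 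We then fold $c$ back into $q^0$: define $q^1_T:=q_h-\bar q_T\in Q_0(T)$ and $q^0:=\bar q$, and when testing, use $\bv_h^{0}$ built from $\bar q-c\in Y_h$. Because $(\Div\bv,c)_{\Omega_h}=0$ for every $\bv\in\bV_h$ by Theorem~\ref{thm:Hdiv-conformity}, the constant $c$ drops out of all pairings with divergences. Finally, $\|\bar q - c\|\le\|\bar q\|$ by orthogonality up to the $(1+\mathcal O(h))$ equivalence, and $\|\bar q\|+\|q^1\|\lesssim\|q_h\|$ by the weighted $L^2$-stability of the weighted mean.

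\textbf{Local part.} For each $T$, Lemma~\ref{lem:local-stability} gives $\bv_T\in\bV_0(T)$ with $\Div\bv_T=h_T^3q^1_T/(J\circ\Fhk^{-1})$. Extending by zero keeps $\bv_T$ in $\bV_h$. Set $\bv^1:=\sum_T\bv_T$. Then
\[
(\Div\bv^1,q_h)_T=\int_T h_T^3\,\frac{q^1_T(q^1_T+\bar q_T)}{J\circ\Fhk^{-1}}=\int_T \frac{h_T^3}{J\circ\Fhk^{-1}}\,|q^1_T|^2\gtrsim\|q^1_T\|_{L^2(T)}^2 ,
\]
since the weighted mean of $q^1_T$ vanishes. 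The same computation gives $(\Div\bv^1,\bar q)_{\Omega_h}=0$. Moreover $\|\bv^1\|_{1,h}\lesssim\|q^1\|$.

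\textbf{Constant part and combination.} Lemma~\ref{lem:stability-Vh-Yh} applied to $\bar q-c$ gives $\bv^0$ with $(\Div\bv^0,\bar q)=(\Div\bv^0,\bar q-c)\ge C_1\|\bar q-c\|^2$ and $\|\bv^0\|_{1,h}\lesssim\|\bar q\|$. Testing $\bv^0$ against $q^1$ produces a cross term bounded by $C_2\|\bar q\|\,\|q^1\|$. We take $\bv_h=\bv^0+\delta^{-1}\bv^1$ with $\delta$ small and absorb this cross term by Young's inequality. The remaining step is to control $\|\bar q\|$ by $\|\bar q-c\|+|c|$ and to absorb $|c|\lesssim h\|q_h\|$ for $h$ small. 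This is the step we expect to be the main obstacle: because the natural weighted means are not the plain $L^2_0$ projection, the global constant has to be tracked carefully, and we will verify the estimate $|c|\lesssim h\|q_h\|$ via \eqref{eq:Prop-Ftk-eq2}. The result is $(\Div\bv_h,q_h)\gtrsim\|q_h\|^2$ together with $\|\bv_h\|_{1,h}\lesssim\|q_h\|$, which gives \eqref{eq:global-infsup}.
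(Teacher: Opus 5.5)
Your proposal is correct and is essentially the paper's proof. Your weighted mean $\bar q_T$ is exactly the paper's $\bar q_h$, and Lemmas~\ref{lem:local-stability} and~\ref{lem:stability-Vh-Yh} play the same roles. Your explicit test function $\bv^0+\delta^{-1}\bv^1$ with Young's inequality is equivalent to the paper's combination of the two sup-bounds via the triangle inequality.

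The global constant you flag as the main obstacle is in fact $c=0$. Substitute $x=\Fhk(\hat x)$, where $\Fhk=\Ftk\circ\Fht$ and $\Fht$ is affine. This shows that $\bar q_T$ is the plain average of $\Gh^{-1}q_h$ over $\tT$. Since $\Gh^{-1}q_h\in L^2_0(\tOmega_h)$, it follows that $\bar q\in Y_h$ directly, with no correction needed.

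One correction: your premise $\int_{\Omega_h}q_h\dx=0$ is false in general. The definition $Q_h=\Gh\tQ_h$ only gives zero mean of $\Gh^{-1}q_h$ on $\tOmega_h$. The bound $|c|\lesssim h\|q_h\|$ does hold, but only because $c$ vanishes identically, not by the argument you sketched.
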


\begin{proof}
Let $q_h\in Q_h$, and define an elementwise constant
$\bar q_h$ by $\int_T (q_h-\bar q_h)/(J_{\Fhk}\circ\Fhk^{-1})\,\dx=0$ for all $T\in\calTh$.
Then $(q_h-\bar q_h)|_T\in Q_0(T)$.
For each $T\in\calTh$, Lemma~\ref{lem:local-stability}
provides $\bv_T\in\bV_0(T)$ such that
$\Div\bv_T = ({h_T^3(q_h-\bar q_h)})/ ({J_{\Fhk}\circ\Fhk^{-1}})$ and 
$\|\bv_T\|_{H^1(T)} \lesssim\|q_h-\bar q_h\|_{L^2(T)}$.
Set $\bv_0|_T:=\bv_T$. Since $\bv_T=0$ on $\partial T$,
it follows that $\bv_0\in\bV_h$ and
$\|\bv_0\|_{1,h} \lesssim\|q_h-\bar q_h\|_{L^2(\Omega_h)}$.
The divergence theorem and the fact that $\bar q_h$ is
elementwise constant give
$(\Div\bv_0,\bar q_h)_{\Omega_h}=0$.
Therefore, by \eqref{Prop-FT:eq2},
$$
\begin{aligned}
(\Div\bv_0,q_h)_{\Omega_h} =(\Div\bv_0,q_h-\bar q_h)_{\Omega_h} =\sum_{T\in\calTh}
\int_T \frac{h_T^3|q_h-\bar q_h|^2} {J_{\Fhk}\circ\Fhk^{-1}}\,\dx
\gtrsim \|q_h-\bar q_h\|_{L^2(\Omega_h)}^2.
\end{aligned}
$$
Consequently,
\begin{equation}\label{eq:inf-sup1}
\|q_h-\bar q_h\|_{L^2(\Omega_h)}
\lesssim
\sup_{\bv_h\in\bV_h\setminus\{\bzero\}}
\frac{(\Div\bv_h,q_h)_{\Omega_h}}
{\|\bv_h\|_{1,h}}.
\end{equation}
Since $\Gh^{-1}\bar q_h$ consists of the affine element
averages of $\Gh^{-1}q_h$ and $\Gh^{-1}q_h$ has zero mean on
$\tOmega_h$, $\bar q_h\in Y_h$. 
Lemma~\ref{lem:stability-Vh-Yh} and  the
Cauchy--Schwarz inequality yield
$$
\begin{aligned}
\|\bar q_h\|_{L^2(\Omega_h)}
&\lesssim
\sup_{\bv_h\in\bV_h\setminus\{\bzero\}}
\frac{(\Div\bv_h,\bar q_h)_{\Omega_h}}
{\|\bv_h\|_{1,h}}
 \lesssim
\sup_{\bv_h\in\bV_h\setminus\{\bzero\}}
\frac{(\Div\bv_h,q_h)_{\Omega_h}}
{\|\bv_h\|_{1,h}}
+\|q_h-\bar q_h\|_{L^2(\Omega_h)}.\\
\end{aligned}
$$
Combining the two estimates gives
$$
\|q_h\|_{L^2(\Omega_h)}
\leq
\|q_h-\bar q_h\|_{L^2(\Omega_h)}
+\|\bar q_h\|_{L^2(\Omega_h)}
\lesssim
\sup_{\bv_h\in\bV_h\setminus\{\bzero\}}
\frac{(\Div\bv_h,q_h)_{\Omega_h}}
{\|\bv_h\|_{1,h}},
$$
which proves \eqref{eq:global-infsup}.
\end{proof}

\section{Discrete Stokes Method}
\label{sec:discrete-method}
By introducing consistency terms to the bilinear form, this section gives the discrete scheme and establishes its well-posedness and optimal-order error estimates.

\subsection{Discrete scheme}
Assume that the boundary $\partial\Omega$ and the source $\bft$ for the Stokes equation \eqref{eq:continuous-mixed} are sufficiently smooth, such that the solution $(\bu,p)\in \bH^s(\Omega)\times H^{s-1}(\Omega)$ with $s\geq2$,
and can be extended to $\mathbb{R}^3$ \cite{kato2000extension} in a way such that
$(\bu,p)\in \bH^s(\mathbb{R}^3)\times H^{s-1}(\mathbb{R}^3)$ with $\Div \bu=0$,
\begin{equation}
\label{extenUandP}
\|\bu\|_{H^s(\mathbb{R}^3)}\lesssim  \|\bu\|_{H^s(\Omega)},\qquad \|p\|_{H^{s-1}(\mathbb{R}^{3})}\lesssim  \|p\|_{H^{s-1}(\Omega)}.    
\end{equation}
Then, set ${\bft} := -\nu \Delta \bu+ \nabla p\in \bH^{s-2}(\mathbb{R}^3)$, and let ${\bft}_h\in \bL^2(\Omega_h)$ be a computable approximation of ${\bft}|_{\Omega}$.
Let $\bV(h):=\bV_h + \bH^2(\Omega_h)$. For any $\bu_h,\bv_h\in\bV(h)$,~define
$$
a_h(\bu_h,\bv_h)\coloneqq\sum_{T\in\calTh}(\nabla\bu_h,\nabla\bv_h)_T - \sum_{F \in \calFhIc} \int_F \aver{\partial_{\bn_F}\bu_h}  \cdot \jump{\bv_h} +  \aver{\partial_{\bn_F}\bv_h}  \cdot \jump{\bu_h}\rd s.
$$
The discrete scheme for \eqref{eq:continuous-mixed} seeks $(\bu_h,p_h)\in\bV_h\times Q_h$ such that
\begin{subequations}
\label{eq:discrete-stokes}
\begin{align}
\nu a_h(\bu_h,\bv_h) -(p_h, \Div \bv_h)_{\Omega_h}
&=(\bft_h,\bv_h)_{\Omega_h}
&&\text{ for all }\bv_h\in\bV_h,\label{eq:ah-uh-vh}\\
(\Div \bu_h,q_h)_{\Omega_h}&=0
&&\text{ for all } q_h\in Q_h.\label{eq:divuh-qh}
\end{align}
\end{subequations}
For any $\bv\in\bV(h)$, define the semi-norm
\begin{equation*}
\label{eq:augmented-seminorm}
\|\bv\|_{*,h}^2 :=\|\bv\|_{1,h}^2
+\sum_{F\in\calFhI} \left( h_F\|\aver{\partial_{\bn_F}\bv}\|_{L^2(F)}^2 +h_F^{-1}\|\jump{\bv}\|_{L^2(F)}^2
\right).
\end{equation*}
The weighted Cauchy--Schwarz inequality gives
\begin{equation}
\label{estimate:ah-bounded}
|a_h(\bu,\bv)| \lesssim  \|\bu\|_{*,h}\|\bv\|_{*,h}
 \qquad\text{ for all } \bu,\bv\in\bV(h).
\end{equation}
By the inverse trace inequality and the scaling arguments,
\begin{equation}\label{eq:average-bound-H1}
\sum_{F\in\calFhI} h_F \left\|\aver{\partial_{\bn_F}\bv_h}
\right\|_{L^2(F)}^2 \lesssim \|\bv_h\|_{1,h}^2
\qquad \text{ for all }\bv_h\in\bV_h.    
\end{equation}
This and the enhanced jump estimate in Lemma~\ref{corollary:jump-estimate} imply that 
\begin{equation}
\label{eq:equivalence}
\| \bv_h\|_{1,h} \approx \| \bv_h \|_{*,h} \qquad \text{ for all }  \bv_h \in \bV_h.
\end{equation}
\begin{theorem}[Well-posedness]
\label{theorem:well-posedness}
For sufficiently small $h$, the discrete problem \eqref{eq:discrete-stokes} has a
unique solution $(\bu_h,p_h)\in\bV_h\times Q_h$. 
\end{theorem}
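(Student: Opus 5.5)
Since \eqref{eq:discrete-stokes} is a square finite-dimensional linear system, I will use the standard Brezzi theory for saddle-point problems. It suffices to verify three conditions on $\bV_h\times Q_h$ equipped with the norms $\|\cdot\|_{1,h}$ and $\|\cdot\|_{L^2(\Omega_h)}$: continuity of both bilinear forms, coercivity of $a_h$ (on all of $\bV_h$, hence on the discrete kernel), and the inf-sup condition. The inf-sup condition is exactly Theorem~\ref{theorem:global-infsup}. Continuity of $a_h$ on $\bV_h$ follows from \eqref{estimate:ah-bounded} together with the norm equivalence \eqref{eq:equivalence}. Continuity of $(q_h,\Div\bv_h)_{\Omega_h}$ follows from the pointwise bound $|\Div\bv_h|\lesssim|\nabla\bv_h|$ and the Cauchy--Schwarz inequality.

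The main step is coercivity, and I will prove it on the whole of $\bV_h$. For $\bv_h\in\bV_h$,
$$
a_h(\bv_h,\bv_h)=\|\bv_h\|_{1,h}^2-2\sum_{F\in\calFhI}\int_F\aver{\partial_{\bn_F}\bv_h}\cdot\jump{\bv_h}\,\rd s .
$$
I will bound each face term by the Cauchy--Schwarz inequality in the weighted form
$$
\Bigl|\int_F \aver{\partial_{\bn_F}\bv_h}\cdot\jump{\bv_h}\,\rd s\Bigr|\le h_F^{1/2}\|\aver{\partial_{\bn_F}\bv_h}\|_{L^2(F)}\,h_F^{-1/2}\|\jump{\bv_h}\|_{L^2(F)} .
$$
By Lemma~\ref{corollary:jump-estimate}, the second factor is at most $C h_F\sum_{T\in\omega_F}\|\nabla\bv_h\|_{L^2(T)}$. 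Summing over $F$ and using \eqref{eq:average-bound-H1} together with finite overlap of the face patches gives
$$
2\Bigl|\sum_{F\in\calFhI}\int_F\aver{\partial_{\bn_F}\bv_h}\cdot\jump{\bv_h}\,\rd s\Bigr|\le C_0 h\,\|\bv_h\|_{1,h}^2 .
$$
Consequently $a_h(\bv_h,\bv_h)\ge(1-C_0h)\|\bv_h\|_{1,h}^2\ge\tfrac12\|\bv_h\|_{1,h}^2$ once $h\le 1/(2C_0)$. This is where the smallness of $h$ enters, in addition to the smallness already assumed in Lemmas~\ref{lemma:enrichment-estimate} and~\ref{corollary:jump-estimate} and in Theorem~\ref{theorem:global-infsup}.

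The only delicate point is that the constant in the jump estimate and the constant in the inverse trace inequality \eqref{eq:average-bound-H1} must both be independent of $h$. This way the extra factor $h_F$ from Lemma~\ref{corollary:jump-estimate} genuinely absorbs the consistency term, and no penalty term is needed.

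With continuity, coercivity and inf-sup in hand, Brezzi's theorem yields existence and uniqueness. Alternatively, one can argue directly: it suffices to show that the homogeneous problem ($\bft_h=0$) has only the trivial solution. Testing with $(\bu_h,p_h)$ gives $\nu a_h(\bu_h,\bu_h)=0$, hence $\bu_h=0$, because $\|\cdot\|_{1,h}$ is a norm on $\bV_h$ by \eqref{eq:discrete-Poincare}. Then $(p_h,\Div\bv_h)_{\Omega_h}=0$ for all $\bv_h\in\bV_h$, and \eqref{eq:global-infsup} forces $p_h=0$. Since the system is square and its kernel is trivial, the discrete problem is uniquely solvable.
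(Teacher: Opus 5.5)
Your proposal is correct and follows essentially the same route as the paper. Both arguments bound the consistency term by $C h\|\bv_h\|_{1,h}^2$ using the weighted Cauchy--Schwarz inequality, \eqref{eq:average-bound-H1} and the enhanced jump estimate, which gives coercivity of $a_h$ on all of $\bV_h$ for small $h$; they then combine continuity from \eqref{estimate:ah-bounded} and \eqref{eq:equivalence} with Theorem~\ref{theorem:global-infsup} in Babu\v{s}ka--Brezzi theory. Your alternative ending, showing that the square system has a trivial kernel, is an equally valid way to conclude.
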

\begin{proof}
Set $s_h(\bu_h,\bv_h) := \sum_{F\in\calFhIc} \int_F \aver{\partial_{\bn_F}\bu_h}\cdot \jump{\bv_h}  \rd s$.
The weighted Cauchy--Schwarz inequality, \eqref{eq:average-bound-H1}, and Lemma~\ref{corollary:jump-estimate} give $|s_h(\bu_h,\bv_h)|\lesssim  h \|\bu_h\|_{1,h} \|\bv_h\|_{1,h}.$
Therefore, for sufficiently small $h$, the following coercivity holds
\begin{equation}
\label{estimate:ah-coercive}
a_h(\bv_h,\bv_h)=\|\bv_h\|_{1,h}^2 - 2s_h(\bv_h,\bv_h)\gtrsim\|\bv_h\|_{1,h}^2
\quad\text{ for all }\bv_h\in\bV_h.
\end{equation}
Moreover, \eqref{estimate:ah-bounded} and \eqref{eq:equivalence} imply the continuity $|a_h(\bu_h,\bv_h)| \lesssim  \|\bu_h\|_{1,h}\|\bv_h\|_{1,h}$ for all $\bu_h,\bv_h \in\bV_h.$
According to Babu\v{s}ka--Brezzi theory \cite{boffi2013mixed}, this, \eqref{estimate:ah-coercive}, and Theorem~\ref{theorem:global-infsup} complete the proof.
\end{proof}

The discrete divergence constraint implies the divergence-free property. This is a simple extension of \cite[Lemma~5.2]{neilan2021divergence} to curved tetrahedral meshes.

\begin{lemma}
\label{cor:exact-divergence-free}
Let $\bu_h\in\bV_h$ satisfy \eqref{eq:divuh-qh}.
Then $\Div\bu_h=0$ in $\Omega_h$.
\end{lemma}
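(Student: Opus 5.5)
The plan is to show that $\Div\bu_h$ itself lies in $Q_h$ up to the Jacobian weights that the Piola transform introduces, and then to test \eqref{eq:divuh-qh} with a suitable element of $Q_h$. Fix $T\in\calTh$ and write $\bu_h|_T=\Phkv\hbv$ with $\hbv\in\hbV(\hT)$. By Lemma~\ref{lem:pro-AT}, $\Div\bu_h\circ\Fhk=\Div\hbv/J_{\Fhk}$, and $\Div\hbv\in\hQ(\hT)$ because $\hbv$ is piecewise $\bP_k$ on $\hT^r$. Hence $g_T:=(J_{\Fhk}\Div\bu_h)|_T\circ\Fhk^{-1}=\Div\hbv\circ\Fhk^{-1}\in Q(T)$.

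Next we pass to the affine side, because $Q_h$ is the image of $\tQ_h$ under $\Gh$ and is defined through the affine elements. Using \eqref{eq:push-pull-tv-V}, write $\bu_h|_T=\Ptkv\tbv_T$ with $\tbv_T\in\tbV(\tT)$. Lemma~\ref{lem:pro-AT} gives $\Div\bu_h\circ\Ftk=\tDiv\tbv_T/J_{\Ftk}$, and $\tDiv\tbv_T\in\tQ(\tT)$ since $\tbv_T$ is piecewise polynomial on $\tT^r$. Define $\tq$ elementwise by $\tq|_{\tT}:=\tDiv\tbv_T$. Then $\tq$ lies in the broken space; it has zero mean over $\tOmega_h$ because
$$\int_{\tOmega_h}\tq\,\rd\tx=\int_{\Omega_h}\Div\bu_h\dx=\int_{\partial\Omega_h}\bu_h\cdot\bn\ds=0,$$
by the $\bH_0(\Div)$-conformity in Theorem~\ref{thm:Hdiv-conformity}. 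Hence $\tq\in\tQ_h$, and $q_h:=\Gh\tq=\tq\circ\Ftkh^{-1}\in Q_h$.

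Now take this $q_h$ in \eqref{eq:divuh-qh}. The second identity in \eqref{eq:pro-AT}, applied elementwise with $\Phi=\Ftk$, turns the test into an affine-side integral:
$$0=(\Div\bu_h,q_h)_{\Omega_h}=\sum_{\tT}(\tDiv\tbv_T,\tq)_{\tT}=\|\tq\|_{L^2(\tOmega_h)}^2.$$
Therefore $\tq=0$, so $\Div\bu_h\circ\Ftk=0$ on every element. Since $\bu_h\in\bH(\Div;\Omega_h)$ has no distributional divergence concentrated on faces, $\Div\bu_h=0$ in $\Omega_h$.

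The main obstacle is choosing the test function. If one tested directly with $\Div\bu_h$, that function is not in $Q_h$ because of the factor $1/J$. The fix is to test with the pulled-back divergence $\tq$, where the Jacobian weights cancel exactly through \eqref{eq:pro-AT}. The remaining check is that $\tq$ belongs to $\tQ_h$: the piecewise polynomial structure comes from the divergence of $\tbV(\tT)$ landing in $\tQ(\tT)$, and the zero-mean condition comes from the global $\bH_0(\Div)$-conformity.
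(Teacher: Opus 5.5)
Your proof is correct and is essentially the argument the paper intends when it defers to \cite[Lemma~5.2]{neilan2021divergence}. You test \eqref{eq:divuh-qh} with the pressure whose pullback is the divergence of the pulled-back velocity, so that \eqref{eq:pro-AT} turns the pairing into $\|\tq\|_{L^2(\tOmega_h)}^2$, and you meet the zero-mean constraint of $\tQ_h$ via $\int_{\Omega_h}\Div\bu_h\dx=0$ from Theorem~\ref{thm:Hdiv-conformity}. The first paragraph (the function $g_T$) is never used and can be dropped.
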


\subsection{Error estimates}
\label{subsection:error-estimates}
Define $\bZ_h = \left\{ \bv_h\in\bV_h: \Div\bv_h=0 \text{ in }\Omega_h\right\}.$
By Lemma \ref{cor:exact-divergence-free} and the discrete problem, the discrete velocity solution $\bu_h \in \bV_h$ can be uniquely determined by the following problem: Find $\bu_h \in \bZ_h$ such that 
\begin{equation}
\label{eq:discrete-Zh}
\nu a_h(\bu_h, \bv_h) = (\bft_h, \bv_h)_{\Omega_h}  \quad \text{ for all } \bv_h \in \bZ_h.    
\end{equation}

\begin{lemma}
\label{lemma:boundary-compatible-approximation}
Let $\bu\in\bH^s(\Omega)\cap\bV$ with $s\geq2$, and let
$\bu$ also denote an extension satisfying \eqref{extenUandP}.
For sufficiently small $h$, there exists
$\bw_h\in\bV_h$ such that
\begin{equation}\label{estimate:boundary-compatible-approximation}
\|\bu-\bw_h\|_{*,h}
\lesssim h^{l-1}\|\bu\|_{H^l(\Omega)},
\qquad l=\min\{k+1,s\}.
\end{equation}
\end{lemma}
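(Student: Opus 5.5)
The natural candidate is the nodal interpolant of the \emph{transported} solution rather than of $\bu$ itself. The extended $\bu$ does not vanish on $\partial\Omega_h$, so interpolating it directly would not produce an element of $\bV_h$.

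\textbf{Step 1: construction.} Set $\check\bu:=\bu\circ\Fkrh$ on $\Omega_h$. Since $\Fkrh$ is a global homeomorphism mapping $\partial\Omega_h$ onto $\partial\Omega$ and $\bu\in\bV$, $\check\bu$ is continuous on $\Omega_h$ and vanishes on $\partial\Omega_h$. Define $\bw_h|_T:=\bIT(\check\bu|_T)$ for every $T\in\calTh$, with $\bIT$ from Lemma~\ref{eq:interpolation-error}. Nodal values at shared nodes coincide because $\check\bu$ is single-valued, and they vanish at nodes on $\partial\Omega_h$. Hence, by Remark~\ref{re:Vh-Con-Sh-ISO}, $\bw_h\in\bV_h$.

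\textbf{Step 2: splitting.} Write $\|\bu-\bw_h\|_{*,h}\le\|\bu-\check\bu\|_{*,h}+\|\check\bu-\bw_h\|_{*,h}$.

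For the second term, the chain rule together with \eqref{eq:psi-estimates-1} gives $\|\check\bu\|_{H^l(T)}\lesssim\|\bu\|_{H^l(\Fkr(T))}$. Lemma~\ref{eq:interpolation-error} then yields $|\check\bu-\bw_h|_{H^m(K)}\lesssim h_T^{l-m}\|\bu\|_{H^l(\Fkr(T))}$ for $m=0,1,2$; here $m=2\le l$ is allowed since $s\ge2$. The face contributions of $\|\cdot\|_{*,h}$ are handled as follows.
\begin{itemize}
\item Since $\check\bu$ is continuous, $\jump{\check\bu-\bw_h}=-\jump{\bw_h}$ is a jump of the error $\check\bu-\bw_h$.
\item Apply the scaled trace inequality on the subelements $K\in T^r$ touching $F$:
\[
h_F^{-1/2}\|\bv\|_{L^2(F)}\lesssim h_T^{-1}\|\bv\|_{L^2(K)}+|\bv|_{H^1(K)}
\]
for the jump, and
\[
h_F^{1/2}\|\nabla\bv\|_{L^2(F)}\lesssim |\bv|_{H^1(K)}+h_T|\bv|_{H^2(K)}
\]
for the averaged normal derivative.
\end{itemize}
Both contributions are $\lesssim h^{l-1}\|\bu\|_{H^l}$. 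Summing over elements with finite overlap and $\sum_T\|\bu\|^2_{H^l(\Fkr(T))}\lesssim\|\bu\|^2_{H^l(\Omega)}$ gives the bound.

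\textbf{Step 3: the geometric term $\bu-\check\bu$.} This is where I expect the main work. The difference $\bu-\check\bu$ is globally continuous on $\Omega_h$, so its jumps vanish. It is nonzero only on elements with a boundary edge or face, since $\Fkr=\operatorname{id}$ elsewhere. The gradient splits as
\[
\nabla\check\bu-\nabla\bu=(\nabla\bu\circ\Fkr)(D\Fkr-\bI)+(\nabla\bu\circ\Fkr-\nabla\bu).
\]
\begin{itemize}
\item The first piece is $\lesssim h^{k}\|\nabla\bu\|_{L^2}$ by \eqref{eq:psi-estimates-1} with $m=1$, after a change of variables using \eqref{eq:psi-estimates-2}.
\item For the second, write it as $\int_0^1\nabla^2\bu(x+t(\Fkr(x)-x))\,(\Fkr(x)-x)\,\rd t$ and use $|\Fkr-\operatorname{id}|\lesssim h^{k+1}$. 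The maps $x\mapsto x+t(\Fkr(x)-x)$ are uniformly bi-Lipschitz for small $h$, so changing variables gives $\lesssim h^{k+1}\|\bu\|_{H^2}$, with the extended $\bu$ controlled through \eqref{extenUandP}.
\end{itemize}
For the averaged normal derivative I would use the trace inequality with $h_T|\bu-\check\bu|_{H^2(K)}$. The second derivatives contain $(\nabla\bu\circ\Fkr)\,D^2\Fkr$ with $|D^2\Fkr|\lesssim h^{k-1}$, plus terms of the type already treated. The face term is therefore $\lesssim h^{k}\|\bu\|_{H^2}$.

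Since $l-1\le k$, all these geometric contributions are $\lesssim h^{l-1}\|\bu\|_{H^l(\Omega)}$ (using \eqref{extenUandP}). The delicate point is the careful bookkeeping of domain mismatch: integrals over $\Omega_h\setminus\Omega$ must be bounded by the extension norm. Combining Steps 2 and 3 proves \eqref{estimate:boundary-compatible-approximation}.
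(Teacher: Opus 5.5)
Your argument is correct, but it uses a different decomposition from the paper's.

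\textbf{Comparison with the paper.} The paper uses the same interpolant $\bw_h|_T=\bIT(\bu\circ\Fkrh)$, but it inserts $\bIT\bu$ (the interpolant of the extended $\bu$ on $T$) rather than $\check\bu=\bu\circ\Fkrh$. The geometric error then appears only as the discrete function $\bIT(\bu-\bu\circ\Fkrh)$. By scaling and finite-dimensional norm equivalence, its $H^m(K)$ norms are bounded by $h_T^{3/2-m}\max_{a\in\NT}|\bu(a)-\bu(\Fkrh(a))|$. The paper then uses $|a-\Fkrh(a)|\lesssim h_T^{k+1}$, the embedding $H^2(\mathbb R^3)\hookrightarrow C^{0,1/2}$ for $s=2$ (resp.\ the mean value formula for $s\geq3$), and $\sum_T h_T^3\approx|\Omega|$. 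This term therefore contributes $h^{(k-1)/2}\|\bu\|_{H^2}$ (resp.\ $h^{k}\|\bu\|_{H^3}$), and the case $s=2$ is where $k\geq3$ is needed.

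The paper's route is shorter. It never needs the chain-rule bound $\|\bu\circ\Fkr\|_{H^l(T)}\lesssim\|\bu\|_{H^l(\Fkr(T))}$ with higher derivatives of $\Fkr$. Nor does it need your segment and change-of-variables argument for $\bu-\check\bu$. Your route, in exchange:
\begin{itemize}
\item gives local $L^2$-type bounds without sup-norm embeddings;
\item keeps the interpolation error purely in terms of $\|\bu\|_{H^l(\Omega)}$, since the extension enters only through the geometric term;
\item for $s=2$, does not use $k\geq3$ at this point.
\end{itemize}

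\textbf{One correction to Step 3.} For $s=2$, your claim that the averaged-normal-derivative contribution of $\bu-\check\bu$ is $\lesssim h^k\|\bu\|_{H^2}$ is not justified. $D^2\check\bu-D^2\bu$ contains the term $(D^2\bu)\circ\Fkr-D^2\bu$, and the segment argument makes this small only when $\bu\in\bH^3$. With $s=2$ this term is merely bounded by $\|\bu\|_{H^2}$, so $h_T|\bu-\check\bu|_{H^2(K)}$ gives only $h\|\bu\|_{H^2}$. That is still the required $h^{l-1}$, because $l=2$. For $s\geq3$ the same computation gives $h^k\|\bu\|_{H^3}$, dominated by $h_T(\nabla\bu\circ\Fkr)D^2\Fkr$, which suffices since $l-1\leq k$. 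With this case distinction your proof goes through.
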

\begin{proof}
Set $\bw_h|_T:=\bIT(\bu\circ\Fkrh)$ for every $T\in\calTh$. Since $\bu\circ\Fkrh=0$ on $\partial\Omega_h$, boundary unisolvence then implies $\bw_h\in\bV_h$. The scaled trace inequalities yield
$$
\|\bu-\bw_h\|_{*,h}^2
\lesssim
\sum_{T\in\calTh}\sum_{m=0}^2 h_T^{2(m-1)}
\sum_{K\in T^r}\|\bu-\bw_h\|_{H^m(K)}^2.
$$
On each $T\in\calTh$, inserting $\bIT\bu$ gives, for
$K\in T^r$ and $m=0,1,2$,
$$
\|\bu-\bw_h\|_{H^m(K)}
\leq
\|\bu-\bIT\bu\|_{H^m(K)}
+\|\bIT(\bu-\bu\circ\Fkrh)\|_{H^m(K)}.
$$
The first term is the local interpolation error, which
can be estimated by Lemma~\ref{eq:interpolation-error}.
For the second term, scaling arguments and
finite-dimensional norm equivalence on the reference
element yield
$$
\|\bIT(\bu-\bu\circ\Fkrh)\|_{H^m(K)}
\lesssim h_T^{3/2-m}
\max_{a\in\NT}|\bu(a)-\bu(\Fkrh(a))|.
$$
By \eqref{eq:psi-estimates-1},
$|a-\Fkrh(a)|\lesssim h_T^{k+1}$.
Sobolev embedding and, for $s\geq3$, the mean value
formula give
$$
\max_{a\in\NT}|\bu(a)-\bu(\Fkrh(a))|
\lesssim
\begin{cases}
h_T^{(k+1)/2}\|\bu\|_{H^2(\mathbb R^3)},
& s=2,\\
h_T^{k+1}\|\bu\|_{H^3(\mathbb R^3)},
& s\geq3.
\end{cases}
$$
Since $\sum_{T\in\calTh}h_T^3\approx|\Omega|$,
$k\geq3$, and $l\leq k+1$, combining the above estimates complete the proof.
\end{proof}

\begin{theorem}
\label{theorem:error-estimate}
Let $(\bu,p)\in\bH^s(\Omega)\times H^{s-1}(\Omega)$, $s\geq2$,
solve \eqref{eq:continuous-mixed}, and let
$(\bu_h,p_h)\in\bV_h\times Q_h$ solve \eqref{eq:discrete-stokes}.
For sufficiently small $h$ and $l:=\min\{k+1,s\}$,
\begin{equation}
\label{estimate:velocity-error}
\|\bu-\bu_h\|_{1,h}
\lesssim h^{l-1}\|\bu\|_{H^l(\Omega)}
+\nu^{-1}|\bft-\bft_h|_{\bZ_h^*},
\end{equation}
where $|\bft-\bft_h|_{\bZ_h^*}:=
\sup_{\bv_h\in\bZ_h\setminus\{\bzero\}}
(\bft-\bft_h,\bv_h)_{\Omega_h}/ \|\bv_h\|_{1,h}$.
The pressure approximation satisfies
\begin{equation}
\label{estimate:pressure-error}
\begin{aligned}
\|p-p_h\|_{L^2(\Omega_h)}
\lesssim{}&
\nu\bigl(\|\bu-\bu_h\|_{1,h}
+h^{l-1}\|\bu\|_{H^l(\Omega)}\bigr)\\
&+\inf_{q_h\in Q_h}\|p-q_h\|_{L^2(\Omega_h)}
+\|\bft-\bft_h\|_{L^2(\Omega_h)}.
\end{aligned}
\end{equation}
\end{theorem}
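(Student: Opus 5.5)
The argument is a Strang-type estimate on the discretely divergence-free subspace $\bZ_h$, combined with the consistency of $a_h$ for the smooth extension $\bu$ and a divergence-free approximation of $\bu$.

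\textbf{Step 1: a divergence-free approximation.} Take $\bw_h\in\bV_h$ from Lemma~\ref{lemma:boundary-compatible-approximation}. Then correct it to lie in $\bZ_h$. Since $\Div\bu=0$ on $\mathbb R^3$, one has $(\Div(\bu-\bw_h),q_h)_{\Omega_h}=-(\Div\bw_h,q_h)_{\Omega_h}$. By Theorem~\ref{theorem:global-infsup} and Babu\v{s}ka--Brezzi theory, there is $\bz_h\in\bV_h$ with $(\Div\bz_h,q_h)_{\Omega_h}=(\Div\bw_h,q_h)_{\Omega_h}$ for all $q_h\in Q_h$ and $\|\bz_h\|_{1,h}\lesssim\|\Div\bw_h\|_{L^2(\Omega_h)}=\|\Div(\bu-\bw_h)\|_{L^2(\Omega_h)}\lesssim h^{l-1}\|\bu\|_{H^l(\Omega)}$.

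One detail needs care: $\Div\bw_h\perp$ constants, because of $\bH_0(\Div)$-conformity (Theorem~\ref{thm:Hdiv-conformity}). Hence $Q_h\subset L^2_0$ is adequate.

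Set $\bv^\ast:=\bw_h-\bz_h$. Lemma~\ref{cor:exact-divergence-free} gives $\Div\bv^\ast\in\Div\bV_h$, and it is orthogonal to $Q_h$. I need $\Div\bV_h\subset Q_h$, which follows from the Piola identity \eqref{eq:pro-AT} and the reference inclusion $\Div\hbV\subset\hQ$ up to the Jacobian factor. This is exactly the content of Lemma~\ref{cor:exact-divergence-free}, so $\bv^\ast\in\bZ_h$. By \eqref{eq:equivalence}, $\|\bu-\bv^\ast\|_{*,h}\lesssim h^{l-1}\|\bu\|_{H^l}$.

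\textbf{Step 2: error equation.} Let $\be_h:=\bu_h-\bv^\ast\in\bZ_h$. Coercivity \eqref{estimate:ah-coercive} gives
$$\nu\|\be_h\|_{1,h}^2\lesssim \nu a_h(\bu_h-\bu,\be_h)+\nu a_h(\bu-\bv^\ast,\be_h).$$
The second term is bounded by \eqref{estimate:ah-bounded} and \eqref{eq:equivalence}.

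For the first term I use consistency. Since $\bu\in\bH^2$, we have $\jump{\bu}=0$. Elementwise integration by parts then gives
$$\nu a_h(\bu,\be_h)=(-\nu\Delta\bu,\be_h)_{\Omega_h}+\nu\sum_{F}\int_F\big(\jump{\partial_{\bn_F}\bu\cdot\be_h}-\aver{\partial_{\bn_F}\bu}\cdot\jump{\be_h}\big)\,\rd s=(-\nu\Delta\bu,\be_h)_{\Omega_h}.$$
The face terms cancel because $\partial_{\bn_F}\bu$ is single-valued. The boundary faces vanish since $\be_h=0$ on $\partial\Omega_h$.

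Also $-\nu\Delta\bu=\bft-\nabla p$, and $(\nabla p,\be_h)_{\Omega_h}=-(p,\Div\be_h)=0$ by $\bH_0(\Div)$-conformity and $\Div\be_h=0$. Using \eqref{eq:discrete-Zh}, this yields $\nu a_h(\bu_h-\bu,\be_h)=(\bft_h-\bft,\be_h)_{\Omega_h}$.

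The main obstacle is justifying the elementwise integration by parts with the normal $\bn_F$ on curved faces and the $H^2$ regularity needed for the traces; I would simply take $s\ge2$ there. Combining the pieces with the triangle inequality gives \eqref{estimate:velocity-error}.

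\textbf{Step 3: pressure.} For $q_h\in Q_h$, the inf-sup condition (Theorem~\ref{theorem:global-infsup}) gives
$$\|q_h-p_h\|\lesssim\sup_{\bv_h}\frac{(q_h-p_h,\Div\bv_h)}{\|\bv_h\|_{1,h}}.$$
Subtract \eqref{eq:ah-uh-vh} from the consistency identity for general $\bv_h\in\bV_h$, namely $\nu a_h(\bu,\bv_h)-(p,\Div\bv_h)=(\bft,\bv_h)$. This is the same integration by parts, with the pressure now kept and no face terms for $p$, by normal continuity. It gives
$$(p_h-p,\Div\bv_h)=\nu a_h(\bu_h-\bu,\bv_h)+(\bft-\bft_h,\bv_h).$$

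Next, insert $\bv^\ast$: write $a_h(\bu_h-\bu,\cdot)$ as $a_h(\bu_h-\bv^\ast,\cdot)+a_h(\bv^\ast-\bu,\cdot)$. This gives $\nu(\|\bu-\bu_h\|_{1,h}+h^{l-1}\|\bu\|_{H^l})\|\bv_h\|_{1,h}$, via the boundedness \eqref{estimate:ah-bounded} and the equivalence \eqref{eq:equivalence}. The load term is bounded by $\|\bft-\bft_h\|_{L^2}\|\bv_h\|_{L^2}$ together with the discrete Poincar\'e inequality \eqref{eq:discrete-Poincare}.

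Finally, add $(q_h-p,\Div\bv_h)\le\|p-q_h\|\,\|\bv_h\|_{1,h}$ and apply the triangle inequality. This yields \eqref{estimate:pressure-error}.
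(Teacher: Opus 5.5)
Your proposal is correct and follows essentially the same route as the paper: a Strang-type bound on $\bZ_h$ using the inf-sup-corrected approximation $\bw_h-\bz_h\in\bZ_h$, and consistency of $a_h$ via elementwise integration by parts with $\jump{\bu}=0$ and $\bZ_h\subset\bH_0(\Div;\Omega_h)$. The pressure bound likewise comes from the global inf-sup condition, with $\bu_h-\bu$ split through the discrete approximation. One side remark should be corrected: on curved elements $\Div\bV_h\not\subset Q_h$, because the factor $1/J_{\Fhk}$ is not polynomial. This is not needed, since $\bw_h-\bz_h\in\bZ_h$ follows directly from Lemma~\ref{cor:exact-divergence-free}, and the inf-sup condition alone provides $\bz_h$ without any orthogonality-to-constants argument.
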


\begin{proof}
For $\bz_h\in\bZ_h$, the coercivity \eqref{estimate:ah-coercive}, continuity \eqref{estimate:ah-bounded} and equivalence \eqref{eq:equivalence} give 
$$\|\bz_h-\bu_h\|_{1,h}^2\lesssim \|\bz_h-\bu\|_{*,h}\|\bz_h-\bu_h\|_{1,h}+|a_h(\bu-\bu_h,\bz_h-\bu_h)|.$$
Since $\bz_h-\bu_h\in\bZ_h$, the above inequality and the triangle inequality yield
\begin{equation}
\label{es1:eq1}
\nu\|\bu-\bu_h\|_{1,h}\lesssim
\nu\inf_{\boldsymbol z_h\in\bZ_h}\|\bu-\boldsymbol z_h\|_{*,h}
+\nu\sup_{\bv_h\in\bZ_h\setminus\{\bzero\}}
\frac{|a_h(\bu_h-\bu,\bv_h)|}{\|\bv_h\|_{1,h}}.   
\end{equation}
Let $\bw_h$ be the approximation of $\bu$ from
Lemma~\ref{lemma:boundary-compatible-approximation}.
By Theorem~\ref{theorem:global-infsup} and \cite[Corollary~4.1.1]{boffi2013mixed}, there exists $\br_h\in\bV_h$ such that
$$
(\Div\br_h,q_h)_{\Omega_h}
=(\Div(\bw_h-\bu),q_h)_{\Omega_h}~\text{ for all } q_h\in Q_h,
\quad
\|\boldsymbol r_h\|_{1,h}\lesssim\|\bu-\bw_h\|_{1,h}.
$$
Since $\Div\bu=0$, Lemma~\ref{cor:exact-divergence-free} implies $\bw_h-\boldsymbol r_h\in\bZ_h$. This, \eqref{eq:equivalence}, and \eqref{estimate:boundary-compatible-approximation} yield
\begin{equation}
\label{es1:eq2}
\nu\inf_{\bz_h\in\bZ_h}\|\bu-\bz_h\|_{*,h}
\leq\nu\|\bu-(\bw_h-\br_h)\|_{*,h}\lesssim\nu\|\bu-\bw_h\|_{*,h}
\lesssim\nu h^{l-1}\|\bu\|_{H^l(\Omega)}.
\end{equation}
For $\bv_h\in\bV_h$, recall that $\bv_h=0$ on $\partial\Omega_h$ and $\bv_h\in\bH_0(\Div;\Omega_h)$. Moreover, $\jump{\bu}=0$ for all $F\in\calFhI$.
Elementwise integration by parts gives
\begin{equation}\label{eq:consistency}
\nu a_h(\bu,\bv_h)-(p,\Div\bv_h)_{\Omega_h}
=(\bft,\bv_h)_{\Omega_h}
\quad\text{ for all } \bv_h\in\bV_h.
\end{equation}
Together with \eqref{eq:discrete-Zh}, this implies
$\nu a_h(\bu_h-\bu,\bv_h)=(\bft_h-\bft,\bv_h)_{\Omega_h}$
for all $\bv_h\in\bZ_h$.
Substituting this and \eqref{es1:eq2} into \eqref{es1:eq1}
proves \eqref{estimate:velocity-error}.

Let $q_h\in Q_h$. The triangle inequality and Theorem~\ref{theorem:global-infsup} give
$$
\|p-p_h\|_{L^2(\Omega_h)}
\lesssim\|p-q_h\|_{L^2(\Omega_h)}
+\sup_{\bv_h\in\bV_h\setminus\{\bzero\}}
\frac{|(p_h-q_h,\Div\bv_h)_{\Omega_h}|}{\|\bv_h\|_{1,h}}.
$$
Subtracting \eqref{eq:consistency} from the discrete problem \eqref{eq:ah-uh-vh} gives
$$
(p_h-q_h,\Div\bv_h)_{\Omega_h}=\nu a_h(\bu_h-\bu,\bv_h)+(p-q_h,\Div\bv_h)_{\Omega_h}-(\bft_h-\bft,\bv_h)_{\Omega_h}.
$$
The splitting $\bu_h-\bu=(\bu_h-\bw_h)+(\bw_h-\bu)$, together with \eqref{estimate:ah-bounded}, \eqref{eq:equivalence} and the triangle inequality, yields
$$
\begin{aligned}
|a_h(\bu_h-\bu,\bv_h)|
&\lesssim\bigl(\|\bu_h-\bw_h\|_{*,h}+\|\bu-\bw_h\|_{*,h}\bigr)\|\bv_h\|_{1,h}\\
&\lesssim\bigl(\|\bu_h-\bu\|_{1,h}+\|\bu-\bw_h\|_{*,h}\bigr)\|\bv_h\|_{1,h}.    
\end{aligned}
$$
The estimates \eqref{estimate:velocity-error}, \eqref{estimate:boundary-compatible-approximation},
the Cauchy--Schwarz inequality, the discrete Poincar\'e inequality
\eqref{eq:discrete-Poincare}, and the infimum over $q_h\in Q_h$
now prove \eqref{estimate:pressure-error}.
\end{proof}

Assume that $\bu$ and $\bft$ are sufficiently smooth. If $\bft_h$ is taken as its nodal (isoparametric) interpolant of degree $k$, the discrete Poincar\'e inequality \eqref{eq:discrete-Poincare} gives
$$| \bft- \bft_h|_{\bZ_h^*}\lesssim \|\bft-\bft_h\|_{L^2(\Omega_h)}\lesssim h^{k+1}\|\bft\|_{H^{k+1}(\Omega)}.$$
Substituting this into Theorem~\ref{theorem:error-estimate} leads to the following velocity error estimate:
\begin{equation}
\label{es1:velocityByIntf}
\begin{aligned}
\| \nabla(\bu-\bu_h) \|_{L^2(\Omega_h)}\lesssim& h^{k} \big(\| \bu \|_{H^{k+1}(\Omega)}+ \nu^{-1}h\|\bft\|_{H^{k+1}(\Omega)}\big).
\end{aligned}    
\end{equation}
The presence of the terms $\nu^{-1} h \|\bft\|_{H^{k+1}(\Omega)}$ in \eqref{es1:velocityByIntf} indicates that the velocity error remains coupled to the pressure scaling and inversely proportional to the viscosity. Consequently, a direct application of the pair $(\bV_h, Q_h)$ fails to provide a pressure-robust scheme, as large pressure gradients or small viscosity values may significantly degrade the accuracy of the velocity approximation.

\section{Pressure-Robust Modification}
\label{sec:pressure-robust}

This section develops a unified framework for constructing commuting operators on curved meshes. These operators are then used to define a computable approximation $\bft_h$ and to obtain a pressure-robust scheme.

\subsection{Abstract framework}
Introduce the spaces defined on $\hT$ as 
$$
\begin{aligned}
\hat\Sigma (\hT)
&:=\{\hat\sigma\in C^1(\hT):
\hat\sigma|_{\hK}\in P_{k+2}(\hK)
\text{ for all }\hK\in\hT^r\},\\
\hbW(\hT)
&:=\{\hbw\in \bC^0(\hT):
\hat{\operatorname{curl}}\hbw\in \bC^0(\hT),\ \hbw|_{\hK}\in\bm P_{k+1}(\hK)
\text{ for all }\hK\in\hT^r\}. 
\end{aligned}
$$
Given $\Phi: T_0 \to T_1$, for scalar functions $\sigma_0$ on $T_0$ and $\sigma_1$ on $T_1$, define 
\begin{equation}\label{eq:defPsandRs}
\mathcal{P}_\Phi^\Sigma \sigma_0 := \sigma_0 \circ \Phi^{-1}, \quad \mathcal{R}_\Phi^\Sigma \sigma_1 := \sigma_1 \circ \Phi.     
\end{equation}
For vector functions $\bw_0$ on $T_0$ and $\bw_1$ on $T_1$, define
\begin{equation}\label{eq:defPwandRw}
\mathcal{P}_\Phi^W \bw_0 := (D\Phi^{-\trans} \bw_0) \circ \Phi^{-1}, \quad 
\mathcal{R}_\Phi^W \bw_1 := D\Phi^{\trans} (\bw_1 \circ \Phi).    
\end{equation}
Define the local spaces by
$$
\Sigma(T):=\Phks \hat\Sigma(\hT), \quad \bW(T):=\Phkw \hbW(\hT)  \quad \text{ for all } T\in\calTh.
$$
The corresponding global spaces are defined as
$$
\begin{aligned}
\Sigma_h &:=\{\sigma_h\in H^1(\Omega_h):
\sigma_h|_T\in\Sigma(T)\text{ for all }T\in\calTh\},\\ 
\bW_h &:=\{\bw_h\in\bH(\operatorname{curl};\Omega_h):
\bw_h|_T\in\bW(T)\text{ for all }T\in\calTh\}.
\end{aligned}
$$
The following theorem establishes the commuting property and approximation estimate required for the pressure-robust load approximation. An analogous result was established in two dimensions by Neilan and Otus \cite[Theorem~6.1]{neilan2021divergence}.
\begin{theorem}\label{thm:abstract-pressure-robustness}
There exist linear operators $\pis:H^{4}(\Omega)\to\Sigma_h$ and $\piw:\bH^3(\Omega)\to\bW_h$ such that
\begin{subequations}\label{eq:abstract-properties}
\begin{align}
\piw\nabla p
&=\nabla\pis p
&&\text{for all }p\in H^{4}(\Omega),
\label{eq:abstract-commuting}\\
\|\bv-\piw\bv\|_{L^2(\Omega_h)}
&\lesssim h^k\|\bv\|_{H^{k}(\Omega)}
&&\text{for all }\bv\in\bH^{k}(\Omega),
\label{eq:abstract-approximation}
\end{align}
\end{subequations}
where $\bv$ in the left-hand side of the above inequality is an $\bH^{k}$ extension of $\bv|_\Omega$. 
\end{theorem}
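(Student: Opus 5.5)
The operators are built elementwise, following the recipe announced in the introduction. For each $T\in\calTh$ I use the exact curved element $\rT=\Fkr(T)$ and the map $\Fhr=\Fkr\circ\Fhk:\hT\to\rT$. The local operators are defined as pullback from $\rT$, followed by the reference interpolant, followed by pushforward to $T$:
\[
\pis p|_T:=\Phks\,\hpis\,\Rhrs\,(p|_{\rT}),\qquad \piw\bv|_T:=\Phkw\,\hpiw\,\Rhrw\,(\bv|_{\rT}).
\]
Here $\hpis:H^4(\hT)\to\hat\Sigma(\hT)$ and $\hpiw:\bH^3(\hT)\to\hbW(\hT)$ are the interpolants on the Alfeld split from \cite{fu2020exact}. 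They satisfy $\hpiw\hnab=\hnab\hpis$ by \cite[Theorem 4.20]{fu2020exact}.

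\textbf{Commuting property.} The chain rule gives $\Rhrw\nabla p=D\Fhr^{\trans}(\nabla p\circ\Fhr)=\hnab(p\circ\Fhr)=\hnab\Rhrs p$. In the same way, $\Phkw\hnab\hat\sigma=\nabla\Phks\hat\sigma$. Composing these two identities with the reference commuting diagram yields $\piw\nabla p=\nabla\pis p$ on each $T$. This proves \eqref{eq:abstract-commuting} elementwise. Both sides are then globally conforming once the conformity step below is done.

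\textbf{Global conformity.} This is the main obstacle. I must show $\pis p\in H^1(\Omega_h)$ and $\piw\bv\in\bH(\curl;\Omega_h)$. I will use two facts about the reference DOFs of \cite{fu2020exact}.

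First, the trace of $\hpis\hat\sigma$ on a face $\hF$ depends only on $\hat\sigma|_{\hF}$. Likewise, the tangential trace of $\hpiw\hbw$ depends only on the tangential trace of $\hbw$ on $\hF$. This holds because the DOFs split into vertex, edge and face DOFs involving only traces.

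Second, these face DOFs are invariant under the affine self-maps of $\hF$ that permute its vertices. Equivalently, the trace interpolant commutes with the face transformation.

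Now let $F$ be shared by $T^\pm$, with reference faces $\hF^\pm$. The exact maps satisfy $\Fhr^+|_{\hF^+}=\Fhr^-|_{\hF^-}\circ\gamma_F$ for an affine vertex permutation $\gamma_F:\hF^+\to\hF^-$. The same relation holds for $\Fhk^\pm$, by the compatibility results of \cite{lenoir1986optimal} applied to both the isoparametric and the exact maps. Hence the two pulled-back traces of $p$ (resp. tangential traces of $\bv$) are related by $\gamma_F$. By trace locality and invariance, the reference interpolants have traces related by $\gamma_F$. Pushing forward with $\Fhk^\pm$, whose face restrictions agree, gives equal traces on $F$. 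For the covariant transform I will use that the tangential trace satisfies $\bP_F\Phkw\hbw=\bP_F(D\Phi^{-\trans}\hbw)\circ\Phi^{-1}$. This depends only on the tangential Jacobian $\Dtau\phi$ and the tangential part of $\hbw$, so the face identity transfers. The smoothness $p\in H^4$, $\bv\in\bH^3$ gives enough regularity for the DOFs after pullback, since $\Fhr$ is smooth.

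\textbf{Approximation.} On $T$, write $\hbv=\Rhrw\bv$. Since $\hpiw$ preserves $\bP_{k-1}$ (indeed $\bP_{k+1}$), Bramble--Hilbert gives
\[
\|\hbv-\hpiw\hbv\|_{L^2(\hT)}\lesssim|\hbv|_{H^k(\hT)}.
\]
Using \eqref{eq:theta-estimates} and the chain rule,
\[
|\hbv|_{H^k(\hT)}\lesssim h_T^{k+1-3/2}\|\bv\|_{H^k(\rT)},
\]
where the extra factor $h_T$ comes from $D\Fhr^\trans$. Pushing forward with $\Phkw$ costs $h_T^{-1}h_T^{3/2}$. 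This gives $\|\Phkw\hbv-\piw\bv\|_{L^2(T)}\lesssim h_T^k\|\bv\|_{H^k(\rT)}$.

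It remains to bound $\|\bv-\Phkw\Rhrw\bv\|_{L^2(T)}$. This equals $\|\bv-D\Fkr^{\trans}(\bv\circ\Fkr)\|_{L^2(T)}$, which by \eqref{eq:psi-estimates-1} and the mean value estimate is $\lesssim h_T^{k}\|\bv\|_{H^1}$ locally, using the $H^k$ extension of $\bv$. Summing over $T$ proves \eqref{eq:abstract-approximation}.
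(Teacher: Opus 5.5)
Your construction of the local operators, the chain-rule proof of $\piw\nabla p=\nabla\pis p$, and the approximation argument all match the paper. The same holds for the reduction of global conformity to trace locality of the reference interpolants plus compatibility with the affine face map $\gamma_F$, including your observation that the tangential trace of a covariant pushforward depends only on $\Dtau\phi$. The approximation split into a geometric term and a pushed-forward Bramble--Hilbert term also has the same scaling $h_T^{1/2}\cdot h_T^{k-1/2}$. Your local $H^1$ bound for $\bv-\bv\circ\Fkr$ (via an integral mean-value formula and a change of variables) is a harmless variant of the paper's $L^\infty$ bound with $H^3\hookrightarrow W^{1,\infty}$.

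The gap is in the conformity step, which you rightly call the main obstacle. Both properties it rests on are asserted rather than proved, and the reason you give for the first is not correct as stated. Without them you have not shown $\pis p\in H^1(\Omega_h)$ and $\piw\bv\in\bH(\curl;\Omega_h)$, i.e.\ that the operators map into $\Sigma_h$ and $\bW_h$ at all.

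(i) The element DOFs of $\hpis$ and $\hpiw$ in \cite{fu2020exact} do not involve only traces. They contain derivatives transversal to $\hF$, for instance all derivatives up to order two (respectively one) at the vertices, and normal-derivative moments. Trace locality instead requires a set of functionals on $P_{k+2}(\hF)$ (respectively $\bP^{\,\mathrm{tan}}_{k+1}(\hF)$) with three properties: they depend only on the (tangential) trace, lie in the span of the element DOFs, and are unisolvent. The paper supplies such face DOF sets. Their unisolvence uses the factorization $q=(\hat\lambda_1\hat\lambda_2\hat\lambda_3)^2p$ in the scalar case. In the vector case it uses the surface Stokes formula, a bubble factorization of $\curl_{\hF}\hbw$, and $\Div_{\hF}\boldsymbol{\mathcal D}_{r-5}(\hF)=P_{r-6}(\hF)$ with $r:=k+2$.

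(ii) Invariance under $\gamma_F$ is not automatic either. $\gamma_F$ maps between two generally different, non-congruent faces of $\hT$ and is not orthogonal, and DOFs built from normal derivatives are in general not affine invariant; pointwise normal derivatives in the Argyris element are the standard counterexample. Here the in-face edge normal derivative pulls back to a combination of normal and tangential derivatives. The tangential part stays in the DOF span only because the moments are taken against $P_{r-5}(\hat e)$, so integration by parts produces vertex values and edge moments against $P_{r-6}(\hat e)$, which are DOFs again. For $\bW$ you must check two things. First, $\curl_{\hF}$ transforms with the constant factor $\mu_{\gamma_F}$. Second, $\hat{\boldsymbol\kappa}\mapsto\mu_{\gamma_F}^{-1}\Dtau\gamma_F(\hat{\boldsymbol\kappa}\circ\gamma_F^{-1})$ maps $\boldsymbol{\mathcal D}_{r-5}(\hF^+)$ onto $\boldsymbol{\mathcal D}_{r-5}(\hF^-)$, which uses that $\gamma_F$ sends barycenter to barycenter. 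These verifications are the actual content behind your two asserted facts.
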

As in \cite[Corollary~6.2]{neilan2021divergence}, these two properties immediately imply pressure robustness of the resulting scheme. The argument is included for completeness.
\begin{corollary}
\label{cor:pressure-robust}
Let $(\bu,p)\in\bH^{k+2}(\Omega)\times H^{k+1}(\Omega)$
be the solution of \eqref{eq:continuous-mixed} and let
$(\bu_h,p_h)$ be the solution of \eqref{eq:discrete-stokes} with $\bft_h = \piw \bft$. Then it holds that
\begin{equation}
\label{eq:pressure-robust-error}
\|\bu-\bu_h\|_{1,h}
\lesssim h^k\|\bu\|_{H^{k+2}(\Omega)}.
\end{equation}  
\end{corollary}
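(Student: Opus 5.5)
The strategy is to use Theorem~\ref{theorem:error-estimate} and to estimate the dual-norm term $\nu^{-1}|\bft-\bft_h|_{\bZ_h^*}$ so that it carries no factor $\nu^{-1}$. First recall $\bft=-\nu\Delta\bu+\nabla p$ on $\mathbb{R}^3$, where $(\bu,p)$ are the extensions from \eqref{extenUandP}. By linearity of $\piw$ and the commuting property \eqref{eq:abstract-commuting},
$$
\bft-\bft_h=\nu\bigl(-\Delta\bu+\piw\Delta\bu\bigr)+\bigl(\nabla p-\nabla\pis p\bigr).
$$
This requires $p\in H^4$; that holds for $k\geq3$ since $p\in H^{k+1}$, and $\nabla p\in\bH^3$ is in the domain of $\piw$.

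The gradient term vanishes against $\bZ_h$. Take $\bv_h\in\bZ_h$ and $\sigma:=p-\pis p\in H^1(\Omega_h)$, which holds because $\Sigma_h\subset H^1(\Omega_h)$ and $p$ is extended smoothly. Theorem~\ref{thm:Hdiv-conformity} gives $\bv_h\in\bH_0(\Div;\Omega_h)$, so global integration by parts gives
$$
(\nabla\sigma,\bv_h)_{\Omega_h}=-(\sigma,\Div\bv_h)_{\Omega_h}+\int_{\partial\Omega_h}\sigma\,\bv_h\cdot\bn\ds=0,
$$
since $\Div\bv_h=0$ and $\bv_h\cdot\bn=0$ on $\partial\Omega_h$. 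This is the step that genuinely uses the $H(\Div)$-conformity and the global $H^1$-conformity of $\Sigma_h$.

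For the viscous term, apply Cauchy--Schwarz, the discrete Poincar\'e inequality \eqref{eq:discrete-Poincare}, and the approximation estimate \eqref{eq:abstract-approximation} with $\bv=\Delta\bu\in\bH^{k}$:
$$
\nu\,|(\Delta\bu-\piw\Delta\bu,\bv_h)_{\Omega_h}|\lesssim \nu h^k\|\Delta\bu\|_{H^k(\Omega)}\|\bv_h\|_{1,h}\lesssim\nu h^k\|\bu\|_{H^{k+2}(\Omega)}\|\bv_h\|_{1,h}.
$$
Here $\Delta\bu$ on $\Omega_h\setminus\Omega$ is understood as the Laplacian of the extension, which is consistent with the extension bound. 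Hence $\nu^{-1}|\bft-\bft_h|_{\bZ_h^*}\lesssim h^k\|\bu\|_{H^{k+2}(\Omega)}$.

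Inserting this into \eqref{estimate:velocity-error} with $s=k+2$, so that $l=k+1$ and $h^{l-1}\|\bu\|_{H^{k+1}}\le h^k\|\bu\|_{H^{k+2}}$, gives \eqref{eq:pressure-robust-error}.

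The only delicate point is bookkeeping on $\Omega_h\neq\Omega$. The load $\bft_h=\piw\bft$ must be built from data on $\Omega$, through the pullback by $\Fkrh$ inside $\piw$, while the identity $\bft=-\nu\Delta\bu+\nabla p$ used in the consistency step \eqref{eq:consistency} is taken on $\Omega_h$ for the extensions. I would need to check that Theorem~\ref{thm:abstract-pressure-robustness} is applied to the same extended functions, so that the commuting relation and the decomposition hold simultaneously. I expect this to work because $\piw$ depends only on values on $\Omega$ and is linear.
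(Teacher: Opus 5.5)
Your proposal is correct and follows the paper's proof essentially step for step: you split $\bft-\bft_h$ using linearity of $\piw$ and the commuting identity \eqref{eq:abstract-commuting}, annihilate the gradient term through the global $H^1$-conformity of $\pis p$ together with $\bZ_h\subset\bH_0(\Div;\Omega_h)$ and $\Div\bv_h=0$, bound the viscous term by \eqref{eq:abstract-approximation} and the discrete Poincar\'e inequality \eqref{eq:discrete-Poincare}, and insert the result into \eqref{estimate:velocity-error}. The extension bookkeeping you flag is not a gap: the same extended pair $(\bu,p)$ defines $\bft$ on $\Omega_h$, while $\piw$ only uses $\bft|_{\Omega}$, exactly as the paper assumes.
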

\begin{proof}
Recall that the extension of $\bft|_\Omega$ is given by $\bm f=-\nu\Delta\bu+\nabla p$. For every $\bv_h\in \bZ_h$, linearity and
\eqref{eq:abstract-commuting} give
$$
\begin{aligned}
(\bm f-\bm f_h,\bv_h)_{\Omega_h} &=-\nu(\Delta\bu-\piw\Delta\bu,\bv_h)_{\Omega_h}+(\nabla p-\piw\nabla p,\bv_h)_{\Omega_h}\\
&=-\nu(\Delta\bu-\piw\Delta\bu,\bv_h)_{\Omega_h}+(\nabla(p-\pis p),\bv_h)_{\Omega_h}\\ &=-\nu(\Delta\bu-\piw\Delta\bu,\bv_h)_{\Omega_h}.
\end{aligned}
$$
Here the last identity follows from $\bZ_h\subset\bH_0(\Div;\Omega_h)$ and $\Div\bv_h=0$; in particular, it uses the global $H^1$-continuity of $\pis p$.  Therefore, \eqref{eq:abstract-approximation} and \eqref{eq:discrete-Poincare} yield
$$
\begin{aligned}
|(\bm f-\bm f_h,\bv_h)_{\Omega_h}| \lesssim \nu h^k\|\Delta\bu\|_{H^{k}(\Omega)} \|\bv_h\|_{L^2(\Omega_h)}\lesssim \nu h^k\|\bu\|_{H^{k+2}(\Omega)}\|\bv_h\|_{1,h}.
\end{aligned}
$$
Consequently, $|\bm f-\bm f_h|_{\bZ_h^*} \lesssim \nu h^k\|\bu\|_{H^{k+2}(\Omega)}$. This and \eqref{estimate:velocity-error} prove \eqref{eq:pressure-robust-error}.
\end{proof}

\begin{remark}
The proof in Corollary \ref{cor:pressure-robust} shows that the pressure-robust error estimate above does not require $\piw\bv\in\bH(\operatorname{curl};\Omega_h)$ for general $\bv$. Nevertheless, the global $H(\operatorname{curl})$-conformity of $\piw$ is established in the following, thereby obtaining a conforming commuting pair on curved meshes.
\end{remark}

\subsection{Construction of the global operator}
\label{sec-construct-global-operator}

The operators are constructed elementwise by composing the pullback from the exact element to the reference tetrahedron, the commuting interpolation operators on the reference tetrahedron, and the corresponding pushforward to the curved computational element.

Define the local operators $\pist: H^4(\rT) \to \Sigma(T)$ and $\piwt: \bH^3(\rT) \to \bW(T)$ as 
\begin{equation}
\label{def:local-projectors}
\pist :=\Phks\hpis \Rhrs, \quad 
\piwt :=\Phkw\hpiw \Rhrw,
\end{equation}
where $\hpis: H^{4}(\hT)\to\hat\Sigma(\hT)$ and $\hpiw:\bH^3(\hT)\to\hbW(\hT)$ are the interpolation operators defined by the DOFs in \cite[Lemmas~4.8 and~4.11]{fu2020exact}. 

By \cite[Theorem~4.20]{fu2020exact}, the commuting property on the reference element holds
$$
\hpiw\hnab\hat\sigma= \hnab\hpis\hat\sigma \quad \text{ for all } \hat\sigma\in H^4(\hT).
$$
This, combined with $\Rhrw\nabla=\hnab\Rhrs$ and $\Phkw\hnab=\nabla\Phks$ by the chain rule, gives 
\begin{equation}
\label{eq:local-commuting-checked}
\begin{aligned}
\piwt\nabla\sigma =\Phkw\hpiw\hnab\Rhrs\sigma =\Phkw\hnab\hpis\Rhrs\sigma =\nabla\Phks\hpis\Rhrs\sigma =\nabla\pist\sigma,
\end{aligned}    
\end{equation}
for all $\sigma \in H^4(\rT)$, as illustrated in Figure~\ref{fig:local-commutativity}. 

\begin{figure}[htbp]
\centering
\scriptsize
\begin{tikzcd}[
  row sep=5em,
  column sep=5em
]
H^4(\rT)
  \arrow[r, "\Rhrs"]
  \arrow[d, "\nabla"']
&
H^4(\hT)
  \arrow[r, "\hpis"]
  \arrow[d, "\hnab"']
&
\hat\Sigma(\hT)
  \arrow[r, "\Phks"]
  \arrow[d, "\hnab"']
&
\Sigma(T)
  \arrow[d, "\nabla"]
\\
\bH^3(\rT)
  \arrow[r, "\Rhrw"']
&
\bH^3(\hT)
  \arrow[r, "\hpiw"']
&
\hbW(\hT)
  \arrow[r, "\Phkw"']
&
\bW(T)
\end{tikzcd}

\caption{Local commuting properties.}
\label{fig:local-commutativity}
\end{figure}

Based on \eqref{def:local-projectors}, define the global operators as
$$
(\pis\sigma)|_T:=\pist(\sigma|_{\rT}), \quad (\piw\bw)|_T:=\piwt(\bw|_{\rT})\quad  \text{ for all } T \in \calTh.  
$$
It remains to verify that the elementwise operators have the asserted
global conformity: for $\sigma\in H^4(\Omega)$ and $\bw\in\bH^3(\Omega)$,
\begin{equation}\label{eq:global-compatibility}
\pis\sigma\in H^1(\Omega_h),\qquad\piw\bw\in\bH(\operatorname{curl};\Omega_h).
\end{equation}

\subsection{The conformity of the global operators} 

Let $X\in\{\Sigma,W\}$. This subsection presents an abstract framework for verifying the conformity of $\pix$, followed by its verification for the scalar and vector interpolation operators, respectively.

\subsubsection{The abstract conformity of~\texorpdfstring{$\pix$}{Pi X h}}
\label{sec:verify-abstract-X}
For a face $F$ of an element $T$, recall that $\bP_F=\bI-\bn_F\otimes\bn_F$. Let $\TrxF$ denote the trace operator associated with $X$,
\begin{equation}\label{eq:defTrsandw}
\TrsF\sigma:=\sigma|_F,\qquad\TrwF\bw:=\bP_{F}(\bw|_F).    
\end{equation}
Since $\pix$ is defined elementwise in the corresponding local finite element space, the global conformity \eqref{eq:global-compatibility} reduces to the compatibility of the local interpolation operators across each interior face $F=\partial T^+\cap\partial T^-$ with $T^\pm\in\calTh$,
\begin{equation}\label{eq:conformity}
\TrxF (\pixtp v)=\TrxF (\pixtm v).
\end{equation}
Here, $v$ is defined on $\Omega$, with $v\in H^4(\Omega)$ for $X=\Sigma$ and $v\in\bH^3(\Omega)$ for $X=W$.

To verify \eqref{eq:conformity}, introduce the face mappings and transformations associated with element mappings and transformations. For a diffeomorphism $\Phi:T_0\to T_1$, let $\mathcal{P}^{X}_{\Phi}$ and $\mathcal{R}^{X}_{\Phi}$ be the transformations defined in \eqref{eq:defPsandRs} with $X=\Sigma$ and \eqref{eq:defPwandRw} with $X=W$. For a face $F_0$ of $T_0$, set $F_1:=\Phi(F_0)\subset\partial T_1$ and $\phi:=\Phi|_{F_0}:F_0\to F_1$. For a trace function $g_i$ on $F_i$, let $g_i^e$ be any smooth extension of $g_i$ to $T_i$ satisfying
\begin{equation}\label{eq:def_gi_ext}
g_i=\TrxFi g_i^e,\qquad i=0,1.
\end{equation}
Define the face pushforward and pullback by
\begin{equation}\label{eq:def-push-face-x}
\mathcal P_{\phi}^Xg_0 = \TrxFo\mathcal P_{\Phi}^Xg_0^e,\qquad
\mathcal R_{\phi}^Xg_1 = \TrxFz\mathcal R_{\Phi}^Xg_1^e.    
\end{equation}
The following lemma shows that the face transformations are well-defined and depend only on the face diffeomorphism $\phi$.

\begin{lemma}
For $X\in\{\Sigma,W\}$, the face transformations
$\mathcal P_\phi^X$ and $\mathcal R_\phi^X$ defined in
\eqref{eq:def-push-face-x} are well defined and depend
only on $\phi$, independently of the extensions $g_i^e$
and the element diffeomorphism $\Phi$ inducing $\phi$.
More precisely, for smooth scalar functions $g_i$ on $F_i$,
\begin{equation}\label{eq:def-face-s-pull}
\mathcal R_\phi^\Sigma g_1=g_1\circ\phi,
\qquad
\mathcal P_\phi^\Sigma g_0=g_0\circ\phi^{-1},
\end{equation}
and, for smooth tangential vector functions $g_i$ on $F_i$,
\begin{equation}\label{eq:def-face-w-pull}
\mathcal R_\phi^W g_1 = (\Dtau\phi)^\trans(g_1\circ\phi), \qquad \mathcal P_\phi^W g_0 = (\Dtau\phi^{-1})^\trans(g_0\circ\phi^{-1}).
\end{equation}
\end{lemma}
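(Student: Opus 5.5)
The plan is to compute each face transformation directly from the definitions \eqref{eq:defPsandRs}--\eqref{eq:defPwandRw} and \eqref{eq:def-push-face-x}. The goal is to show that the result coincides with the intrinsic expressions \eqref{eq:def-face-s-pull} and \eqref{eq:def-face-w-pull}. Those expressions involve only $\phi$, $\Dtau\phi$ and $g_i$, so they are visibly independent of the extension $g_i^e$ and of the inducing map $\Phi$. Well-definedness and dependence on $\phi$ alone then follow at once. The pushforward formulas come from the pullback ones by applying them to $\Phi^{-1}:T_1\to T_0$, whose restriction is $\phi^{-1}$. This uses $\mathcal P^X_\Phi=\mathcal R^X_{\Phi^{-1}}$, which is immediate from the definitions. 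So I only treat $\mathcal R^X_\phi$.

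\textbf{Scalar case.} By definition, $\mathcal R^\Sigma_\phi g_1=(g_1^e\circ\Phi)|_{F_0}$. For $\hat x\in F_0$ we have $\Phi(\hat x)=\phi(\hat x)\in F_1$, and $g_1^e=g_1$ on $F_1$. Hence the result is $g_1\circ\phi$, which settles this case.

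\textbf{Vector case.} Let $x\in F_0$. By definition,
\[
\mathcal R^W_\phi g_1(x)=\bP_{F_0}(x)\,D\Phi(x)^\trans\, g_1^e(\phi(x)) = \bigl(D\Phi(x)\bP_{F_0}(x)\bigr)^\trans g_1^e(\phi(x)),
\]
where the second equality uses that $\bP_{F_0}$ is symmetric. By the identity $\Dtau\phi=(D\Phi|_{F_0})\bP_{F_0}$ recalled in Section~\ref{subsec:facemapping}, this equals $(\Dtau\phi)^\trans g_1^e(\phi(x))$. It remains to remove the extension.

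The extension only satisfies $\bP_{F_1}g_1^e=g_1$ on $F_1$, so $g_1^e$ may carry an arbitrary normal component $(g_1^e\cdot\bn_{F_1})\bn_{F_1}$. I would show that $(\Dtau\phi)^\trans\bn_{F_1}\circ\phi=0$, i.e.\ that the range of $\Dtau\phi(x)$ lies in the tangent plane of $F_1$ at $\phi(x)$. This holds because for any tangent vector $\bt$ to $F_0$, the vector $\Dtau\phi\,\bt$ is the derivative of $\phi$ along a curve in $F_0$, so it is tangent to $F_1$. Vectors normal to $F_0$ are annihilated by $\bP_{F_0}$. Consequently $(\Dtau\phi)^\trans g_1^e\circ\phi=(\Dtau\phi)^\trans(\bP_{F_1}g_1^e)\circ\phi=(\Dtau\phi)^\trans(g_1\circ\phi)$, which is the claimed formula.

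The same reasoning shows that $\Dtau\phi$ depends only on $\phi$. Indeed, it is the tangential gradient $\nabla_{F_0}\phi$, which is extension-independent by \cite[Lemma~2.4]{dziuk2013finite}.

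The main obstacle is the tangency claim $\bn_{F_1}^\trans\Dtau\phi=0$ on curved faces. I would handle it via the curve argument above, or equivalently by differentiating $\phi$ composed with a local parametrization of $F_0$. I would also check that the result is tangential on $F_0$, which holds since $(\Dtau\phi)^\trans=\bP_{F_0}D\Phi^\trans$ has range in the tangent plane of $F_0$.
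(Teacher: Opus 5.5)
Your proposal is correct and follows essentially the same route as the paper: the scalar case is identical, and in the vector case both arguments rest on $\Dtau\phi=(D\Phi|_{F_0})\bP_{F_0}$ and on $D\Phi\,\btau$ being tangent to $F_1$; both also obtain the pushforward by applying the pullback result to $\Phi^{-1}$. The only difference is cosmetic: you remove the normal component of $g_1^e$ through the matrix identity $\bP_{F_1}\Dtau\phi=\Dtau\phi$, whereas the paper pairs both tangential vectors with an arbitrary tangent vector $\btau$ on $F_0$.
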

\begin{proof}
For $X=\Sigma$, the definitions of the face pullback \eqref{eq:def-push-face-x}, the trace \eqref{eq:defTrsandw}, the element pullback \eqref{eq:defPsandRs} and the extension \eqref{eq:def_gi_ext} show
$$
(\mathcal R_\phi^\Sigma g_1)(\bx) = (\TrsFz\mathcal R_{\Phi}^\Sigma g_1^e)(\bx) = \mathcal R_{\Phi}^\Sigma g_1^e(\bx) = g_1^e(\Phi(\bx)) =
g_1(\phi(\bx)) \quad \text{ for } \bx\in F_0.
$$
Similarly, $\mathcal P_\phi^\Sigma g_0=g_0\circ\phi^{-1}$, which proves \eqref{eq:def-face-s-pull}.

For $X=W$, fix $\bx\in F_0$ and let $\btau$ be any
vector tangent to $F_0$ at $\bx$. 
It follows from \eqref{eq:def-push-face-x},  \eqref{eq:defTrsandw} and \eqref{eq:defPwandRw} that 
\begin{equation}\label{eq:Rwphi0-1}
\begin{aligned}
(\mathcal R_\phi^Wg_1)(\bx)\cdot\btau = \bigl( \bP_{F_0}(\bx)D\Phi(\bx)^\trans g_1^e(\phi(\bx))
\bigr)\cdot\btau = g_1^e(\phi(\bx))\cdot D\Phi(\bx)\btau,
\end{aligned}
\end{equation}
where $\bP_{F_0}(\bx)\btau=\btau$ is used.
Recall from  Section~\ref{subsec:facemapping} that $\Dtau\phi=(D\Phi|_{F_0})\bP_{F_0}$. Then $D\Phi(\bx)\btau=\Dtau\phi(\bx)\btau$,
and this vector is tangent to $F_1$ at $\phi(\bx)$.
Since $g_1=\TrwFo g_1^e$, it follows that
$$
\begin{aligned}
g_1^e(\phi(\bx))\cdot D\Phi(\bx)\btau
&= g_1(\phi(\bx))\cdot\Dtau\phi(\bx)\btau =
\bigl( (\Dtau\phi(\bx))^\trans g_1(\phi(\bx)) \bigr)\cdot\btau.
\end{aligned}
$$
Since $(\Dtau\phi(\bx))^\trans = \bP_{F_0}(\bx)D\Phi(\bx)^\trans$, $(\Dtau\phi(\bx))^\trans g_1(\phi(\bx))$ is tangent to $F_0$ at $\bx$. Both vectors are therefore tangential and have the same
inner product with every tangential vector $\btau$.
This and \eqref{eq:Rwphi0-1} prove the first identity in
\eqref{eq:def-face-w-pull}.
Applying the same argument to $\Phi^{-1}$, whose
restriction to $F_1$ is $\phi^{-1}$, proves the second
identity in \eqref{eq:def-face-w-pull}.
All expressions in
\eqref{eq:def-face-s-pull}--\eqref{eq:def-face-w-pull}
depend only on the face data and $\phi$.
Hence the face transformations are independent of
both the chosen extensions and the inducing element
diffeomorphism.
\end{proof}

\begin{corollary}\label{cor-facetransofrmnation}
The face transformations defined in \eqref{eq:def-push-face-x} satisfy
$\mathcal R_\phi^X=(\mathcal P_\phi^X)^{-1}$. Moreover,
\begin{subequations}
\begin{align}
\mathcal{P}^{X}_{\phi}\TrxFz&=\TrxFo\mathcal{P}^{X}_{\Phi},\quad\mathcal{R}^{X}_{\phi}\TrxFo=\TrxFz\mathcal{R}^{X}_{\Phi},\qquad&\text{(Trace compatibility)}\label{eq:face-commuit-push-x}\\
\mathcal{P}^{X}_{\psi\circ \phi} &= \mathcal{P}^{X}_{\psi}\mathcal{P}^{X}_{\phi}, \quad
\mathcal{R}^{X}_{\psi\circ \phi} = \mathcal{R}^{X}_{\phi}\mathcal{R}^{X}_{\psi},\qquad&\text{(Composition)}\label{eq:face-composit-push-x}
\end{align}
\end{subequations}
where $\psi:F_1\to F_2$ is induced by an element diffeomorphism $\Psi:T_1\to T_2$.
\end{corollary}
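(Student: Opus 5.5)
The plan is to read everything off the explicit formulas \eqref{eq:def-face-s-pull}--\eqref{eq:def-face-w-pull} from the preceding lemma. By that lemma, the face transformations are independent of the extension and of the inducing element map. So we may compute each one with whichever extension is most convenient.

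\emph{Inverse relation.} For $X=\Sigma$ this is immediate: $g_0\circ\phi^{-1}\circ\phi=g_0$ and $g_1\circ\phi\circ\phi^{-1}=g_1$. For $X=W$ we compose the two formulas:
\[
\mathcal R_\phi^W\mathcal P_\phi^W g_0=(\Dtau\phi)^\trans\big((\Dtau\phi^{-1})^\trans\circ\phi\big)g_0=\big((\Dtau\phi^{-1}\circ\phi)\,\Dtau\phi\big)^\trans g_0 .
\]
The chain rule for the tangential Jacobian gives $(\Dtau\phi^{-1}\circ\phi)\Dtau\phi=\bP_{F_0}$, since $\phi^{-1}\circ\phi=\operatorname{id}_{F_0}$ and the tangential gradient of the identity on $F_0$ is $\bP_{F_0}$. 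Because $g_0$ is tangential, $\bP_{F_0}g_0=g_0$. The reverse composition is symmetric, with $\bP_{F_1}$ in place of $\bP_{F_0}$.

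\emph{Trace compatibility.} This is essentially the definition \eqref{eq:def-push-face-x} read with a particular extension. Given a smooth $v$ on $T_0$, set $g_0:=\TrxFz v$. Then $v$ itself is an admissible extension $g_0^e$ in the sense of \eqref{eq:def_gi_ext}. Since the face pushforward is extension-independent, $\mathcal P^X_\phi\TrxFz v=\TrxFo\mathcal P^X_\Phi v$. The pullback identity follows in the same way with $\Phi^{-1}$, or directly from the definition of $\mathcal R^X_\phi$. For $X=W$, the only point to check is that $v$ is a valid extension; this holds because \eqref{eq:def_gi_ext} only requires $\TrwF v^e=g$, which is exactly how $g_0$ was defined.

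\emph{Composition.} The cleanest route is to use the explicit formulas. For $\Sigma$ this is plain composition of maps. For $W$ we use $\Dtau(\psi\circ\phi)=(\Dtau\psi\circ\phi)\,\Dtau\phi$, the tangential chain rule. This holds because $\Dtau\phi$ maps the tangent plane of $F_0$ into that of $F_1$, and $\Dtau\psi=(D\Psi|_{F_1})\bP_{F_1}$ agrees with $D\Psi$ there. We then transpose and apply the result to $g_2\circ\psi\circ\phi$, which gives $\mathcal R^W_{\psi\circ\phi}=\mathcal R^W_\phi\mathcal R^W_\psi$. The pushforward identity follows by taking inverses, using the first part of the corollary.

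Alternatively, composition follows from trace compatibility together with \eqref{eq:pushforwcomposition}, which extends to $X\in\{\Sigma,W\}$ by the element chain rule. Here $\psi\circ\phi$ is induced by $\Psi\circ\Phi$, and the chain of equalities runs through an extension $g_0^e$.

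The only genuinely delicate step is the tangential chain rule, namely verifying $\Dtau(\psi\circ\phi)=(\Dtau\psi\circ\phi)\Dtau\phi$ and $(\Dtau\phi^{-1}\circ\phi)\Dtau\phi=\bP_{F_0}$ on curved faces. To settle it, I would write $\Dtau\phi=D\Phi\,\bP_{F_0}$ and use that $D\Phi$ maps tangent planes to tangent planes, so that $\bP_{F_1}D\Phi\,\bP_{F_0}=D\Phi\,\bP_{F_0}$.
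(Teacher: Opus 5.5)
Your trace-compatibility step matches the paper: $v$ is its own admissible extension. For the inverse relation and for composition, however, your main route differs from the paper's.

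The paper's proof of this corollary never uses the explicit formulas \eqref{eq:def-face-s-pull}--\eqref{eq:def-face-w-pull}. Once trace compatibility is available, it lifts everything to the element level. For example, $\mathcal R^X_\phi\mathcal P^X_\phi g_0=\mathcal R^X_\phi\TrxFo(\mathcal P^X_\Phi g_0^e)=\TrxFz(\mathcal R^X_\Phi\mathcal P^X_\Phi g_0^e)=g_0$. It proves composition through the inducing map $\Psi\circ\Phi$ and the element identity $\mathcal P^X_{\Psi\circ\Phi}=\mathcal P^X_\Psi\mathcal P^X_\Phi$, which is exactly your ``alternative''. Your remark that this element identity, stated in \eqref{eq:pushforwcomposition} only for $V,Q$, extends to $\Sigma,W$ by the chain rule supplies a step the paper leaves implicit.

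The paper's route needs the preceding lemma only for well-definedness. It treats $X=\Sigma$ and $X=W$ uniformly and never differentiates on the curved face. It would therefore carry over unchanged to any transformation family whose face transformations are well defined.

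Your formula-based route is more concrete, but it has to establish the tangential chain rules $\Dtau(\psi\circ\phi)=(\Dtau\psi\circ\phi)\Dtau\phi$ and $(\Dtau\phi^{-1}\circ\phi)\Dtau\phi=\bP_{F_0}$. Your justification is correct: $\Dtau\phi=(D\Phi|_{F_0})\bP_{F_0}$, $\bP_{F_1}D\Phi\,\bP_{F_0}=D\Phi\,\bP_{F_0}$, and $\phi^{-1}$ is induced by $\Phi^{-1}$.

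One small point: \eqref{eq:def-face-w-pull} is stated for tangential inputs. When you re-apply it, you should note that the intermediate fields $\mathcal P^W_\phi g_0$ and $\mathcal R^W_\psi g_2$ are tangential to $F_1$. This holds because $(\Dtau\phi^{-1})^\trans=\bP_{F_1}\bigl(D(\Phi^{-1})\bigr)^\trans$ and $(\Dtau\psi)^\trans=\bP_{F_1}(D\Psi)^\trans$ both have range in the tangent plane of $F_1$.
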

\begin{proof}
Let $X\in\{\Sigma,W\}$.
For any sufficiently smooth function $v$ on $T_0$,
$v$ itself is an admissible extension of
$\TrxFz v$. Thus, \eqref{eq:def-push-face-x} gives $\mathcal P_\phi^X(\TrxFz v)= \TrxFo(\mathcal P_\Phi^Xv)$.
Applying the same argument to a function on $T_1$
proves the pullback identity in
\eqref{eq:face-commuit-push-x}.
To prove the inverse property, let $g_i$ be a trace
function on $F_i$, and choose an extension $g_i^e$
satisfying $\TrxFi g_i^e=g_i$, for $i=0,1$.
By the trace compatibility \eqref{eq:face-commuit-push-x} and the inverse properties
of the element transformations,
$$
\mathcal R_\phi^X\mathcal P_\phi^Xg_0
= \mathcal R_\phi^X\TrxFo (\mathcal P_\Phi^Xg_0^e) =
\TrxFz (\mathcal R_\Phi^X\mathcal P_\Phi^Xg_0^e)
= \TrxFz g_0^e = g_0.
$$
Similarly, one can obtain  $\mathcal P_\phi^X\mathcal R_\phi^Xg_1 = \TrxFo g_1^e = g_1$. Hence $\mathcal R_\phi^X=(\mathcal P_\phi^X)^{-1}$.
Finally, $\Psi \circ\Phi$ induces $\psi\circ\phi$
on $F_0$.
The composition property of the element
transformations and the trace compatibility \eqref{eq:face-commuit-push-x} yield
$$
\begin{aligned}
\mathcal P_{\psi\circ\phi}^Xg_0 =
\TrxFt(\mathcal P_{\Psi \circ\Phi}^Xg_0^e) =
\TrxFt(\mathcal P_\Psi^X\mathcal P_\Phi^Xg_0^e) =
\mathcal P_\psi^X \TrxFo(\mathcal P_\Phi^Xg_0^e) =
\mathcal P_\psi^X\mathcal P_\phi^Xg_0.
\end{aligned}
$$
Taking inverses gives $\mathcal R_{\psi\circ\phi}^X =
(\mathcal P_\psi^X\mathcal P_\phi^X)^{-1} =
\mathcal R_\phi^X\mathcal R_\psi^X$, which completes the proof.
\end{proof}

Consider an interior face $F:=\partial T^+\cap\partial T^-$. Let $\rT^\pm:=\Theta_h(T^\pm)$ and $\rF:=\Theta_h(F)$ be the exact curved elements and their common face, respectively. Set $$\Fhkpm:=\FOhkpm:\hT\to T^\pm,\qquad\Fhrpm:=\FOhrpm:\hT\to\rT^\pm.$$
Let $\hF^\pm:=(\Fhkpm)^{-1}(F)\subset\partial\hT$. Denote the corresponding face mappings by
\begin{equation}\label{eq:def-face-mapping}
\FFhkpm:=\Fhkpm|_{\hF^\pm}:\hF^\pm\to F,\qquad\FFhrpm:=\Fhrpm|_{\hF^\pm}:\hF^\pm\to \rF.   
\end{equation}
Define the affine mapping $\gamma_F:= (\FFhkm)^{-1} \FFhkp: \hF^+\to\hF^-$, then $\FFhkp=\FFhkm\circ\gamma_F$. Moreover, the compatibility of the mapping $\Theta_h$ across $F$ gives $\FFhrp=\FFhrm\circ\gamma_F$.
The face map $\gamma_F$ extends to an affine bijection $\gamma:\hat T\to\hat T$ that maps the vertex opposite $\hat F^+$ to that opposite $\hat F^-$, i.e., $\gamma_F=\gamma|_{\hF^+}$.
The composition property in Corollary~\ref{cor-facetransofrmnation} therefore applies and yields
\begin{equation}\label{eq:facemapgamma}
\PFhkxp=\mathcal{P}^X_{\FFhkm\circ\gamma_F}=\PFhkxm\PFhhx,\qquad\RFhrxp=\mathcal{R}^X_{\FFhrm\circ\gamma_F}=\RFhhx\RFhrxm.
\end{equation}
The first identity, together with the inverse property in Corollary~\ref{cor-facetransofrmnation}, gives
\begin{equation}
\label{eq:facemapgamma2}
\PFhkxm = \PFhkxp (\PFhhx)^{-1} = \PFhkxp \RFhhx.
\end{equation}

\begin{theorem}\label{thm:abstract-pi-conformity}
For each face $\hF$ of the reference tetrahedron $\hT$, assume that there is a face interpolation operator $\pixhF$ satisfying
\begin{subequations}\label{eq:faceInterpolation-x-assmue}
\begin{align}
\pixhF\TrxhF&=\TrxhF\pixhT.\qquad&\text{(Trace compatibility)}\label{eq:commuit-pix}\\
\pixhFp\RFhhx&=\RFhhx\pixhFm.\qquad&\text{(Pullback compatibility)}\label{eq:commuit-pixhFTrx}
\end{align}
\end{subequations}
Then the elementwise interpolants have matching traces across any interior face $F$:
\begin{equation}\label{eq:conforminginterpolation}
\TrxF(\pixtp v^+)=\TrxF(\pixtm v^-),\qquad v^\pm= v|_{\rT^\pm}.
\end{equation}
\end{theorem}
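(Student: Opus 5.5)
The plan is to unwind the definition $\pixtpm=\mathcal P^X_{\Fhkpm}\,\hat\Pi^X_{\hT}\,\mathcal R^X_{\Fhrpm}$ through traces on the common face. Everything will be reduced to an identity between face functions on the reference faces $\hF^\pm$, and then transported to $F$.

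First, apply the trace compatibility \eqref{eq:face-commuit-push-x} of Corollary~\ref{cor-facetransofrmnation} to the pushforward $\mathcal P^X_{\Fhkpm}$ with face map $\FFhkpm:\hF^\pm\to F$. This gives
$$
\TrxF(\pixtpm v^\pm)=\mathcal P^X_{\FFhkpm}\,\TrxhFpm\,\pixhT\,\mathcal R^X_{\Fhrpm}v^\pm .
$$
Next, use the assumed trace compatibility \eqref{eq:commuit-pix} on $\hF^\pm$ to move the trace inside the interpolant. Then use \eqref{eq:face-commuit-push-x} again for the pullback along $\Fhrpm$, with face map $\FFhrpm:\hF^\pm\to\rF$. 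Together these give
$$
\TrxF(\pixtpm v^\pm)=\mathcal P^X_{\FFhkpm}\,\pixhFpm\,\mathcal R^X_{\FFhrpm}\,\operatorname{tr}^X_{\rF} v^\pm .
$$
Since $v$ is defined and smooth on all of $\Omega$ ($H^4\subset C^1$, respectively $\bH^3\subset \bC^0$ for the tangential trace), we have $\operatorname{tr}^X_{\rF}v^+=\operatorname{tr}^X_{\rF}v^-=:g$. This identification is where the regularity of $v$ and the fitting property of $\Theta_h$ across $\rF$ are used.

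It then remains to show that $\mathcal P^X_{\FFhkp}\pixhFp\RFhrxp g=\mathcal P^X_{\FFhkm}\pixhFm\RFhrxm g$. Substitute $\RFhrxp=\RFhhx\RFhrxm$ from \eqref{eq:facemapgamma} on the left. Then apply the pullback compatibility \eqref{eq:commuit-pixhFTrx}, $\pixhFp\RFhhx=\RFhhx\pixhFm$, which turns the left side into $\mathcal P^X_{\FFhkp}\RFhhx\pixhFm\RFhrxm g$. Finally, \eqref{eq:facemapgamma2} gives $\mathcal P^X_{\FFhkp}\RFhhx=\PFhkxm$, so the left side equals the right side, proving \eqref{eq:conforminginterpolation}.

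The main obstacle is bookkeeping rather than analysis. One must check that the face transformations are applied to genuine trace functions on the right faces, so that Corollary~\ref{cor-facetransofrmnation} is legitimately invoked with the induced face maps $\FFhkpm,\FFhrpm,\gamma_F$. One must also confirm that $\gamma_F$ is induced by an element diffeomorphism, here the affine $\gamma$, so that \eqref{eq:facemapgamma} holds. A further point is that the compatibility $\FFhrp=\FFhrm\circ\gamma_F$ of $\Theta_h$ across $F$ is exactly what makes the two pullbacks of $g$ comparable. For $X=W$ care is needed that the traces are tangential projections and the face pullbacks are the tangential formulas \eqref{eq:def-face-w-pull}, but the lemma preceding Corollary~\ref{cor-facetransofrmnation} already makes these independent of the inducing element map.
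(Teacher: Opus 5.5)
Your proposal is correct and follows essentially the same route as the paper's proof. Like the paper, you first use the trace compatibility of the pushforward and pullback together with the assumed face-trace compatibility to reduce each side to $\PFhkxpm\pixhFpm\RFhrxpm g$, where $g$ is the single trace on the exact face. You then conclude with $\RFhrxp=\RFhhx\RFhrxm$, the pullback compatibility, and \eqref{eq:facemapgamma2}.
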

\begin{proof}
Fix an interior face $F \in \calFhI$ and its corresponding exact
face $\rF = \Fkrh(F)$. Since $v$ has a single trace on $\rF$, define
$$
g:=\TrxrF v^+=\TrxrF v^-,
\qquad
\hat v^\pm:=\Rhrxpm v^\pm,
\qquad
\hat g^\pm:=\RFhrxpm g.
$$
By the trace compatibility of the pullback \eqref{eq:face-commuit-push-x}, $\TrxhFpm\hat v^\pm = \RFhrxpm\TrxrF v^\pm = \hat g^\pm$.
Then the definition of the local element interpolation operators in \eqref{def:local-projectors}, trace compatibility of the pushforward in \eqref{eq:face-commuit-push-x}, and the trace compatibility in \eqref{eq:commuit-pix} give
\begin{equation}\label{eq:face-factorization}
\begin{aligned}
\TrxF(\pixtpm v^\pm)
&=\TrxF\bigl(\Phkxpm\pixhT\hat v^\pm\bigr) =\PFhkxpm\TrxhFpm\bigl(\pixhT\hat v^\pm\bigr)\\
&=\PFhkxpm\pixhFpm\bigl(\TrxhFpm\hat v^\pm\bigr) =\PFhkxpm\pixhFpm\hat g^\pm.
\end{aligned}
\end{equation}
Thus, the trace of each element interpolant is determined by the corresponding face interpolant, as illustrated in Figure~\ref{fig:global-trace-compatibility}(a).
The second face transformation identity in
\eqref{eq:facemapgamma}, the pullback compatibility in \eqref{eq:commuit-pixhFTrx}, and \eqref{eq:facemapgamma2} show
$$
\PFhkxp\pixhFp\hat g^+  = \PFhkxp \pixhFp \RFhhx \hat g^- =\PFhkxp\RFhhx\pixhFm\hat g^- =\PFhkxm\pixhFm\hat g^-.
$$
By \eqref{eq:face-factorization}, this implies that
$\TrxF(\pixtp v^+) = \TrxF(\pixtm v^-)$. 
These relations are illustrated in
Figure~\ref{fig:global-trace-compatibility}(b).
Since $F$ is arbitrary, the proof is complete. 
\end{proof}

\begin{figure}[htbp]
\centering
\begin{minipage}[t]{0.49\textwidth}
\centering
\scriptsize
\begin{tikzcd}[
  row sep=2.8em,
  column sep=2.2em
]
v^\pm
  \arrow[r, "\TrxrF"]
  \arrow[d, "\Rhrxpm"']
&
g
  \arrow[d, "\RFhrxpm"]
\\
\hat v^\pm
  \arrow[r, "\TrxhFpm"]
  \arrow[d, "\pixhT"']
&
\hat g^\pm
  \arrow[d, "\pixhFpm"]
\\
\pixhT\hat v^\pm
  \arrow[r, "\TrxhFpm"]
  \arrow[d, "\Phkxpm"']
&
\pixhFpm\hat g^\pm
  \arrow[d, "\PFhkxpm"]
\\
\pixtpm v^\pm
  \arrow[r, "\TrxF"]
&
\TrxF(\pixtpm v^\pm)
\end{tikzcd}

\smallskip
{\small (a) From element interpolation to face interpolation.}
\end{minipage}
\hfill
\begin{minipage}[t]{0.49\textwidth}
\centering
\scriptsize
\begin{tikzcd}[
  row sep=2.8em,
  column sep=0.8em
]
&
g
  \arrow[dl, "\RFhrxp"']
  \arrow[dr, "\RFhrxm"]
&
\\
\hat g^+
  \arrow[d, "\pixhFp"']
&&
\hat g^-
  \arrow[ll, "\RFhhx"']
  \arrow[d, "\pixhFm"]
\\
\pixhFp\hat g^+
  \arrow[d, "\PFhkxp"']
&&
\pixhFm\hat g^-
  \arrow[ll, "\RFhhx"']
  \arrow[d, "\PFhkxm"]
\\
\TrxF(\pixtp v^+)
  \arrow[rr, equal]
&&
\TrxF(\pixtm v^-)
\end{tikzcd}

\smallskip
{\small (b) Matching traces across the common face.}
\end{minipage}
\caption{Compatibility of the interpolated traces.}
\label{fig:global-trace-compatibility}
\end{figure}

\begin{remark}
The local commuting identity \eqref{eq:local-commuting-checked}, together with the global conformity established above, yields a pair of globally conforming commuting interpolation operators on curved meshes. This construction is not specific to the present pair: it extends to other reference commuting operators whenever the corresponding transformations preserve the commuting identities and the face interpolation operators satisfy the required compatibility conditions.
\end{remark}

\subsubsection{The conformity for \texorpdfstring{$\pis$}{Pi Sigma h}}\label{sec:verXbeS}
According to Theorem~\ref{thm:abstract-pi-conformity}, it remains to construct a face interpolation operator satisfying \eqref{eq:faceInterpolation-x-assmue}. Such an operator is obtained from the DOFs defining $\hpis$ in \cite[Lemma~4.8]{fu2020exact} that determine the trace on a face.

Given a face $\hF$ of $\hT$, let $\mathcal{V}(\hF)$ and $\mathcal{E}(\hF)$ denote the sets of vertices and edges of $\hF$, respectively, and fix two orthonormal tangent vectors $\hat\btau_1$ and $\hat\btau_2$ to $\hF$. 
For $\hat e \in \mathcal{E}(\hF)$, let $\hat\bn_{\hat e,\hF}$ denote the unit vector parallel to $\hF$ and outward normal to $\hat e$.

\begin{lemma}\label{lem:trace-interpolation}
Set $r:=k+2 \geq 5$.
A function $q\in P_r(\hF)$ is uniquely determined by the following DOFs
\begin{subequations}\label{eq:trace-dofs}
\begin{align}
&\partial_{\hat\btau_1}^{\alpha_1}
 \partial_{\hat\btau_2}^{\alpha_2}q(\hat a)
&& \text{ for all } 0\leq\alpha_1+\alpha_2\le2, 
\, \hat a\in\mathcal{V}(\hF),
\label{eq:trace-dofs-a}\\
&(q,\psi)_{\hat e}
&& \text{ for all } \psi\in P_{r-6}(\hat e),\, \hat e\in\mathcal{E}(\hF),
\label{eq:trace-dofs-b}\\
&(\partial_{\hat\bn_{\hat e,\hF}}q,\eta)_{\hat e}
&& \text{ for all } \eta\in P_{r-5}(\hat e), \, \hat e\in\mathcal{E}(\hF),
\label{eq:trace-dofs-c}\\
&(q,\psi)_{\hF} && \text{ for all } \psi\in P_{r-6}(\hF).
\label{eq:trace-dofs-d}
\end{align}
\end{subequations}
Let $\pishF:H^{7/2}(\hF)\to P_r(\hF)$ denote the corresponding
interpolation operator. Then
\begin{subequations}\label{eq:interfacesproperty}
\begin{align}
\TrshF(\hpis\hat\sigma)&=\pishF(\TrhF\hat\sigma),\qquad\quad\,\, \text{ for all }  \hat\sigma\in H^4(\hT),\label{eq:trace-interpolation}\\
\pishFp(\RFhhs \hat{g}^-)&=\RFhhs (\pishFm \hat{g}^-), \qquad \text{ for all }  \hat{g}^-\in H^{7/2}(\hF^-).\label{eq:trace-affine-covariance}
\end{align}    
\end{subequations}
\end{lemma}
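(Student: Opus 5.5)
The proof splits into three parts, taken in this order: unisolvence of \eqref{eq:trace-dofs}, the trace identity \eqref{eq:trace-interpolation}, and the affine covariance \eqref{eq:trace-affine-covariance}.

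\textbf{Unisolvence.} First count the degrees of freedom:
\begin{itemize}
\item vertex DOFs \eqref{eq:trace-dofs-a}: $3\cdot 6=18$;
\item edge moments \eqref{eq:trace-dofs-b}: $3(r-5)$;
\item normal-derivative moments \eqref{eq:trace-dofs-c}: $3(r-4)$;
\item interior moments \eqref{eq:trace-dofs-d}: $(r-5)(r-4)/2$.
\end{itemize}
The total is $(r+1)(r+2)/2=\dim P_r(\hF)$. It therefore suffices to show that $q\in P_r(\hF)$ with all DOFs zero vanishes.

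Fix an edge $\hat e$. The restriction $q|_{\hat e}\in P_r(\hat e)$ vanishes to second order at both endpoints, because the tangential derivative along $\hat e$ up to order two is a combination of the DOFs \eqref{eq:trace-dofs-a}. Hence $q|_{\hat e}=b^3 s$, where $b$ is the edge bubble and $s\in P_{r-6}(\hat e)$. Testing \eqref{eq:trace-dofs-b} with $\psi=s$ gives $q|_{\hat e}=0$.

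Next, $\partial_{\hat\bn_{\hat e,\hF}}q|_{\hat e}\in P_{r-1}(\hat e)$ vanishes to first order at the endpoints, since its value and tangential derivative there are again combinations of \eqref{eq:trace-dofs-a}. So it equals $b^2 t$ with $t\in P_{r-5}(\hat e)$, and \eqref{eq:trace-dofs-c} forces it to vanish.

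Thus $q=\hat\lambda_1^2\hat\lambda_2^2\hat\lambda_3^2 w$ with $w\in P_{r-6}(\hF)$, and \eqref{eq:trace-dofs-d} with $\psi=w$ gives $q=0$. This is the standard $C^1$ Argyris/Hermite-type argument, the planar analogue of the face DOFs in \cite[Lemma~4.8]{fu2020exact}.

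\textbf{Trace identity.} For $\hat\sigma\in H^4(\hT)$, the function $\hpis\hat\sigma$ is a $C^1$ Alfeld-split function of degree $r$. On a face $\hF$ of $\hT$, which is a face of a single subtetrahedron, $\TrshF\hpis\hat\sigma\in P_r(\hF)$. Among the DOFs of \cite[Lemma~4.8]{fu2020exact}, those associated with $\hF$, its edges and its vertices include the quantities in \eqref{eq:trace-dofs}: the vertex derivatives up to order two (in fact the full Hessian at vertices), the edge moments, the in-face normal derivative moments along edges (a tangential component of the edge gradient DOFs), and the face moments. Since $\hpis$ reproduces these values for $\hat\sigma$, $\TrshF\hpis\hat\sigma$ and $\pishF\TrhF\hat\sigma$ share all DOFs \eqref{eq:trace-dofs}. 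Unisolvence then gives \eqref{eq:trace-interpolation}.

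One point must be checked: each DOF in \eqref{eq:trace-dofs} has to be spanned by the DOFs of \cite{fu2020exact}, so that it is actually interpolated. Where \cite{fu2020exact} uses a different basis of edge-normal directions or different test spaces, a change of basis argument handles it: the in-face normal $\hat\bn_{\hat e,\hF}$ is a combination of the tangent along $\hat e$ and the two chosen normals, and the tangential part is determined by the trace on $\hat e$.

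\textbf{Affine covariance.} This is the main obstacle. Since $\gamma_F$ is affine, $\RFhhs$ is composition with $\gamma_F$. I will show that the DOF set of $\hF^+$ equals the pullback of the DOF set of $\hF^-$, in the sense that each functional $\ell^+\circ\RFhhs$ lies in the span of the functionals $\{\ell^-\}$, and conversely.

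\begin{itemize}
\item \emph{Vertex DOFs.} $\gamma_F$ maps vertices to vertices. By the chain rule, derivatives up to order two of $g\circ\gamma_F$ at $\hat a$ are linear combinations (with the constant matrix $D\gamma_F$) of derivatives up to order two of $g$ at $\gamma_F(\hat a)$. So the span of \eqref{eq:trace-dofs-a} is invariant.
\item \emph{Edge and face moments.} An affine change of variables maps $P_{r-6}$ to itself and scales the measures by constants, so \eqref{eq:trace-dofs-b} and \eqref{eq:trace-dofs-d} are invariant.
\item \emph{Normal moments.} $D\gamma_F\hat\bn_{\hat e,\hF^+}$ is a combination of $\hat\bn_{\gamma(\hat e),\hF^-}$ and the edge tangent. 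The tangential derivative moments $(\partial_t g,\eta)_{\hat e}$ with $\eta\in P_{r-5}$ reduce, by integration by parts, to vertex values and moments against $P_{r-6}$, both already in the span.
\end{itemize}

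Hence $\pishFm$ and $\RFhhs^{-1}\pishFp\RFhhs$ are both projections onto $P_r(\hF^-)$ determined by the same span of functionals. They therefore coincide, which proves \eqref{eq:trace-affine-covariance}.
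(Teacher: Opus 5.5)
Your proposal is correct and follows essentially the same route as the paper. It uses the same dimension count and edge-vanishing argument for unisolvence, where you spell out the $b^3s$ and $b^2t$ factorizations that the paper leaves implicit. It reduces \eqref{eq:trace-interpolation} in the same way, to the face functionals lying in the span of the element DOFs of \cite[Lemma~4.8]{fu2020exact}. It then handles \eqref{eq:trace-affine-covariance} with the same chain-rule, change-of-variables and integration-by-parts check.

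The one difference is cosmetic. You conclude from equality of the two spans of functionals, whereas the paper needs only the one-sided inclusion of $\ell^+\circ\RFhhs$ in the span of the DOFs on $\hF^-$, combined with unisolvence on $\hF^+$.
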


\begin{proof}
A dimension count shows the number of DOFs in \eqref{eq:trace-dofs} equals $\dim P_r(\hF)$. Suppose that $q\in P_r(\hF)$ vanishes on all these DOFs. Then \eqref{eq:trace-dofs-a}--\eqref{eq:trace-dofs-c} imply $q|_{\hat e}=0$ and $\partial_{\hat\bn_{\hat e,\hF}}q|_{\hat e}=0$ hold for all $\hat e\in \mathcal{E}(\hF)$. Hence, $q=(\hat\lambda_1\hat\lambda_2\hat\lambda_3)^2p$ for some $p\in P_{r-6}(\hF)$, where $\hat\lambda_1,\hat\lambda_2,\hat\lambda_3$ are the barycentric coordinates on $\hF$. Taking $\psi=p$ in \eqref{eq:trace-dofs-d} gives $0=(q,p)_{\hF}=\big((\hat\lambda_1\hat\lambda_2\hat\lambda_3)^2p,p\big)_{\hF}$, and hence $p=0$. Thus $q=0$, which proves the unisolvence of \eqref{eq:trace-dofs}.

For each face DOF $\ell$ in \eqref{eq:trace-dofs}, define $\widetilde\ell(\hat\sigma):=\ell(\TrshF\hat\sigma)$. By \cite[Lemma~4.8]{fu2020exact}, $\widetilde\ell$ is a linear combination of the element DOFs $\ell_{\hT}$ defining $\hpis$. Since $\hpis$ preserves these element DOFs: $\ell_{\hT}(\hpis\hat\sigma)=\ell_{\hT}(\hat\sigma)$,  this gives $\widetilde\ell(\hpis\hat\sigma)=\widetilde\ell(\hat\sigma)$. Therefore,
$$
\ell\big(\TrshF(\hpis\hat\sigma)\big)
=\widetilde{\ell}(\hpis\hat\sigma)=\widetilde{\ell}(\hat\sigma)=\ell\big(\TrshF(\hat\sigma)\big)=\ell\big(\pishF(\TrshF\hat\sigma)\big).
$$
Since both sides of \eqref{eq:trace-interpolation} belong to
$P_r(\hF)$, the unisolvence of \eqref{eq:trace-dofs} yields \eqref{eq:trace-interpolation}.

For any DOF $\ell^+$ in \eqref{eq:trace-dofs} on $\hF^+$, define $\ell_{\gamma_F}^-(\hat{g}^-):=\ell^+(\RFhhs \hat{g}^-)$. 
Claim that $\ell_{\gamma_F}^-$ is a linear combination of the DOFs $\ell^-$ in \eqref{eq:trace-dofs} on $\hF^-$.
Since $\pishFm$ preserves every DOF $\ell^-$: $\ell^-(\pishFm\hat g^-)=\ell^-(\hat g^-)$, the linearity of $\ell_{\gamma_F}^-$ then gives $\ell_{\gamma_F}^-(\pishFm\hat g^-)=\ell_{\gamma_F}^-(\hat g^-)$.
Consequently,
\begin{equation*}
\ell^+\bigl(\RFhhs(\pishFm\hat g^-)\bigr)
=\ell_{\gamma_F}^-(\pishFm\hat g^-)
=\ell_{\gamma_F}^-(\hat g^-)
=\ell^+(\RFhhs\hat g^-)
=\ell^+\bigl(\pishFp(\RFhhs\hat g^-)\bigr),    
\end{equation*}
where the last equality follows from the definition of
$\pishFp$. Since $\gamma_F$ is affine, both $\pishFp(\RFhhs\hat g^-)$ and $\RFhhs(\pishFm\hat g^-)$ belong to $P_r(\hF^+)$.
The unisolvence of \eqref{eq:trace-dofs} on $\hF^+$ therefore yields \eqref{eq:trace-affine-covariance}.

It remains to show that $\ell_{\gamma_F}^-$ is a linear combination of the DOFs in \eqref{eq:trace-dofs} on $\hF^-$. This follows from the chain rule for \eqref{eq:trace-dofs-a} and from a change of variables for \eqref{eq:trace-dofs-b} and \eqref{eq:trace-dofs-d}.
For a DOF in \eqref{eq:trace-dofs-c} associated with an edge $\hat e^+\in\mathcal{E}(\hF^+)$, let
$\hat e^-=\gamma_F(\hat e^+)$ and let~$\hat \bt^-$ be a unit tangent vector to $\hat e^-$.
Since $\gamma_F$ is affine,
$\partial_{\hat\bn_{\hat e^+,\hF^+}}(\RFhhs\hat g^-)
=\big(c_1 \,\partial_{\hat \bt^-}\hat g^-
+c_2\,\partial_{\hat\bn_{\hat e^-,\hF^-}}\hat g^-
\big)\circ\gamma_F$ for some constants $c_1$ and $c_2$. Thus, after a change of variables, $\ell_{\gamma_F}^-(\hat g^-)$ is a linear combination of
$$(\partial_{\hat \bt^-}\hat g^-,\eta^-)_{\hat e^-}\quad\text{and}\quad(\partial_{\hat\bn_{\hat e^-,\hF^-}}\hat g^-,\eta^-)_{\hat e^-},\qquad \eta^-\in P_{r-5}(\hat e^-),$$
where the second term is a DOF in \eqref{eq:trace-dofs-c}. For the first term, let $\hat a_1^-$ and $\hat a_2^-$ be the endpoints of $\hat e^-$. Integration by parts gives
$
(\partial_{\hat \bt^-}\hat g^-,\eta^-)_{\hat e^-} = \left.\hat g^-\eta^-\right|_{\hat a_1^-}^{\hat a_2^-} - (\hat g^-,\partial_{\hat \bt^-}\eta^-)_{\hat e^-}
$, which is a linear combination of the DOFs in \eqref{eq:trace-dofs-a} and \eqref{eq:trace-dofs-b} since $\partial_{\hat \bt^-}\eta^-\in P_{r-6}(\hat e^-)$.
Thus $\ell_{\gamma_F}^-$ is a linear combination of the DOFs in
\eqref{eq:trace-dofs} on $\hF^-$, which completes the proof.
\end{proof}

The identities established in Lemma~\ref{lem:trace-interpolation} verify the hypotheses of Theorem~\ref{thm:abstract-pi-conformity}
for $X=\Sigma$. Therefore, $\pis\sigma\in H^1(\Omega_h)$ for every $\sigma\in H^4(\Omega)$.

\subsubsection{The conformity for \texorpdfstring{$\piw$}{Pi W h}}
\label{sec:verXbeW}

To apply Theorem~\ref{thm:abstract-pi-conformity}, this subsection constructs a face interpolation operator satisfying \eqref{eq:faceInterpolation-x-assmue}, which is obtained from the DOFs defining $\hpiw$ in \cite[Lemma~4.11]{fu2020exact} that determine the tangential trace on a face.

For $s\geq0$, set $\bP_{s}^{\,\mathrm{tan}}(\hat F) :=
\{ \hbw\in \bP_{s}(\hat F): \hbw\cdot\hat{\boldsymbol n}_{\hF}=0\}$ and $\bP_{s}^{\,\mathrm{tan}}(\hat e; \hat F):= \{ \hbw\in \bP_{s}(\hat e): \hbw\cdot\hat{\boldsymbol n}_{\hF}=0\}$.
For smooth functions $\hat q$ and tangential vector field $\hat\bw$ on $\hF$, define $\grad_{\hF}\hat q:=\nabla_{\hF}\hat q$, $\rot_{\hF}\hat q:=\nabla_{\hF}\hat q\times\hat\bn_{\hF}$, $\Div_{\hF}\hbw:=\operatorname{tr}(\nabla_{\hF}\hbw)$ and
$\curl_{\hF}\hbw:=\Div_{\hF}(\hbw\times\hat\bn_{\hF})$.
Since $\hF$ is planar, these definitions agree with those in~\cite{fu2020exact}. Stokes theorem on $\hF$ yields
\begin{equation}\label{eq:surface-Stokes}
(\curl_{\hF}\hat{\bw},\hat q)_{\hF} - (\hat{\bw},\rot_{\hF}\hat q)_{\hF} = (\hat{\bw}\cdot\hat{\bt},\hat q)_{\partial\hF},    
\end{equation}
where $\hat\bt$ is the unit tangent vector along $\partial\hF$ with the induced orientation. Define $\boldsymbol{\mathcal D}_{s}(\hF)\coloneqq \bP_{s-1}^{\,\mathrm{tan}}(\hF)+(\boldsymbol{\hx}-\boldsymbol{\hx}_{\hF}^c)P_{s-1}(\hF)$ as the local Raviart--Thomas space on $\hF$, with $\boldsymbol{\hx}_{\hF}^c$ the barycenter of $\hF$.
Define $\bH^{5/2}_{\tan}(\hF):=\{\hat\bg\in\bH^{5/2}(\hF):\hat\bg\cdot\hbn_{\hF}=0\}$.

\begin{lemma}
\label{lem:W-face-dofs} 
Set $r:=k+2 \geq 5$. A function $\hbw\in \bP_{r-1}^{\,\mathrm{tan}}(\hF)$ is uniquely determined by the following DOFs:
\begin{subequations}
\label{eq:W-face-dofs}
\begin{align}
& \partial_{\hat{\boldsymbol\tau}_1}^{\alpha_1}
\partial_{\hat{\boldsymbol\tau}_2}^{\alpha_2}
(\hbw\cdot\hat{\boldsymbol\tau}_j)(\hat a),
&& \text{ for all } 0\leq\alpha_1+\alpha_2\le1,j=1,2,\,\hat a\in\mathcal V(\hF),
\label{eq:W-face-dofs:a} \\
& (\hbw,\hat{\boldsymbol\kappa})_{\hat e},
&& \text{ for all } \hat{\boldsymbol\kappa}\in
\bP_{r-5}^{\,\mathrm{tan}}(\hat e;\hF), \, \hat e\in\mathcal E(\hF),
\label{eq:W-face-dofs:b} \\
& (\curl_{\hF}\hbw,\hat\eta)_{\hat e}, && \text{ for all }
\hat\eta\in P_{r-4}(\hat e), \,
\hat e\in\mathcal E(\hF),
\label{eq:W-face-dofs:c} \\
& (\hbw,\hat{\boldsymbol\kappa})_{\hF},
&& \text{ for all } \hat{\boldsymbol\kappa}\in\boldsymbol{\mathcal D}_{r-5}(\hF).
\label{eq:W-face-dofs:d}
\end{align}
\end{subequations}
Let $\piwhF: \bH^{5/2}_{\tan}(\hF)\to \bP_{r-1}^{\,\mathrm{tan}}(\hF)$ be the interpolation operator of \eqref{eq:W-face-dofs}. Then
\begin{subequations}
\begin{align}
\TrwhF(\hpiw\hbw)
&=\piwhF(\TrwhF\hbw),\qquad\,\,\text{ for all } 
\hbw \in\bH^3(\hat T), \label{eq:W-face-interpolation}\\
\piwhFp(\RFhhw\hat\bg^-)&= \RFhhw(\piwhFm\hat\bg^-),\qquad\text{for all }~\hat\bg^-\in\bH^{5/2}_{\tan}(\hF^-).\label{eq:W-face-affine}
\end{align} 
\end{subequations}
\end{lemma}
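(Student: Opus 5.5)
The proof will follow the same three steps as the proof of Lemma~\ref{lem:trace-interpolation}: unisolvence, trace compatibility, and pullback compatibility.

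\emph{Unisolvence.} First I will count DOFs and check that their number equals $\dim\bP_{r-1}^{\,\mathrm{tan}}(\hF)=r(r+1)$.
\begin{itemize}
\item The vertex values and first tangential derivatives of the two components give $18$.
\item The edge moments in \eqref{eq:W-face-dofs:b} give $3\cdot2(r-4)$.
\item The edge curl moments in \eqref{eq:W-face-dofs:c} give $3(r-3)$.
\item The interior moments against $\boldsymbol{\mathcal D}_{r-5}(\hF)$ give $(r-5)(r-3)$.
\end{itemize}
The total should match $r(r+1)$; this is consistent with \cite[Lemma~4.11]{fu2020exact}.

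Next, suppose $\hbw$ annihilates all DOFs. On each edge $\hat e$, the full tangential vector trace $\hbw|_{\hat e}\in\bP_{r-1}^{\,\mathrm{tan}}(\hat e;\hF)$ is fixed by the vertex values, the vertex derivatives along $\hat e$, and the moments against $\bP_{r-5}^{\,\mathrm{tan}}$. That is four Hermite conditions plus $(r-4)$ moments per component, i.e. $r$ conditions on a degree-$(r-1)$ polynomial. Hence $\hbw|_{\partial\hF}=0$.

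Then $\curl_{\hF}\hbw\in P_{r-2}(\hF)$. On each edge it vanishes at the endpoints, because the vertex first derivatives vanish, and it has vanishing moments against $P_{r-4}$. Thus $\curl_{\hF}\hbw|_{\partial\hF}=0$, and so $\curl_{\hF}\hbw=\hat\lambda_1\hat\lambda_2\hat\lambda_3\,p$ with $p\in P_{r-5}$.

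Taking $\hat{\boldsymbol\kappa}=\rot_{\hF}(\hat\lambda_1\hat\lambda_2\hat\lambda_3p)$, which is not in $\boldsymbol{\mathcal D}_{r-5}$ in general, does not work directly. Instead I will use the standard N\'ed\'elec-type argument:
\begin{itemize}
\item The vanishing of $(\hbw,\bP^{\tan}_{r-6}+\ldots)$ together with \eqref{eq:surface-Stokes} gives $(\curl_{\hF}\hbw,q)_{\hF}=(\hbw,\rot_{\hF} q)_{\hF}=0$ for $q\in P_{r-5}$, since $\rot_{\hF}P_{r-5}\subset\bP_{r-6}^{\tan}\subset\boldsymbol{\mathcal D}_{r-5}$ and the boundary term vanishes.
\item Choosing $q=p$ forces $\curl_{\hF}\hbw=0$.
\item Therefore $\hbw=\grad_{\hF}\phi$, with $\phi$ vanishing to second order on $\partial\hF$, so $\phi=(\hat\lambda_1\hat\lambda_2\hat\lambda_3)^2 s$, $s\in P_{r-6}$.
\item Testing with $(\hbx-\hbx^c)s'$-type fields, i.e. using $\Div_{\hF}\boldsymbol{\mathcal D}_{r-5}=P_{r-6}$ and integrating by parts, gives $(\phi,\Div_{\hF}\hat{\boldsymbol\kappa})=0$, hence $s=0$.
\end{itemize}
This is the step I expect to require the most care, namely matching the precise polynomial indices. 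If it fails, I will fall back on the unisolvence already proved in \cite{fu2020exact}.

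\emph{Trace compatibility} \eqref{eq:W-face-interpolation}. For each face DOF $\ell$, the functional $\hbw\mapsto\ell(\TrwhF\hbw)$ is a combination of element DOFs of $\hpiw$ by \cite[Lemma~4.11]{fu2020exact}. This holds because tangential derivatives of tangential components at vertices, edge moments of tangential components, and $\curl_{\hF}(\TrwhF\hbw)=\hat\bn_{\hF}\cdot\hat{\curl}\,\hbw$ on edges are all element DOFs. Since $\hpiw$ preserves those DOFs, unisolvence gives the identity, exactly as in Lemma~\ref{lem:trace-interpolation}.

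\emph{Pullback compatibility} \eqref{eq:W-face-affine}. Here $\RFhhw\hat\bg^-=(\Dtau\gamma_F)^\trans(\hat\bg^-\circ\gamma_F)$ with constant $\Dtau\gamma_F$, by \eqref{eq:def-face-w-pull}. Since $\gamma_F$ is affine, this map sends $\bP^{\tan}_{r-1}(\hF^-)$ to $\bP^{\tan}_{r-1}(\hF^+)$. It then suffices to show that each pulled-back DOF $\ell^+(\RFhhw\cdot)$ lies in the span of the DOFs on $\hF^-$, and I will check this group by group:
\begin{itemize}
\item Vertex DOFs map by the chain rule to combinations of vertex DOFs.
\item Edge moments \eqref{eq:W-face-dofs:b} map into edge moments against $\bP^{\tan}_{r-5}$, because $\bP^{\tan}$ is invariant under the constant matrix.
\item The covariant transform satisfies $\curl_{\hF^+}(\RFhhw\hat\bg)=\mu\,(\curl_{\hF^-}\hat\bg)\circ\gamma_F$ with constant $\mu$. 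This is the two-dimensional Piola identity for $2$-forms, so \eqref{eq:W-face-dofs:c} maps to itself after a change of variables.
\item For interior moments, a change of variables gives $(\hbw^-,\,\mu'(\Dtau\gamma_F^{-1})^{\trans}\ldots)$, which pairs against the contravariant Piola image of $\boldsymbol{\mathcal D}_{r-5}$. The Raviart--Thomas space is invariant under affine Piola maps, so this is again a moment against $\boldsymbol{\mathcal D}_{r-5}(\hF^-)$.
\end{itemize}
Finally, unisolvence yields \eqref{eq:W-face-affine}.
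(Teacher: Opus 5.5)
Your proposal is correct and follows the paper's proof essentially step by step: the same unisolvence argument, the same appeal to \cite[Lemma~4.11]{fu2020exact} for trace compatibility, and the same group-by-group check that each pulled-back DOF lies in the span of the DOFs on $\hF^-$. The unisolvence argument goes through vanishing on $\partial\hF$, then eliminates $\curl_{\hF}\hbw=\hat\lambda_1\hat\lambda_2\hat\lambda_3\hat p$ by testing with $\rot_{\hF}\hat p\in\boldsymbol{\mathcal D}_{r-5}(\hF)$, then handles the gradient case via $\Div_{\hF}\boldsymbol{\mathcal D}_{r-5}(\hF)=P_{r-6}(\hF)$. One small slip: the transformed interior test function is $\mu_{\gamma_F}^{-1}\Dtau\gamma_F(\hat{\boldsymbol\kappa}^+\circ\gamma_F^{-1})$, the contravariant Piola image you correctly name, and not an expression built from $(\Dtau\gamma_F^{-1})^{\trans}$; the latter is the covariant map, under which $\boldsymbol{\mathcal D}_{r-5}$ is not invariant in general.
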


\begin{proof}
A direct dimension count shows that the number of DOFs in
\eqref{eq:W-face-dofs} equals
$\dim \bP_{r-1}^{\,\mathrm{tan}}(\hF).$
Suppose that $\hbw\in\bP_{r-1}^{\,\mathrm{tan}}(\hF)$ vanishes on all these DOFs. The vertex and edge DOFs \eqref{eq:W-face-dofs:a}--\eqref{eq:W-face-dofs:b} imply
$\hbw|_{\partial\hF}=0.$
Set $\hat q:=\curl_{\hF}\hbw\in P_{r-2}(\hF)$.
The DOFs \eqref{eq:W-face-dofs:a} and \eqref{eq:W-face-dofs:c} imply $\hat q|_{\partial\hF}=0$, and hence $\hat q=\hat\lambda_1\hat\lambda_2\hat\lambda_3\,\hat p$
for some $\hat p\in P_{r-5}(\hF).$
Since $\rot_{\hF}\hat p\in \boldsymbol{\mathcal D}_{r-5}(\hF)$,
the Stokes formula \eqref{eq:surface-Stokes}, $\hat\bw|_{\partial\hF}=0$, and \eqref{eq:W-face-dofs:d} yield
$$0=(\hbw,\rot_{\hF}\hat p)_{\hF}=(\hat q,\hat p)_{\hF}=(\hat\lambda_1\hat\lambda_2\hat\lambda_3\hat p,\hat p)_{\hF}.$$
Thus $\hat p=\hat q=\curl_{\hF}\hbw=0$.
It follows that $\hbw=\operatorname{grad}_{\hF}\hat{q}_0 $
for some $\hat{q}_0\in P_r(\hF)$. Since $\hbw|_{\partial \hF}=0$, after subtracting a constant, $\hat{q}_0=(\hat\lambda_1\hat\lambda_2\hat\lambda_3)^2\hat{p}_0$
for some $\hat{p}_0\in P_{r-6}(\hF)$. For any $\hat{\boldsymbol\kappa}\in \boldsymbol{\mathcal D}_{r-5}(\hF)$, \eqref{eq:W-face-dofs:d} and integration by parts yield
$$
0=(\operatorname{grad}_{\hF}\hat q_0,\hat{\boldsymbol\kappa})_{\hF}=-(\hat{q}_0,\Div_{\hF}\hat{\boldsymbol\kappa})_{\hF}=-((\hat\lambda_1\hat\lambda_2\hat\lambda_3)^2\hat p_0,\Div_{\hF}\hat{\boldsymbol\kappa})_{\hF}.
$$
Since $\Div_{\hF} \boldsymbol{\mathcal D}_{r-5}(\hF) =P_{r-6}(\hF)$, taking $\Div_{\hF}\hat{\boldsymbol\kappa}=\hat{p}_0$ gives $\hat p_0=\hat q_0=0$, and therefore $\hbw=0$. This proves unisolvence.

For each DOF $\ell$ in \eqref{eq:W-face-dofs}, set $\widetilde\ell(\hbv):=\ell(\TrwhF\hbv)$. Since $\widetilde\ell$ is a linear combination of the DOFs in \cite[Lemma~4.11]{fu2020exact} preserved by $\hpiw$, similar arguments for \eqref{eq:trace-interpolation} apply to \eqref{eq:W-face-interpolation}.  

For a DOF $\ell^+$ in \eqref{eq:W-face-dofs} on $\hF^+$, set $\ell_{\gamma_F}^-(\boldsymbol g):=\ell^+(\RFhhw\boldsymbol g)$. As illustrated in the proof of \eqref{eq:trace-affine-covariance}, it suffices to show that $\ell_{\gamma_F}^-$ lies in the span of the DOFs in \eqref{eq:W-face-dofs} on $\hF^-$. The cases \eqref{eq:W-face-dofs:a}--\eqref{eq:W-face-dofs:b} follow from the chain rule and an affine change of variables, respectively.
For \eqref{eq:W-face-dofs:c}, a direct computation shows that $\curl_{\hF^+}\bigl(\RFhhw\bg\bigr)$ and $\mu_{\gamma_F}(\curl_{\hF^-}\bg)\circ\gamma_F$ agree up to a sign determined by the face orientations. This and $\mu_{\gamma_F}$ being a constant implies that these edge DOFs transform into DOFs of the same type on $\hF^-$.
For \eqref{eq:W-face-dofs:d}, a change of variables gives 
$$(\RFhhw\boldsymbol g,\hat{\boldsymbol\kappa}^+)_{\hF^+}=(\boldsymbol g,\hat{\boldsymbol\kappa}^-)_{\hF^-},\qquad \hat{\boldsymbol\kappa}^-:=\mu_{\gamma_F}^{-1}\Dtau\gamma_F(\hat{\boldsymbol\kappa}^+\circ\gamma_F^{-1}).$$
Since $\gamma_F$ maps the barycenter of $\hF^+$ to that of
$\hF^-$, for every $\hx^-\in\hF^-$, it holds that $\Dtau\gamma_F\bigl(\gamma_F^{-1}(\hx^-)-\hx_{\hF^+}^c\bigr)=\hx^--\hx_{\hF^-}^c$.
Moreover, $\Dtau\gamma_F$ maps tangential vectors on $\hF^+$ to tangential vectors on $\hF^-$. The definition of $\boldsymbol{\mathcal D}_{r-5}$ therefore gives $\hat{\boldsymbol\kappa}^-\in\boldsymbol{\mathcal D}_{r-5}(\hF^-)$.
Therefore, $\ell_{\gamma_F}^-$ lies in the span of the DOFs in \eqref{eq:W-face-dofs} on $\hF^-$, which completes the proof.
\end{proof}

The identities established in Lemma~\ref{lem:W-face-dofs} verify the hypotheses of Theorem~\ref{thm:abstract-pi-conformity}
for $X=\bW$. Therefore, $\piw \bw \in \bH( \curl; \Omega_h)$ for every $\bw \in \bH^3(\Omega)$.

\subsection{Verification of \texorpdfstring{\eqref{eq:abstract-approximation}}{the abstract approximation property}}
Let $\bv \in \bH^k(\Omega)$ with $k \ge 3$, and denote by $\bv$ also its bounded extension to $\mathbb{R}^3$.
In the following, $\Rhkw$ acts on the restriction of this extension to $T \in \calTh$, whereas $\Rhrw$ acts on $\bv|_{\rT}$ with $\rT = \Fkr(T)$.

For each $T\in\calTh$, the inverse relation between
$\Phkw$ and $\Rhkw$, together with the definition of the element interpolation in \eqref{def:local-projectors}, gives $\bv=\Phkw\Rhkw\bv$ and $\piwt\bv=\Phkw\hpiw\Rhrw\bv$ on $T$.
Adding and subtracting $\Phkw\Rhrw\bv$ yields
\begin{equation}\label{eq:sum}
\bv-\piwt\bv = \Phkw(\Rhkw\bv-\Rhrw\bv) + \Phkw(I-\hpiw)\Rhrw\bv.
\end{equation}
The first term measures the geometric error,
and the second is the reference interpolation error
pushed forward to $T$.

For the geometric error, the chain rule with $\Fhr =\Fkr \circ \Fhk$ and the definitions of the covariant transformation in \eqref{eq:defPwandRw} imply $\Phkw\Rhrw\bv = \Rkrw \bv =D\Fkr^\trans(\bv\circ\Fkr)$. Hence,
$$
\Phkw(\Rhkw\bv-\Rhrw\bv) = (I-D\Fkr^\trans)(\bv\circ\Fkr)
+ \bv-\bv\circ\Fkr \quad  \text{ on } T.
$$
By \eqref{eq:psi-estimates-1} with $m=1$ and $m=0$,
respectively, $\|D\Fkr-I\|_{L^\infty(T)} \lesssim h_T^k$ and 
$\|\Fkr-\operatorname{id}_T\|_{L^\infty(T)}
\lesssim h_T^{k+1}$.
Since $|T|^{1/2}\lesssim h_T^{3/2}$ by
\eqref{Prop-FT}, it holds that
$$
\|(I-D\Fkr^\trans)(\bv\circ\Fkr)\|_{L^2(T)}
\leq |T|^{1/2} \|I-D\Fkr^\trans\|_{L^\infty(T)} \|\bv\circ\Fkr\|_{L^\infty(T)} \lesssim
h_T^{k+3/2}\|\bv\|_{L^\infty(\mathbb R^3)}.
$$
The mean-value formula, applied componentwise, leads to $|\bv(x)-\bv(\Fkr(x))|\leq\|D\bv\|_{L^\infty(\mathbb R^3)}|x-\Fkr(x)|$ for all $x\in T$.
Consequently,
$$
\|\bv-\bv\circ\Fkr\|_{L^2(T)}
\leq |T|^{1/2}\|D\bv\|_{L^\infty(\mathbb R^3)} \|\operatorname{id}_T-\Fkr\|_{L^\infty(T)} \lesssim h_T^{k+5/2}\|D\bv\|_{L^\infty(\mathbb R^3)}.
$$
Combining the two bounds, using
$H^3(\mathbb R^3)\hookrightarrow W^{1,\infty}(\mathbb R^3)$
and the stability of the extension, yields
\begin{equation}
\label{eq:sum-1}
\|\Phkw(\Rhkw\bv-\Rhrw\bv)\|_{L^2(T)} \lesssim
h_T^{k+3/2}\|\bv\|_{H^3(\Omega)}.
\end{equation}

For the interpolation term, set $\hbv_T:=\Rhrw\bv = D\Fhr^\trans(\bv\circ\Fhr)$. A scaling argument analogous to that in Lemma~\ref{lem::normH1XThXT} gives
$$
\|\Phkw(I-\hpiw)\Rhrw\bv\|_{L^2(T)}=\|\Phkw(I-\hpiw)\hbv_T\|_{L^2(T)} \lesssim h_T^{1/2} \|\hbv_T-\hpiw\hbv_T\|_{L^2(\hT)}.
$$
Since the DOFs defining $\hpiw$ are bounded on $\bH^3(\hT)$, and $\hpiw$ reproduces $\bP_{{k+1}}(\hT)$ with $k\geq3$, the Bramble--Hilbert lemma yields
$$
\|\hbv_T-\hpiw\hbv_T\|_{L^2(\hT)}\lesssim\inf_{\hbw\in\bP_{k-1}(\hT)}
\|\hbv_T-\hbw\|_{H^3(\hT)} \lesssim |\hbv_T|_{H^k(\hT)}.$$
The pullback estimate in Lemma~\ref{lem::NormInThT} applies also to $\Fhr$ by
\eqref{eq:theta-estimates}.
Thus, Leibniz' rule and
\eqref{eq:theta-estimates-1} give
$$
\begin{aligned}
|\hbv_T|_{H^k(\hT)} &\lesssim
\sum_{j=0}^{k} \|D^{j+1}\Fhr\|_{L^\infty(\hT)} |\bv\circ\Fhr|_{H^{k-j}(\hT)}
\\ &\lesssim \sum_{j=0}^{k} h_T^{j+1}h_T^{k-j-3/2}
\|\bv\|_{H^{k-j}(\rT)} \lesssim h_T^{k-1/2}\|\bv\|_{H^k(\rT)}.
\end{aligned}
$$
This shows that
\begin{equation}\label{eq:sum-2}
\|\Phkw(I-\hpiw)\Rhrw\bv\|_{L^2(T)} \lesssim h_T^k\|\bv\|_{H^k(\rT)}.
\end{equation}

Combining \eqref{eq:sum}--\eqref{eq:sum-2} and summing
the squared local estimates yields
$$
\begin{aligned}
\|\bv-\piw\bv\|_{L^2(\Omega_h)}^2 \lesssim
\sum_{T\in\calTh} \left( h_T^{2k+3}\|\bv\|_{H^3(\Omega)}^2
+ h_T^{2k}\|\bv\|_{H^k(\rT)}^2 \right) \lesssim
h^{2k}\|\bv\|_{H^k(\Omega)}^2.
\end{aligned}
$$
Taking square roots proves
\eqref{eq:abstract-approximation}.

\begin{remark}
The extension $\bv$ in Theorem~\ref{thm:abstract-pressure-robustness} is used only to compare functions defined on $\Omega$ and $\Omega_h$. The discrete load $\bPi_T^W \bft$ uses only $\bm f|_{\rT}$, $\Fhr$, and $D\Fhr$ for any $T \in \calTh$.  Consequently, the construction is directly implementable when the boundary parametrization and the associated mappings $\Fhr$ are available.
\end{remark}





\section{Numerical Experiments}
\label{sec:numerics}
This section presents numerical experiments to validate the theoretical results presented in this paper. The case $k=3$ is considered throughout.
The exact domain is the unit ball
$\Omega=\{(x_1,x_2,x_3)^\trans\in\mathbb{R}^3:~x_1^2+x_2^2+x_3^2<1\}.$
The load $\bft=-\nu\Delta\bu+\nabla p$ with $\nu$ specified later is determined by the exact solution
$$
\bu = \frac{1}{25}\exp(x_1+x_2+x_3)(1-x_1^2-x_2^2-x_3^2)
\begin{pmatrix}
x_3-x_2\\
x_1-x_3\\
x_2-x_1
\end{pmatrix},
\quad
p=20x_1^2(1-x_1)x_2(1-x_2)x_3.
$$
For brevity, the notation $\|\bullet\|_0$ denotes the $L^2(\Omega_h)$ norm in the tables below.

\subsection{Convergence order and divergence-free test}

With $\bft_h=\piw\bft$ and $\nu=1$, Table~\ref{tab:consistency_comparison} compares the numerical results for the corrected scheme \eqref{eq:discrete-stokes} with those for the uncorrected scheme, in which the corrected bilinear form is replaced by $a_h(\bu_h,\bv_h):= \sum_{T\in\calTh}(\nabla\bu_h,\nabla\bv_h)_ T$.
The corrected scheme achieves the theoretically optimal convergence rates:
$$
\|\bu-\bu_h\|_{0}=\mathcal{O}(h^4), \quad \|\bu- \bu_h\|_{1,h}=\mathcal{O}(h^3), \quad \|p-p_h\|_{0}=\mathcal{O}(h^3).
$$
In contrast, the uncorrected scheme exhibits suboptimal convergence rates.
Both schemes remain divergence-free due to the Piola transform used for the velocity space. The consistency correction therefore restores the optimal convergence rates while preserving this property.
\begin{table}[htbp]
\centering
\vspace{-10pt}
\setlength{\tabcolsep}{5pt} {
\begin{tabular}{@{} c *{3}{cc} c @{}}
\toprule
\multicolumn{8}{c}{\text{Corrected scheme}}\\
\midrule
$1/h$& {$\|\bu-\bu_h\|_{0}$} & {rate} 
& {$\|\bu- \bu_h\|_{1,h}$} & {rate} 
& {$\|p-p_h\|_{0}$} & {rate} 
& {$\|\Div \bu_h\|_{0}$}\\
\midrule
4  & 6.6270E$-$05 & ---  & 2.6584E$-$03 & ---  & 1.1780E$-$02 & ---  & 1.0154E$-$15 \\
6  & 1.3747E$-$05 & 3.88 & 8.1759E$-$04 & 2.91 & 3.8461E$-$03 & 2.76 & 1.4076E$-$15 \\
8  & 4.0805E$-$06 & 4.22 & 3.1479E$-$04 & 3.32 & 1.4935E$-$03 & 3.29 & 1.9085E$-$15 \\
10 & 1.6687E$-$06 & 4.01 & 1.6289E$-$04 & 2.95 & 7.9902E$-$04 & 2.80 & 2.3559E$-$15 \\
12 & 8.0549E$-$07 & 3.99 & 9.3143E$-$05 & 3.07 & 4.6860E$-$04 & 2.93 & 2.8049E$-$15 \\
\midrule
\multicolumn{8}{c}{\text{Uncorrected scheme}}\\
\midrule
$1/h$& {$\|\bu-\bu_h\|_{0}$} & {rate} 
& {$\|\bu-\bu_h\|_{1,h}$} & {rate} 
& {$\|p-p_h\|_{0}$} & {rate} 
& {$\|\Div \bu_h\|_{0}$}\\
\midrule
4  & 6.7491E$-$05 & ---  & 2.6452E$-$03 & ---  & 1.1852E$-$02 & ---  & 1.0103E$-$15 \\
6  & 1.4965E$-$05 & 3.72 & 8.3807E$-$04 & 2.83 & 3.9356E$-$03 & 2.72 & 1.4167E$-$15 \\
8  & 5.4751E$-$06 & 3.50 & 3.6631E$-$04 & 2.88 & 1.5981E$-$03 & 3.13 & 1.9178E$-$15 \\
10 & 3.0121E$-$06 & 2.68 & 2.2869E$-$04 & 2.11 & 9.1696E$-$04 & 2.49 & 2.3592E$-$15 \\
12 & 1.9453E$-$06 & 2.40 & 1.6369E$-$04 & 1.83 & 5.7956E$-$04 & 2.52 & 2.8027E$-$15 \\
\bottomrule
\end{tabular}}
\caption{Errors and convergence rates for the scheme with and without consistency corrections.}
\label{tab:consistency_comparison}
\end{table}

\subsection{Pressure-robustness test}
This subsection examines the pressure robustness of the corrected scheme \eqref{eq:discrete-stokes} by varying the viscosity $\nu$ on a fixed mesh with $h=1/4$.
Table~\ref{tab:pressure_robustness} compares the velocity errors when the discrete load $\bft_h$ in \eqref{eq:discrete-stokes} is taken as either $\piw\bft$ or $\boldsymbol{I}_h^{\mathrm{iso}}\bft$, where $\boldsymbol{I}_h^{\mathrm{iso}}\bft$ denotes the cubic nodal Lagrange interpolant of~$\bft$.
The velocity errors for $\bft_h=\piw\bft$ remain nearly unchanged as $\nu$ varies, whereas those for $\bft_h=\boldsymbol{I}_h^{\mathrm{iso}}\bft$ increase approximately like $1/\nu$. This shows that the choice $\bft_h=\piw\bft$ yields a pressure-robust velocity approximation.
\begin{table}[htbp]
\centering
\vspace{-10pt}
\setlength{\tabcolsep}{5pt} {
\begin{tabular}{@{} c cc cc @{}}
\toprule
& \multicolumn{2}{c}{$\bft_h=\piw\bft$}
& \multicolumn{2}{c}{$\bft_h=\boldsymbol{I}_h^{\mathrm{iso}}\bft$} \\
\cmidrule(lr){2-3}\cmidrule(lr){4-5}
$\nu$
& $\|\bu-\bu_h\|_0$
& $\|\bu-\bu_h\|_{1,h}$
& $\|\bu-\bu_h\|_0$
& $\|\bu-\bu_h\|_{1,h}$ \\
\midrule
$10^{-1}$ & 6.6270E$-$05 & 2.6576E$-$03 & 7.2434E$-$05 & 2.6800E$-$03 \\
$10^{-3}$ & 6.6270E$-$05 & 2.6576E$-$03 & 2.8166E$-$03 & 3.3629E$-$02 \\
$10^{-5}$ & 6.6270E$-$05 & 2.6576E$-$03 & 2.8153E$-$01 & 3.3518E$+$00 \\
$10^{-7}$ & 7.7225E$-$05 & 2.7097E$-$03 & 2.8153E$+$01 & 3.3518E$+$02 \\
\bottomrule
\end{tabular}}
\caption{Velocity errors on a fixed mesh with $h=1/4$ for varying $\nu$ and two choices of $\bft_h$ in \eqref{eq:discrete-stokes}.}
\label{tab:pressure_robustness}
\end{table}

\bibliographystyle{siamplain}
\bibliography{evp}
\end{document}